\documentclass[11pt]{amsart}
\usepackage[scale=.76]{geometry}

\usepackage{amsmath,amsthm}
\usepackage{amssymb}
\usepackage[shortlabels]{enumitem}
\usepackage[T1]{fontenc}
\usepackage[colorlinks=true,linkcolor=blue,citecolor=blue,urlcolor=blue]{hyperref}

\usepackage{graphicx} % For images and rotatebox
\usepackage{xcolor}   % For color support

\usepackage[utf8]{inputenc}

\usepackage{enumitem}
\usepackage{booktabs}

\usepackage[all]{xy}

\usepackage[initials, msc-links]{amsrefs}  % alphabetical by author, given names as initials;
\usepackage[capitalise]{cleveref}

\newtheorem{thm}{Theorem}[section]
\newtheorem{lem}[thm]{Lemma}
\newtheorem{prop}[thm]{Proposition}
\newtheorem{cor}[thm]{Corollary}
\newtheorem*{thmA}{Theorem A}
\newtheorem*{thmB}{Theorem B}
\newtheorem*{thmC}{Theorem C}
\newtheorem*{thmD}{Theorem D}
\newtheorem{conj}[thm]{Conjecture}

\theoremstyle{definition}
\newtheorem{defi}[thm]{Definition}
\newtheorem{exam}[thm]{Example}
\newtheorem{rem}[thm]{Remark}

\numberwithin{equation}{section}

\newcommand{\Z}{\mathbb Z}
\newcommand{\Q}{\mathbb Q}
\newcommand{\R}{\mathbb R}
\newcommand{\CC}{\mathbb C}
\newcommand{\bQ}{\overline{\mathbb Q}}
\renewcommand{\P}{\mathbb P}
\newcommand{\w}{\mathfrak q}
\newcommand{\lwh}{\mathfrak s}
\newcommand{\wh}{\mathcal S}
\newcommand{\x}{\mathbf x}
\newcommand{\y}{\mathbf y}
\newcommand{\cc}{\mathbf c}
\newcommand{\aaa}{\mathbf a}
\newcommand{\cO}{\mathcal O}
\newcommand{\nm}[1]{\lVert #1\rVert}
\DeclareMathOperator{\lcm}{lcm}
\DeclareMathOperator{\Proj}{Proj}
\DeclareMathOperator{\ind}{ind}
\DeclareMathOperator{\rad}{rad}

	\title[Roth's theorem on weighted projective lines]{Roth's theorem on weighted projective lines}
	
	\subjclass[2020]{Primary 11J685; Secondary 11J87, 11D41, 11D59, 11G50, 14G05}
	
	\keywords{ Weighted projective spaces, weighted heights, Roth's theorem, integral points, Hall's conjecture, $abc$ conjecture, Thue equations.}
	
	\author{S. Salami}
	\address{Institute of Mathematics, Statistics and Computing, Rio de Janeiro State University, Maracanã, Rio de Janeiro, 20950-000, RJ, Brazil}
	\email{Sajad.salami@ime.uerj.br}

\begin{document}

	\begin{abstract}
		Let ${\mathbb P}^1_{\mathfrak q}$ be the weighted projective line with coprime weights ${\mathfrak q}=(q_0,q_1)$ and $L=q_0q_1$. The weighted Veronese map $\Phi\colon {\mathbb P}^1_{\mathfrak q}\to {\mathbb P}^1$ satisfies $h\circ\Phi=L\,{\mathfrak s}$. Pulling back Roth's theorem gives the optimal exponent $2L$ for the full rational orbit set. For points represented by coprime integral coordinates, counting suggests the smaller exponent $q_0+q_1$. We formulate this as a conjecture and prove an $S$-adic radical form under the $abc$ conjecture. For ${\mathfrak q}=(2,3)$ the resulting inequality at the target $[1:1]$ is the coprime Hall-type inequality, and Danilov--Elkies examples show that the exponent $5$ is sharp. We also prove the expected exponent unconditionally over characteristic-zero function fields for constant targets and polynomial approximants, and for almost every real target. A weighted $abc$ formulation is introduced; in signature $(3,2)$ finitely many coefficient pairs are equivalent to $abc$, and weighted Belyi maps transfer this input to arbitrary algebraic targets. Applications include a pullback form of Schmidt's subspace theorem, weighted Thue inequalities, and a counterexample to a naive weighted rescaling of Roth's index construction.
	\end{abstract}
	
	\maketitle

	\section{Introduction}\label{sec:intro}
	
	Roth's theorem \cite{Roth1955} states that an irrational algebraic number $\alpha$ admits only finitely many rational approximations $p/q$ with $|\alpha-p/q|<q^{-2-\epsilon}$, for every $\epsilon>0$. In terms of heights and Weil functions \cite{HS2000,Vojta2011,BG2006} it asserts, for $\beta\in\P^1(\bQ)$, that
	\[
	\lambda_{\beta,\infty}(y)\le(2+\epsilon)\,h(y)+O(1)\qquad\text{for all but finitely many }y\in\P^1(\Q),
	\]
	and the exponent $2$ is forced by Dirichlet's theorem, since $\P^1(\Q)$ has about $H^2$ points of height at most $H$. Schmidt's subspace theorem \cite{Schmidt1980,Schmidt1991} extends this to hyperplanes in $\P^n$.
	
	Weighted projective spaces $\P^n_{\w}$ occur naturally in moduli problems, for example for binary forms. Heights on weighted projective orbit sets were studied in \cite{BGS2020,SS2023}, and a weighted form of Vojta's conjecture appears in \cite{SSVojta}. Heights on weighted projective stacks have also been introduced in connection with counting problems \cite{Darda2021,ESZB2023,DY2022,NX2025}. In this paper we study Roth-type approximation on the weighted projective line. A basic point, essential throughout, is that over a non-algebraically closed field the weighted orbit set need not coincide with the set of rational points of the coarse variety. We retain the standard notation $\P^n_{\w}(K)$ for the orbit set and state this convention explicitly in Section~\ref{sec:heights}. The arithmetic distinction between the full rational orbit set and the subset represented by coprime integral coordinates is the source of the two different exponents considered below.
	
	Let $\w=(q_0,q_1)$ with $\gcd(q_0,q_1)=1$, put $L=q_0q_1$, and let
	\[
	\Phi\colon\P^1_{\w}\to\P^1,\qquad[x_0:x_1]_{\w}\mapsto[x_0^{q_1}:x_1^{q_0}]
	\]
	be the weighted Veronese map, an isomorphism of varieties. The weighted height $\lwh$ satisfies $h(\Phi(P))=L\,\lwh(P)$ exactly, place by place (Lemma~\ref{lem:veronese}). To a point $\alpha=[\alpha_0:\alpha_1]_{\w}$ we attach the form $F_\alpha=\alpha_1^{q_0}X_0^{q_1}-\alpha_0^{q_1}X_1^{q_0}$ of weighted degree $L$ and the weighted chordal distance $\delta_\infty(P,\alpha)$ (Definition~\ref{def:chordal}). For a pair $\x=(x_0,x_1)$ of integers we write $|\x|_{\w}=\max\{|x_0|^{1/q_0},|x_1|^{1/q_1}\}$.
	
	\begin{thmA}
		Let $\alpha\in\P^1_{\w}(\bQ)$ and $\epsilon>0$. Then
		\[
		-\log\delta_\infty(P,\alpha)\le(2+\epsilon)\,L\,\lwh(P)+O(1)
		\]
		for all $P\in\P^1_{\w}(\Q)$ outside $\Phi^{-1}(Z)$, where $Z$ is finite. If $\Phi(\alpha)$ is real and irrational, the constant $2L$ cannot be replaced by any smaller number. In particular, for every $\w$-reduced pair $\x\in\Z^2$ with $F_\alpha(\x)\neq0$ one has $|F_\alpha(\x)|\ge C\,|\x|_{\w}^{-(1+\epsilon)L}$ with $C=C(\alpha,\w,\epsilon)>0$.
	\end{thmA}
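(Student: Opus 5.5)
The plan is to pull back Roth's theorem along the weighted Veronese map $\Phi$, using the exact height identity $h\circ\Phi=L\,\lwh$ of Lemma~\ref{lem:veronese}.

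\textbf{Step 1: comparison of distances.} I would first compare the weighted chordal distance $\delta_\infty(P,\alpha)$ of Definition~\ref{def:chordal} with the ordinary chordal distance $\delta_\infty(\Phi(P),\Phi(\alpha))$ on $\P^1$. I expect these to agree up to bounded multiplicative constants depending only on $\alpha$ and $\w$. If $\delta_\infty$ is defined through $|F_\alpha(\x)|$ normalised by weighted absolute values, this is essentially built into the definition. Otherwise I would check it on the affine charts $x_0\neq0$ and $x_1\neq0$, where $\Phi$ is given by monomials.

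\textbf{Step 2: the upper bound.} Given Step 1, I apply Roth's theorem in its Weil-function form to $\beta=\Phi(\alpha)\in\P^1(\bQ)$ and $y=\Phi(P)$. Since $\Phi$ is defined over $\Q$, we have $y\in\P^1(\Q)$. Roth's theorem gives $\lambda_{\beta,\infty}(y)\le(2+\epsilon)h(y)+O(1)$ outside a finite set $Z$. Substituting $h(y)=L\,\lwh(P)$ and $\lambda_{\beta,\infty}=-\log\delta_\infty+O(1)$ yields the stated inequality for $P\notin\Phi^{-1}(Z)$.

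One subtlety is that the orbit set $\P^1_{\w}(\Q)$ may map non-injectively, or only into, $\P^1(\Q)$. Since we only need the image to lie in $\P^1(\Q)$, this is harmless. Finiteness is preserved because $\Phi$ is an isomorphism of varieties, so $\Phi^{-1}(Z)$ is finite on geometric points.

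\textbf{Step 3: the integral corollary.} For a $\w$-reduced pair $\x$, I would show that $\lwh(P)=\log|\x|_{\w}+O(1)$ at the archimedean place. This uses the fact that reducedness kills the non-archimedean contributions up to bounded factors. I would then check that $\delta_\infty(P,\alpha)\asymp |F_\alpha(\x)|/\bigl(|\x|_{\w}^{L}\cdot c_\alpha\bigr)$. The bound $-\log\delta\le(2+\epsilon)L\log|\x|_{\w}$ then gives $|F_\alpha(\x)|\gg|\x|_{\w}^{L-(2+\epsilon)L}=|\x|_{\w}^{-(1+\epsilon)L}$. The finitely many exceptional $P$ with $F_\alpha(\x)\ne0$ are absorbed into the constant $C$.

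\textbf{Step 4: optimality, the main obstacle.} For real irrational $\beta=\Phi(\alpha)$, Dirichlet's theorem produces infinitely many $y=[a:b]\in\P^1(\Q)$ with $\lambda_{\beta,\infty}(y)\ge 2h(y)-O(1)$. The difficulty is lifting each such $y$ to $P\in\P^1_{\w}(\Q)$ with $\Phi(P)=y$ and $\lwh(P)=h(y)/L$ exactly. Over $\Q$, with the orbit-set convention, I would take $P=[x_0:x_1]_{\w}$ with $x_0,x_1\in\Q$ chosen so that $x_0^{q_1}/x_1^{q_0}=a/b$. Since $\gcd(q_0,q_1)=1$, choose integers $u,v$ with $uq_1-vq_0=1$ and set $x_0=t^{u}$, $x_1=t^{v}$ with $t=a/b$. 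This makes every rational $y$ an image of a rational orbit point, and Lemma~\ref{lem:veronese} gives the height equality automatically.

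Hence any constant $c<2L$ would be violated by infinitely many $P$. Finally I would remove the finitely many points of $\Phi^{-1}(Z)$, together with the chart issues at $a=0$ or $b=0$, which do not affect an infinite family.
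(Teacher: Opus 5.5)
Your overall route is the paper's. Lemma~\ref{lem:chordal} gives the exact identity $\delta_\infty(P,\alpha)=\delta_\infty(\Phi(P),\Phi(\alpha))$, so Step~1 loses no constants. Together with $h\circ\Phi=L\,\lwh$, this reduces the upper bound to Roth's theorem. Optimality then comes from Dirichlet's theorem and the lift $[t^u:t^v]_{\w}$ of Lemma~\ref{lem:fibres}(a), as in Proposition~\ref{prop:optimal}. Steps~1, 2 and~4 are fine. Step~3, however, contains two false assertions. Both come from treating the orbit set and its reduced representatives as if they behaved like $\P^1(\Q)$.

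\emph{First error.} For a $\w$-reduced pair it is not true that $\lwh(P)=\log|\x|_{\w}+O(1)$. Reducedness only gives $\mu_p(\x)<1$ at each prime. By Lemma~\ref{lem:integral}, the difference $\log|\x|_{\w}-\lwh(P)$ is at least $\frac{1}{\max_i q_i}\log\rad(\gcd(x_0,x_1))$, which is unbounded. For example, take $\w=(2,3)$ and $\x=(g,gt)$ with $g$ squarefree and $\gcd(g,t)=1$; then the difference equals $\frac13\log g$ (compare Remark~\ref{rem:nonintegral}). This discrepancy is exactly what later separates the exponents $2L$ and $q_0+q_1$. Your argument only uses the one-sided bound $\lwh(P)\le\log|\x|_{\w}$. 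That bound holds for every integral representative, because all non-archimedean terms are $\le0$. State that instead.

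\emph{Second error.} The exceptional set is not finite in general, so ``absorbing finitely many exceptions into $C$'' does not work. In the orbit set, $\Phi^{-1}([1:0])=\{[a:0]_{\w}\}$ is infinite when $q_0\ge2$. It contains infinitely many $\w$-reduced pairs $(a,0)$, namely those with $a$ free of $q_0$-th powers (Lemma~\ref{lem:fibres}(c)). The same happens over $[0:1]$ when $q_1\ge2$. Your remark that $\Phi^{-1}(Z)$ is finite on geometric points is beside the point, since Theorem~A concerns orbits. Nothing prevents $Z$ from containing $[1:0]$ or $[0:1]$.

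The repair is the paper's argument in Corollary~\ref{cor:rothQ}. The quantity $\delta_\infty(P,\alpha)$ depends only on $\Phi(P)$. So if $\Phi(P)=z\in Z$ with $z\neq\Phi(\alpha)$, then $\delta_\infty(P,\alpha)=\delta_z>0$ is fixed. Since $|\x|_{\w}\ge1$, this gives $|F_\alpha(\x)|=\delta_z|\aaa|_{\w}^L|\x|_{\w}^L\ge\delta_z|\aaa|_{\w}^L|\x|_{\w}^{-(1+\epsilon)L}$. Alternatively, since $\lwh\ge0$, you may enlarge the $O(1)$ until $Z$ shrinks to $Z\cap\{\Phi(\alpha)\}$. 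After that, no exceptional $\x$ with $F_\alpha(\x)\neq0$ remain.
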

	
	This is Theorem~\ref{thm:wroth}, Proposition~\ref{prop:optimal} and Corollary~\ref{cor:rothQ}; the $S$-adic version over number fields is proved as well. Theorem~A is equivalent to Roth's theorem. The exponent $2L$ is sharp because $\Phi$ maps $\P^1_{\w}(\Q)$ onto $\P^1(\Q)$, so all rational numbers take part.
	
	The picture changes for \emph{integral} points, by which we mean weighted orbits admitting a representative $P=[x_0:x_1]_{\w}$ with coprime integers $x_0,x_1$. Under $\Phi$ these map to fractions whose numerator is a $q_1$-th power and whose denominator is a $q_0$-th power, up to sign. In the orbifold language of Darmon~\cite{DG1995}, their images are the corresponding integral points. There are $\asymp T^{q_0+q_1}$ such weighted orbits with $\lwh(P)\le\log T$, whereas the images of all rational weighted orbits in the same height range number $\asymp T^{2L}$. This counting argument gives a Dirichlet-type \emph{heuristic}, not a theorem, for the exponent $q_0+q_1$ (Remark~\ref{rem:heuristic}). We formulate the resulting inequality as Conjecture~\ref{conj:integral} and prove it under $abc$ for arbitrary algebraic targets.
	
	\begin{thmB}
		Assume the $abc$ conjecture. Let $\alpha\in\P^1_{\w}(\bQ)$ with $\Phi(\alpha)\notin\{[0:1],[1:0]\}$ and let $\epsilon>0$. Then
		\[
		-\log\delta_\infty(P,\alpha)\le(q_0+q_1+\epsilon)\,\lwh(P)+O(1)
		\]
		for all integral points $P$ with $\Phi(P)\neq\Phi(\alpha)$. For $\w=(2,3)$ and $\Phi(\alpha)=[1:1]$ the constant $q_0+q_1=5$ cannot be lowered, even though $2L=12$.
	\end{thmB}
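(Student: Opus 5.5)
Throughout, write $\Phi(\alpha)=[\beta_0:\beta_1]$ with $\beta=\beta_1/\beta_0\in\bQ^\times$ (normalizing $\beta_0=1$), and $P=[x_0:x_1]_{\w}$ with $\gcd(x_0,x_1)=1$. Then $\Phi(P)=[x_0^{q_1}:x_1^{q_0}]$, and since the pair is reduced, $\lwh(P)=\log|\x|_{\w}+O(1)$. Put $H=|\x|_{\w}$, so $|x_0|\le H^{q_0}$, $|x_1|\le H^{q_1}$ and $|x_0^{q_1}|,|x_1^{q_0}|\le H^{L}$.

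The plan is to unwind $\delta_\infty(P,\alpha)$ via Lemma~\ref{lem:veronese} and the chordal distance. If $P$ is close to $\alpha$ then $|x_0^{q_1}|\asymp|x_1^{q_0}|\asymp H^L$. Hence, up to $O(1)$,
\[
-\log\delta_\infty(P,\alpha)\approx -\log\bigl|x_1^{q_0}/x_0^{q_1}-\beta\bigr|.
\]
So it suffices to prove
\[
|x_1^{q_0}-\beta x_0^{q_1}|\gg H^{L-(q_0+q_1)-\epsilon}.
\]

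\textbf{Rational case.} First take $\beta=b/a$ with coprime integers $a,b$. Set $A=a x_1^{q_0}$, $B=-b x_0^{q_1}$ and $C=A+B$. Divide out $g=\gcd(A,B)$, which divides $ab$ and so is bounded. Applying $abc$ to $A/g+B/g=C/g$ gives
\[
H^{L}\ll \max(|A|,|B|)\ll_\epsilon \rad(ABC)^{1+\epsilon}\le \bigl(|ab|\,|x_0x_1|\,|C|\bigr)^{1+\epsilon}\ll \bigl(H^{q_0+q_1}|C|\bigr)^{1+\epsilon}.
\]
This yields $|C|\gg H^{L/(1+\epsilon)-(q_0+q_1)}$, which is the claimed bound after adjusting $\epsilon$. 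The degenerate case $C=0$ is exactly $\Phi(P)=\Phi(\alpha)$, which is excluded.

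\textbf{Algebraic case.} For $\beta$ algebraic I would not use a naive argument. Instead I would invoke the standard Elkies/Belyi route: $abc$ over number fields implies Roth-with-radicals via a Belyi map. Concretely, take a Belyi map $f\colon\P^1\to\P^1$ defined over $\Q$, unramified outside $\{0,1,\infty\}$, with $f(\beta)\in\{0,1,\infty\}$, and apply $abc$ over $\Q$ to $f(\Phi(P))$.

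The Riemann--Hurwitz count (Elkies' lemma) shows that the radical of the numerator, denominator and difference of $f(\Phi(P))$ is bounded by
\[
\rad\le H^{(\deg f-2)\,\cdot\,\text{(height of }\Phi(P))+\text{contributions from }x_0,x_1}\,\times\,\text{(fibres over }0,1,\infty\text{ other than }\beta).
\]
The fibres over $0,1,\infty$ other than $\beta$ contribute only via their radicals. The contribution at $\beta$ is measured by $|\Phi(P)-\beta|^{e_\beta}$, since the points of $\Phi(P)$ at the $q_1$-th and $q_0$-th powers $0,\infty$ are accounted for by $\rad(x_0x_1)\le H^{q_0+q_1}$.

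For the pullback to $[0:1]$ and $[1:0]$, I would compose $\Phi$ with $f$ and apply $abc$ (in the $S$-adic form) to $f(x_1^{q_0}/x_0^{q_1})$. Comparing $(\deg f)L\log H$ against the radical bound would isolate $\log|\Phi(P)-\beta|^{-1}\le (q_0+q_1+\epsilon)\log H$. This is the key and hardest step: bookkeeping the ramification of $f\circ\Phi$ over $0,1,\infty$ so that the only uncancelled exponent is $q_0+q_1$. I would set this up by citing the weighted Belyi transfer advertised in the abstract (signature $(3,2)$), or equivalently by reproducing Elkies' argument on the orbifold $\P^1$ with points $0,\infty$ of multiplicity $q_1,q_0$.

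\textbf{Sharpness for $\w=(2,3)$, $\beta=1$.} Here the inequality reads $|x_1^2-x_0^3|\gg H^{1-\epsilon}$, since $L-(q_0+q_1)=1$. In terms of $|x_0|\asymp H^2$, this is the coprime Hall inequality $|x^3-y^2|\gg |x|^{1/2-\epsilon}$. The Danilov--Elkies infinite family of coprime pairs with $0<|x^3-y^2|\ll|x|^{1/2}$ shows the exponent cannot be improved. Translating back, any constant $c<5$ would force $|x^3-y^2|\gg |x|^{1/2+\eta}$ for some $\eta>0$, which contradicts that family. Finally, $2L=12$ is far from optimal here, since the bound $5$ already holds.
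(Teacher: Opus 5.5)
Your rational-target argument is correct: it is Proposition~\ref{prop:kummer} with $n=1$ combined with Proposition~\ref{prop:abcwabc}. Your sharpness part is also correct; it is Proposition~\ref{prop:danilov}. The gap is the irrational algebraic target, which is the real content of Theorem~B. There you explicitly leave the ``key and hardest step'' undone, and the sketch does not set it up correctly.

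You only require $f(\beta)\in\{0,1,\infty\}$. Unless $f^{-1}(\{0,1,\infty\})$ also contains $[0:1]$ and $[1:0]$, the integers $u=x_0^{q_1}$ and $v=x_1^{q_0}$ are not factors of the $abc$ triple. The saving $\rad(u)\rad(v)\le|x_0x_1|$ is then never used, and Elkies' argument only reproduces Roth's exponent $2L$.

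Your displayed radical bound is not a precise inequality. The claim that $\beta$ contributes through $|\Phi(P)-\beta|^{e_\beta}$ confuses $|A(u,v)|$ with its radical. The factor vanishing at $\beta$ enters the radical bound only to the first power, so ramification indices play no role.

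Citing Theorem~\ref{thm:wbelyi} together with Proposition~\ref{prop:abcwabc} would in fact close the case. It is a detour, though, through a result whose proof rests on the very estimate you are missing.

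The missing step is short and needs neither weighted Belyi maps nor orbifold bookkeeping. Let $G_0\in\Z[U,V]$ be the primitive irreducible form of degree $d$ vanishing at $\Phi(\alpha)$. Since $\Phi(\alpha)\notin\{[0:1],[1:0]\}$, the form $UV\,G_0$ is squarefree of degree $d+2$. Elkies' lemma (Lemma~\ref{lem:radical}: Belyi for the zero set of $UV\,G_0$, then $abc$) gives $\rad(uv\,G_0(u,v))\gg M^{d-\epsilon}$ with $M=\max\{|u|,|v|\}=|\x|_{\w}^L$. This holds for coprime $\x$ with $x_0x_1\neq0$; note $G_0(u,v)\neq0$ because $\Phi(P)\neq\Phi(\alpha)$. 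Because $u$ and $v$ are perfect powers,
\[ \rad\bigl(uv\,G_0(u,v)\bigr)\le|x_0x_1|\,|G_0(u,v)|\le|\x|_{\w}^{q_0+q_1}\,|G_0(u,v)|,\qquad |G_0(u,v)|\ll M^{d}\,\delta_\infty\bigl(\Phi(P),\Phi(\alpha)\bigr). \]
The last bound holds because the chordal distances to the other conjugates are at most $2$. Hence $\delta_\infty(P,\alpha)\gg|\x|_{\w}^{-(q_0+q_1)-L\epsilon}$. Since $\lwh(P)=\log|\x|_{\w}$ on integral points, this is Theorem~B after renaming $\epsilon$. It is exactly the paper's route through Theorem~\ref{thm:abcgeneral} and Lemma~\ref{lem:deduction}.
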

	
	Theorem~\ref{thm:abcgeneral} proves more: a radical form (Conjecture~\ref{conj:radical}), and an $S$-adic version with several targets, valid for all rational points once $\lwh$ is replaced by the integral weighted height $\log|\x|_{\w}$ of a reduced representative. The proof combines Belyi's theorem with $abc$, as in Elkies' deduction of Mordell's conjecture from $abc$~\cite{Elkies1991}. The weights enter only through the inequality $\rad(u)\rad(v)\le|\x|_{\w}^{q_0+q_1}$ for the numerator and denominator of an integral point. For $\w=(2,3)$ and $\Phi(\alpha)=[1:1]$, the resulting inequality is, after the standard normalization of exponents, the coprime Hall-type inequality \cite{Hall1971}; the sharpness statement comes from the Danilov--Elkies construction \cite{Danilov1982,Elkies2000}. Thus a general unconditional proof of Conjecture~\ref{conj:integral} would contain a notoriously difficult special case (Remark~\ref{rem:barrier}). Unconditionally, Proposition~\ref{prop:functionfield} proves the expected exponent without $\epsilon$ over characteristic-zero function fields for constant targets and polynomial approximants, and Proposition~\ref{prop:metric} proves the real inequality for almost every target.
	
	Theorem~B uses $abc$ only for triples in which two terms are perfect powers. We isolate this as a weighted $abc$ conjecture $\mathrm{wABC}(p,q;A,B)$ (Definition~\ref{def:wabc}): the $abc$ inequality for $Ax^p+By^q=c$ in which $x^p$ and $y^q$ contribute $|x|$ and $|y|$ instead of their radicals. It follows from $abc$. For a single signature it suffices for rational targets, and more generally for targets $b$ with $b^n\in\Q$ (Proposition~\ref{prop:kummer}). For general algebraic targets the Belyi map in the proof of Theorem~B destroys the perfect-power structure. To restore it we introduce weighted Belyi maps: Belyi maps whose ramification indices are constant over $0$ and over $1$ (Definition~\ref{def:wbelyi}). The radical form of our conjecture is Conjecture~\ref{conj:radical}.
	
	\begin{thmD}
		\begin{enumerate}[(a)]
			\item For every finite set $S\subset\P^1(\bQ)$ there is a finite morphism $f\colon\P^1_{\Q}\to\P^1_{\Q}$ with the following properties: it is unramified outside $\{0,1,\infty\}$, it satisfies $S\subset f^{-1}(\infty)$, and all its ramification indices over $0$ equal $3$ and all over $1$ equal $2$.
			\item For each form $G$ as in Conjecture~\ref{conj:radical}, the weighted $abc$ conjecture of signature $(3,2)$ implies Conjecture~\ref{conj:radical} for $G$ and every coprime weight vector. Only finitely many coefficient pairs $(4e,-1)$ are needed, and they depend only on $G$.
			\item The $abc$ conjecture is equivalent to $\mathrm{wABC}(3,2;4e,-1)$ for $e\in\{1,3,9,27\}$.
			\item If Conjecture~\ref{conj:radical} holds for $\w=(2,3)$ and the four rational targets $[1:4e]$, $e\in\{1,3,9,27\}$, then it holds for all coprime weight vectors and all algebraic targets.
		\end{enumerate}
	\end{thmD}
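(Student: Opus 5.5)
The plan for Theorem~D is to build the weighted Belyi map of part (a) first, and then deduce (b)--(d) from it in that order.

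For (a), I start from a classical Belyi map $\beta\colon\P^1_{\Q}\to\P^1_{\Q}$ that is unramified outside $\{0,1,\infty\}$ and satisfies $\beta(S)\subset\{0,1,\infty\}$. Belyi's two-step construction (minimal polynomials, then $x\mapsto x^m(1-x)^n/\ldots$) makes $\beta$ defined over $\Q$. I compose with a Möbius map permuting $\{0,1,\infty\}$ so that the image of $S$ lies in $\{0,1,\infty\}$; enlarging $S$ by its full fibres is harmless. The target is then modified by a second map $g\colon\P^1\to\P^1$ with these properties:
\begin{itemize}
\item all points of $\{0,1,\infty\}$ map into $g^{-1}(\infty)$;
\item every ramification index over $0$ equals $3$, and every ramification index over $1$ equals $2$;
\item $g$ is unramified outside $\{0,1,\infty\}$.
\end{itemize}
The natural candidate for $g$ comes from the modular $j$-function. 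The map $X(\ell)\to X(1)$, or a genus-zero congruence cover of $X(1)$ in which all elliptic points are ramified, has indices exactly $3$ over $j=0$, exactly $2$ over $j=1728$, and arbitrary indices over $\infty$. In Belyi normalization this is a dessin with all white vertices of valence $3$ and all black vertices of valence $2$, i.e.\ a trivalent ribbon graph. The problem is then purely combinatorial: find a genus-zero trivalent dessin, defined over $\Q$, whose cusps contain three prescribed rational points $\{0,1,\infty\}$.

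Here I would use the explicit cubic $j=x^3$-type pullback composed with $1-j$-type square maps: for instance $j(t)=\frac{(t+3)^3(t+27)}{t}$ from $X_0(3)$, or more simply the level-$2$/level-$3$ genus-zero covers, adjusting by Möbius maps over $\Q$. Since $f=g\circ\beta$, the indices over $0$ and over $1$ stay $3$ and $2$ only if $\beta$ is unramified over $g^{-1}(\{0,1\})$. That fails in general, so the composition must be arranged the other way round: $f=\beta'\circ\ldots$ does not work either. The fix I would try is to choose $g$ first and then apply Belyi to $S\cup g^{-1}(\{0,1,\infty\})$, mapping everything to $\infty$ by a map that is étale over the cusp-free part. \textbf{This step is the main obstacle}: controlling ramification so that every index over $0$ is exactly $3$ and every index over $1$ is exactly $2$. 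My fallback is to pass to the orbifold $\P^1$ with signature $(3,2,\infty)$, i.e.\ $\mathrm{PSL}_2(\Z)$. A finite-index subgroup of $\mathrm{PSL}_2(\Z)$ that is torsion-free, or at least has no elliptic elements, gives exactly such $f$. The covers unramified outside three points with indices dividing $(3,2,\infty)$ correspond to such subgroups, and one with prescribed cusp set exists by pulling back $\beta$ along $j$ and taking a torsion-free finite-index subgroup, e.g.\ $\Gamma(2)\cap\Gamma(3)$-type intersections with $\beta^*$. I still need to check genus zero and rationality over $\Q$; if needed, I would take the Galois closure and then a rational quotient.

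For (b), take $P$ integral and apply $f$ to $\Phi(P)=[u:v]$ with $u,v$ perfect powers. Near the target, $f^{-1}(\infty)$ controls $\delta_\infty$ exactly as in the proof of Theorem~\ref{thm:abcgeneral}. The key point is that constant ramification writes $f=\frac{A(t)^3}{C(t)}$ and $f-1=\frac{B(t)^2}{C(t)}$, up to constants. Clearing denominators gives $4e\,a^3-b^2=c$, with $a,b$ integral forms in $u,v$ and $c=$ the homogenized $C$, which carries the approximation. Applying $\mathrm{wABC}(3,2;4e,-1)$, the perfect-power terms contribute $|a|,|b|$ rather than radicals, and the degree count matches the Belyi degree argument of Theorem~\ref{thm:abcgeneral}. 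The finitely many $e$ arise from the leading constants, i.e.\ from the square classes needed to make the constants rational.

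For (c), $abc\Rightarrow\mathrm{wABC}$ is immediate. Conversely, feed Elkies' argument through (b): with the $\w=(2,3)$ target $[1:1]$ structure, the general $abc$ triple is recovered via the specific modular cover $j$. The explicit $e\in\{1,3,9,27\}$ would come from the $X_0(3)$-type form $j=(t+27)(t+3)^3/t$, whose twists give constants $4\cdot3^k$.

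Part (d) follows by combining (c) with the observation that $\mathrm{wABC}(3,2;4e,-1)$ is Conjecture~\ref{conj:radical} at target $[1:4e]$ for $\w=(2,3)$. This is the same rewriting as Proposition~\ref{prop:kummer}.
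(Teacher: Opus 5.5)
\textbf{Part (a).} You found the right construction and then rejected it on a false ground, so as written (a) is not proved. If $g$ sends $0,1,\infty$ to $\infty$, then $g^{-1}(\{0,1\})$ is disjoint from $\{0,1,\infty\}$. Every ramification point of the Belyi map $\beta$ lies over $\{0,1,\infty\}$, so $\beta$ is unramified at every point over $g^{-1}(\{0,1\})$. Three consequences follow:
\begin{itemize}
\item $e_z(g\circ\beta)=e_{\beta(z)}(g)$ for all $z$ over $0$ or $1$;
\item the branch locus of $g\circ\beta$ lies in $g(\{0,1,\infty\})\cup\{0,1,\infty\}=\{0,1,\infty\}$;
\item $S\subset\beta^{-1}(\{0,1,\infty\})\subset(g\circ\beta)^{-1}(\infty)$.
\end{itemize}
So there is no obstacle, and you only need one explicit $g$ over $\Q$. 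The paper takes the level-$2$ map $J(\lambda)=4(\lambda^2-\lambda+1)^3/(27\lambda^2(\lambda-1)^2)$, i.e.\ $j/1728$ in the Legendre parameter. It is checked via the identity $4(\lambda^2-\lambda+1)^3-\bigl((\lambda+1)(\lambda-2)(2\lambda-1)\bigr)^2=27\lambda^2(\lambda-1)^2$ together with Riemann--Hurwitz. Your fallback through torsion-free subgroups of $\mathrm{PSL}_2(\Z)$ leaves genus and field of definition unchecked. Your explicit candidate $(t+3)^3(t+27)/t$ from $X_0(3)$ does not qualify, because $t=-27$ has ramification index $1$ over $j=0$.

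\textbf{Parts (b)--(d).} The second gap is the converse in (c), on which your (d) depends. You do not actually argue it, and the set $\{1,3,9,27\}$ has nothing to do with $X_0(3)$ or twists. Take the homogenized identity $4X^3-Y^2=27W$ with $g=\mathrm{id}$ and evaluate it at $(a,c)$ for a coprime triple $a+b=c$. Then $\tilde x=a^2-ac+c^2$ and $\tilde y=(a+c)(a-2c)(2a-c)$ satisfy $4\tilde x^3-\tilde y^2=27a^2b^2c^2$. Since $\mathrm{wABC}$ requires coprime inputs, you must divide by $e=\gcd(\tilde x,\tilde y)$. Here $e\mid27$, because the resultant of $U^2-UV+V^2$ and $(U+V)(U-2V)(2U-V)$ is $\pm27$. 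This gives coprime $x',y'$ with $4ex'^3-y'^2=27(abc)^2/e^2$. Using $\tilde x\ge M^2/8$ and $|\tilde y|\le18M^3$, $\mathrm{wABC}(3,2;4e,-1)$ yields $M^6\ll(M^5\rad(abc))^{1+\epsilon}$.

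The same gcd step is what makes the coefficient set in (b) finite: there $e\mid27h^3$, with $h=\gcd(G_1(u,v),G_2(u,v))$ dividing a resultant. It does not come from square classes of leading constants.

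Finally, in (d), $\mathrm{wABC}(3,2;4e,-1)$ is not a rewriting of Conjecture~\ref{conj:radical} at $[1:4e]$. Proposition~\ref{prop:kummer} gives only $\mathrm{wABC}\Rightarrow$ radical form, via $|x_0x_1|\le|\x|_{\w}^5$. The reverse fails: for $x_1=1$, wABC demands $\rad(4ex_0^3-1)\gg|x_0|^{2-\epsilon}$, while the radical form gives only $|x_0|^{1/2-\epsilon}$. What works is to apply Conjecture~\ref{conj:radical} directly to the pairs $(x',y')$ above. For these, $|(x',y')|_{\w}\ge(\tilde x/27)^{1/2}\gg M$, so $3\rad(abc)\ge\rad(4ex'^3-y'^2)\gg M^{1-\epsilon}$, which is $abc$. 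Theorem~\ref{thm:abcgeneral} then supplies all weights and targets.
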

	
	Part (a) is Theorem~\ref{thm:wbelyiexists}: one may take a Belyi map followed by $j/1728$, written in the Legendre parameter. Parts (b), (c) and (d) are Theorem~\ref{thm:wbelyi}, Proposition~\ref{prop:equiv} and Corollary~\ref{cor:universal}. Parts (b) and (c) together show that weighted Belyi maps do not lower the strength of the hypothesis. What they show precisely is that four radical inequalities in signature $(2,3)$ imply $abc$, and therefore imply the radical and integral conjectures for all coprime weight vectors through Theorem~\ref{thm:abcgeneral}.
	
	In Section~\ref{sec:subspace} we transfer Schmidt's subspace theorem to diagonal hypersurfaces $\sum_ic_ix_i^{L/q_i}=0$ in $\P^n_{\w}$. Section~\ref{sec:thue} treats weighted homogeneous binary forms, each of which has the shape $F=X^aY^bG(X^{q_1},Y^{q_0})$ with $G$ a binary form of degree $\kappa$ (Lemma~\ref{lem:structure}).
	
	\begin{thmC}
		Let $F$ be as above with coefficients in $\bQ$, and assume $G$ has no repeated factors. For every $\epsilon>0$ there is $C>0$ with $|F(x,y)|\ge C\,|(x,y)|_{\w}^{L(\kappa-2-\epsilon)}$ for all coprime $(x,y)\in\Z^2$ with $F(x,y)\neq0$. If $\kappa\ge3$, then $F(x,y)=m$ has only finitely many integral solutions for every $m\neq0$.
	\end{thmC}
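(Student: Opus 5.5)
The plan is to reduce Theorem~C to the classical Thue--Roth inequality for the binary form $G$ of degree $\kappa$, using the structure lemma (Lemma~\ref{lem:structure}) and the weighted Veronese substitution. Write $F(x,y)=x^ay^bG(x^{q_1},y^{q_0})$ and put $u=x^{q_1}$, $v=y^{q_0}$. Then $\max\{|u|,|v|\}=|(x,y)|_{\w}^{L}$, and for coprime $(x,y)$ the pair $(u,v)$ is coprime as well. Since $F(x,y)\neq0$, we have $x,y\neq0$ whenever $a,b>0$, so $|x^ay^b|\ge1$ and hence $|F(x,y)|\ge|G(u,v)|$. The theorem therefore reduces to the bound $|G(u,v)|\ge C\max\{|u|,|v|\}^{\kappa-2-\epsilon}$ for coprime integer pairs $(u,v)$ with $G(u,v)\neq0$. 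I will prove this bound for all coprime pairs, which is more than is needed for the special pairs of the form $(x^{q_1},y^{q_0})$.

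For the bound on $G$ I would use the standard argument. Factor $G=c\prod_{i=1}^{\kappa}(X-\beta_iY)$ over $\bQ$, after first applying a linear change of variables in $\mathrm{GL}_2(\Z)$ so that $Y\nmid G$ and no $\beta_i$ is infinite. This change distorts $\max\{|u|,|v|\}$ only by a bounded factor. Because $G$ has no repeated factors, the $\beta_i$ are distinct, so we may fix $\eta=\tfrac12\min_{i\neq j}|\beta_i-\beta_j|>0$.

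Take a coprime pair with $G(u,v)\neq0$. If $|v|$ is small compared with $|u|$, then every factor satisfies $|u-\beta_iv|\gg|u|$, which gives $|G(u,v)|\gg\max\{|u|,|v|\}^\kappa$. Otherwise $|u|\ll|v|$, and at most one index $i$ can have $|u/v-\beta_i|<\eta$. All other factors are then $\gg|v|$. For the remaining factor I would apply Roth's theorem, in the form of Theorem~A with trivial weights or simply Roth~\cite{Roth1955}. If $\beta_i$ is irrational this gives $|u-\beta_iv|\gg|v|^{-1-\epsilon}$. If $\beta_i$ is rational, then $u-\beta_iv\neq0$ and has bounded denominator, so $|u-\beta_iv|\gg1$. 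If $\beta_i$ is nonreal, then $|u-\beta_iv|\ge|\operatorname{Im}\beta_i|\,|v|$. In every case the product satisfies $|G(u,v)|\gg|v|^{\kappa-1}|v|^{-1-\epsilon}=|v|^{\kappa-2-\epsilon}$. Substituting $\max\{|u|,|v|\}=|(x,y)|_{\w}^L$ then yields $|F(x,y)|\ge C|(x,y)|_{\w}^{L(\kappa-2-\epsilon)}$. The coefficients of $G$ lie in $\bQ$ rather than $\Q$, but the argument goes through unchanged, because only the archimedean absolute values of the roots are used.

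For finiteness when $\kappa\ge3$, take $\epsilon=\tfrac12$. Then $|m|=|F(x,y)|\ge C|(x,y)|_{\w}^{L/2}$, which bounds $|(x,y)|_{\w}$ and hence leaves only finitely many coprime solutions. For non-coprime solutions, write $d=\gcd(x,y)$. The main obstacle lies here, since $F$ is only weighted homogeneous and $d$ does not factor out cleanly. I would handle it by noting that $d^{\,a+b}$ divides $x^ay^b$ and that $d$ divides each monomial of $G(x^{q_1},y^{q_0})$ to total order at least $\kappa\min(q_0,q_1)$. Hence $d^{\kappa\min(q_0,q_1)}\mid m$, so $d$ takes only finitely many values. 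For each such $d$, write $x=dx'$ and $y=dy'$. The equation becomes $F_d(x',y')=m$, where $F_d$ is the form obtained from $F$ by rescaling each coefficient by a power of $d$. This $F_d$ is again of the shape $X^aY^bG_d(X^{q_1},Y^{q_0})$ with $G_d$ squarefree of degree $\kappa$. Applying the coprime case to each $F_d$ finishes the proof. If this rescaling bookkeeping turns out to be awkward, the fallback is to interpret "integral solutions" as primitive ones, in line with the paper's convention for integral points.
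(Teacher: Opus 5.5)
Your proof of the inequality is essentially the paper's. You substitute $u=x^{q_1}$, $v=y^{q_0}$, use $|x^ay^b|\ge1$ and $\max\{|u|,|v|\}=|(x,y)|_{\w}^L$, and apply the Thue--Siegel--Roth bound for $G$ (Lemma~\ref{lem:thueroth}). Your affine case analysis (real irrational, rational or nonreal root) is the classical version of the paper's chordal-distance argument, and it is valid for coefficients in $\bQ$.

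The finiteness step for non-coprime solutions has a gap as written. The claim $d^{\kappa\min(q_0,q_1)}\mid m$ presupposes integer coefficients, but the theorem allows coefficients in $\bQ$. It fails even over $\Q$: for $\w=(1,1)$ and $F=\tfrac18(X^3-2Y^3)$ one has $F(2,0)=1$ with $d=2$.

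The repair is routine. Take a number field $K$ containing the coefficients and $N\in\Z_{>0}$ with $NF\in\cO_K[X,Y]$. A solution forces $m\in K$ and $d^{\kappa\min(q_0,q_1)}\mid Nm$ in $\cO_K$, and taking norms bounds $d$. Your rescaling then works, since $F(dX,dY)=X^aY^bG_d(X^{q_1},Y^{q_0})$ with $G_d(U,V)=d^{a+b}G(d^{q_1}U,d^{q_0}V)$, which is still squarefree of degree $\kappa$.

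The paper avoids both the divisibility and the family $F_d$. It takes $e=\gcd(u,v)$ at the level of $u=x^{q_1}$, $v=y^{q_0}$, where $G$ is genuinely homogeneous, so $G(u,v)=e^\kappa G(u',v')$. Then $|m|\ge e^\kappa|G(u',v')|\ge C e^\kappa\max\{|u'|,|v'|\}^{\kappa-5/2}$ bounds $e$, $u'$ and $v'$ at once. This is a purely archimedean estimate with a single constant, so it needs no integrality of the coefficients.

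Finally, your fallback of counting only primitive solutions would prove strictly less than the statement, which asserts finiteness of all solutions in $\Z^2$.
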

	
	The hypothesis $\kappa\ge3$ cannot be relaxed to $\kappa\ge2$ in general (Remark~\ref{rem:sharpthue}). Under $abc$, finiteness holds as soon as $\kappa L>q_0+q_1$ (Corollary~\ref{cor:abcthue}). This covers the equations $x^p-y^q=m$ with coprime $p,q\ge2$, whose finiteness is known unconditionally by Siegel's theorem.
	
	Finally, Section~\ref{sec:direct} gives a counterexample to the most naive weighted rescaling of the index condition used in Roth's proof (Proposition~\ref{prop:index}). This rules out that direct rescaling, but not other anisotropic auxiliary-polynomial constructions.
	
	\subsection*{Conventions}
	Throughout, $k$ is a number field and $M_k$ its set of places. For $v\in M_k$ above $p\in M_{\Q}=\{\infty,2,3,5,\dots\}$ we put $\nm{x}_v=|N_{k_v/\Q_p}(x)|_p$, so that $\nm{x}_v=|x|_v^{n_v}$ with $n_v=[k_v:\Q_p]$, and the product formula $\prod_{v\in M_k}\nm{x}_v=1$ holds for $x\in k^*$. We set $N_v=n_v$ if $v$ is archimedean and $N_v=0$ otherwise. Whenever algebraic points enter at a place $v$, we fix an extension of $|\cdot|_v$ to $\bQ$ and extend $\nm{\cdot}_v=|\cdot|_v^{n_v}$ accordingly; at the archimedean place of $\Q$ this amounts to fixing an embedding $\bQ\subset\CC$. For $y=[y_0:\dots:y_n]\in\P^n(k)$ we write $h_k(y)=\sum_{v\in M_k}\log\max_i\nm{y_i}_v$ and $h=h_k/[k:\Q]$. The $abc$ conjecture \cite{Oesterle1988}, \cite[Ch.~12]{BG2006} asserts that for every $\epsilon>0$ there is $K_\epsilon$ such that $\max\{|a|,|b|,|c|\}\le K_\epsilon\rad(abc)^{1+\epsilon}$ for all coprime integers $a,b,c$ with $a+b=c$.
	
	\section{Weighted projective spaces and weighted heights}\label{sec:heights}
	
	\subsection{Weighted projective space}
	Let $\w=(q_0,\dots,q_n)$ be a vector of positive integers, called weights, and put $L=\lcm(q_0,\dots,q_n)$ and $r_i=L/q_i$. For a field~$K$, the group $K^*$ acts on $K^{n+1}\setminus\{0\}$ by
	\[
	\lambda\ast\x=(\lambda^{q_0}x_0,\dots,\lambda^{q_n}x_n),
	\]
	and we write $\P^n_{\w}(K)=(K^{n+1}\setminus\{0\})/K^*$, denoting the orbit of $\x$ by $[\x]_{\w}$. \emph{Throughout the paper this notation denotes the weighted orbit set, not the set of $K$-rational points of the coarse scheme.} This is the convention used in \cite{BGS2020,SS2023}. The weighted projective space as a variety is $\P^n_{\w}=\Proj k[x_0,\dots,x_n]$ with $\deg x_i=q_i$ \cite{Dolgachev1982,BR1986}. Each weighted orbit determines a $K$-point of the coarse variety, and the resulting map is bijective when $K$ is algebraically closed. Over a general field distinct weighted orbits can determine the same coarse point; see Lemma~\ref{lem:fibres}(c).
	
	The \emph{weighted Veronese map} $\Phi\colon\P^n_{\w}(K)\to\P^n(K)$, $\Phi([\x]_{\w})=[x_0^{r_0}:\dots:x_n^{r_n}]$, is well defined because $(\lambda^{q_i}x_i)^{r_i}=\lambda^Lx_i^{r_i}$. Geometrically, $\Phi$ is the finite surjective morphism $\P^n_{\w}\to\P^n$ given by the sections $x_i^{r_i}$ of the invertible sheaf $\cO_{\P^n_{\w}}(L)$. For $n=1$, a monomial $x_0^ix_1^j$ of weighted degree divisible by $L$ satisfies $r_0\mid i$ and $r_1\mid j$. Hence the $L$-th Veronese subring of $k[x_0,x_1]$ is $k[x_0^{r_0},x_1^{r_1}]$, and $\Phi$ is an isomorphism $\P^1_{\w}\cong\P^1$.
	
	\subsection{Weighted heights}
	Let $K$ be a number field. For $\x\in K^{n+1}\setminus\{0\}$ and $v\in M_K$ put $\nm{\x}_{v,\w}=\max_i\nm{x_i}_v^{1/q_i}$. Since $\nm{\lambda\ast\x}_{v,\w}=\nm{\lambda}_v\nm{\x}_{v,\w}$, the product formula shows that
	\[
	\lwh_K(P)=\sum_{v\in M_K}\log\nm{\x}_{v,\w},\qquad P=[\x]_{\w}\in\P^n_{\w}(K),
	\]
	does not depend on $\x$. We write $\wh_K=\exp\lwh_K$ and $\lwh=\lwh_K/[K:\Q]$ for the absolute weighted height, which does not depend on $K$. This is the weighted height of \cite{BGS2020,SS2023}.
	
	\begin{lem}\label{lem:veronese}
		For $\x\in K^{n+1}\setminus\{0\}$ and $v\in M_K$ we have $\max_i\nm{x_i^{r_i}}_v=\nm{\x}_{v,\w}^L$. Consequently $h_K(\Phi(P))=L\,\lwh_K(P)$ and $h(\Phi(P))=L\,\lwh(P)$ for all $P\in\P^n_{\w}(K)$; in particular $\lwh\ge0$.
	\end{lem}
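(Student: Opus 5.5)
The plan is to verify the local identity place by place and then sum over places. The only input is that $\nm{\cdot}_v$ is multiplicative, so $\nm{x_i^{r_i}}_v=\nm{x_i}_v^{r_i}$. Because $r_i=L/q_i$, this can be rewritten as
\[
\nm{x_i^{r_i}}_v=\bigl(\nm{x_i}_v^{1/q_i}\bigr)^{L}.
\]
The function $t\mapsto t^L$ is increasing on $[0,\infty)$, so it commutes with taking a maximum over $i$. Therefore $\max_i\nm{x_i^{r_i}}_v=\bigl(\max_i\nm{x_i}_v^{1/q_i}\bigr)^L=\nm{\x}_{v,\w}^L$, which is the first assertion. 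The only point needing care is that at archimedean places $\nm{\cdot}_v=|\cdot|_v^{n_v}$ is not a norm in the triangle-inequality sense. This does not matter here: only multiplicativity is used, and no triangle inequality enters.

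For the height identity, take $P=[\x]_{\w}\in\P^n_{\w}(K)$ with $\x\in K^{n+1}\setminus\{0\}$. The vector $(x_0^{r_0},\dots,x_n^{r_n})$ is a nonzero representative of $\Phi(P)$ in $K^{n+1}$; it is nonzero because some $x_i\neq0$. By the definition of $h_K$ and the local identity,
\[
h_K(\Phi(P))=\sum_{v\in M_K}\log\max_i\nm{x_i^{r_i}}_v=\sum_{v\in M_K}L\log\nm{\x}_{v,\w}=L\,\lwh_K(P).
\]
The sum is finite, since almost all terms vanish; this holds because for all but finitely many $v$ every nonzero coordinate has $\nm{x_i}_v=1$. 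Dividing by $[K:\Q]$ gives $h(\Phi(P))=L\,\lwh(P)$.

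Finally, nonnegativity follows from the standard fact that $h\ge0$ on $\P^n(K)$. To see this, normalize a representative so that one coordinate equals $1$. Then each local maximum is at least $1$, so $h_K(\Phi(P))\ge0$. Since $L>0$, this gives $\lwh(P)\ge0$. I do not expect any genuine obstacle. The one thing to double-check is that well-definedness of $\lwh_K$, already noted in the text, is compatible with the choice of representative of $\Phi(P)$. It is: both sides are independent of the representative, so computing with the single representative $\x$ suffices.
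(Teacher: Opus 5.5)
Your proposal is correct and follows the paper's argument: the local identity comes from $q_ir_i=L$ and the monotonicity of $t\mapsto t^L$, and the height identity comes from summing over places. Your extra justification of $\lwh\ge0$, using a representative with one coordinate equal to $1$, fills in a step the paper leaves implicit.
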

	
	\begin{proof}
		Since $q_ir_i=L$, we have $\nm{x_i^{r_i}}_v=(\nm{x_i}_v^{1/q_i})^L$, and $t\mapsto t^L$ is increasing on $[0,\infty)$. Summing over $v$ gives the rest.
	\end{proof}
	
	Thus $\lwh$ is $1/L$ times the height attached to $\cO_{\P^n_{\w}}(L)=\Phi^*\cO_{\P^n}(1)$ with the pulled-back standard metric.
	
	\begin{lem}\label{lem:fibres}
		Let $n=1$, assume $\gcd(q_0,q_1)=1$, and let $K$ be a field.
		\begin{enumerate}[(a)]
			\item $\Phi\colon\P^1_{\w}(K)\to\P^1(K)$ is surjective.
			\item If $x_0x_1\neq0$ and $\Phi([\x]_{\w})=\Phi([\y]_{\w})$, then $[\x]_{\w}=[\y]_{\w}$.
			\item $\Phi^{-1}([1:0])=\{[a:0]_{\w}:a\in K^*\}$, and $[a:0]_{\w}=[a':0]_{\w}$ if and only if $a'/a\in K^{*q_0}$; similarly for $[0:1]$. If $K$ is a number field, all these points have weighted height $0$.
		\end{enumerate}
	\end{lem}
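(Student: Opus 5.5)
The whole lemma comes from one observation: since $\gcd(q_0,q_1)=1$, there are integers $a,b$ with $aq_0+bq_1=1$. This lets me build a scaling $\lambda$ out of the coordinates themselves. Here $r_0=q_1$ and $r_1=q_0$, so $\Phi([x_0:x_1]_{\w})=[x_0^{q_1}:x_1^{q_0}]$.

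For (a), I take $[y_0:y_1]\in\P^1(K)$. If $y_1=0$, the point $[1:0]_{\w}$ maps to $[1:0]$, and similarly $[0:1]_{\w}$ maps to $[0:1]$ when $y_0=0$. If $y_0y_1\neq0$, I look for $\x$ with $x_0^{q_1}=\mu y_0$ and $x_1^{q_0}=\mu y_1$ for some $\mu\in K^*$. Put $t=y_0/y_1$, $x_0=t^{b}$ and $x_1=t^{-a}$. Then
\[
x_0^{q_1}/x_1^{q_0}=t^{bq_1+aq_0}=t,
\]
so $\Phi([x_0:x_1]_{\w})=[t:1]=[y_0:y_1]$. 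This settles surjectivity.

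For (b), the hypothesis gives $\mu\in K^*$ with $y_0^{q_1}=\mu x_0^{q_1}$ and $y_1^{q_0}=\mu x_1^{q_0}$. The candidate scaling is $\lambda=(y_0/x_0)^{a}(y_1/x_1)^{b}$. I then compute
\[
\lambda^{q_0}=(y_0/x_0)^{aq_0}(y_1/x_1)^{bq_0}.
\]
Since $(y_1/x_1)^{q_0}=\mu=(y_0/x_0)^{q_1}$, the second factor equals $(y_0/x_0)^{bq_1}$. Hence $\lambda^{q_0}=(y_0/x_0)^{aq_0+bq_1}=y_0/x_0$. The same computation with the roles swapped gives $\lambda^{q_1}=y_1/x_1$. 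Therefore $\y=\lambda\ast\x$, which is the claim. The only point needing care is that every exponent is an integer, so no roots are extracted; this is where coprimality is used.

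For (c), I first note that $\Phi([x_0:x_1]_{\w})=[1:0]$ forces $x_1^{q_0}=0$, so $x_1=0$ and $x_0=a\neq0$. Next, $[a:0]_{\w}=[a':0]_{\w}$ holds exactly when $a'=\lambda^{q_0}a$ for some $\lambda\in K^*$, that is, when $a'/a\in K^{*q_0}$. The same argument handles $[0:1]$ with $q_1$. For the height, Lemma~\ref{lem:veronese} gives $\lwh([a:0]_{\w})=h([a^{q_1}:0])/L=h([1:0])/L=0$. Alternatively, one can sum $\log\nm{a}_v^{1/q_0}$ over all places and apply the product formula.

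I expect no real obstacle here. The one nontrivial step is the Bezout construction of $\lambda$ in (b) and of the preimage in (a).
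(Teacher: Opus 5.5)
Your proof is correct and follows the paper's argument. In (a) your preimage $[t^b:t^{-a}]_{\w}$ is the paper's $[t^u:t^v]_{\w}$ with $uq_1-vq_0=1$, and in (b) you use the same B\'ezout scaling $\lambda=t_0^at_1^b$, $t_i=y_i/x_i$. For the height in (c) the paper applies the product formula directly, which you list as your alternative; deducing it from Lemma~\ref{lem:veronese} via $\Phi([a:0]_{\w})=[1:0]$ is equally valid.
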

	
	\begin{proof}
		(a) The points $[1:0]$, $[0:1]$ are images of $[1:0]_{\w}$, $[0:1]_{\w}$. For $t\in K^*$ choose $u,v\in\Z$ with $uq_1-vq_0=1$; then $\Phi([t^u:t^v]_{\w})=[t^{uq_1}:t^{vq_0}]=[t:1]$.
		(b) Write $y_i=t_ix_i$; the hypothesis gives $t_0^{q_1}=t_1^{q_0}$. With $a,b\in\Z$ such that $aq_0+bq_1=1$ and $\lambda=t_0^at_1^b$ we get $\lambda^{q_0}=t_0^{aq_0+bq_1}=t_0$ and $\lambda^{q_1}=t_1^{aq_0+bq_1}=t_1$, so $\y=\lambda\ast\x$.
		(c) The first assertions are immediate, and $\lwh_K([a:0]_{\w})=q_0^{-1}\sum_v\log\nm{a}_v=0$.
	\end{proof}
	
	\begin{rem}\label{rem:northcott}
		If $q_0\ge2$, then $\P^1_{\w}(\Q)$ contains the infinitely many points $[a:0]_{\w}$, $a\in\Q^*/\Q^{*q_0}$, all of height $0$. They define a single point of the variety $\P^1_{\w}$. Northcott's theorem fails for $\P^1_{\w}(\Q)$ but holds for its image under $\Phi$, which is why exceptional sets below have the form $\Phi^{-1}(Z)$ with $Z$ finite.
	\end{rem}
	
	\subsection{Integral representatives}
	Let $k=\Q$. A vector $\x\in\Z^{n+1}\setminus\{0\}$ is \emph{$\w$-reduced} if no prime $p$ satisfies $p^{q_i}\mid x_i$ for all $i$; we write $|\x|_{\w}=\max_i|x_i|^{1/q_i}$ and $v_p$ for the $p$-adic valuation.
	
	\begin{lem}\label{lem:integral}
		Every $P\in\P^n_{\w}(\Q)$ has a $\w$-reduced representative. For every representative $\x\in\Z^{n+1}$ of $P$,
		\[
		\lwh(P)=\log|\x|_{\w}-\sum_p\mu_p(\x)\log p,\qquad \mu_p(\x)=\min_i\frac{v_p(x_i)}{q_i}.
		\]
		If $\x$ is $\w$-reduced and $g=\gcd(x_0,\dots,x_n)$, then
		\[
		\frac{1}{\max_iq_i}\log\rad(g)\ \le\ \log|\x|_{\w}-\lwh(P)\ \le\ \log\rad(g),
		\]
		and the right-hand inequality is strict if $g>1$. In particular $\lwh(P)=\log|\x|_{\w}$ if and only if $g=1$.
	\end{lem}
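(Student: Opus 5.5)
The plan is to compute the weighted height place by place from an integral representative, and then read off the bounds from the local terms at primes dividing $g$.

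\emph{Existence of a reduced representative.} Start from any representative of $P$ and clear denominators with $\lambda\ast$ for a suitable integer $\lambda$, which gives $\x\in\Z^{n+1}\setminus\{0\}$. Whenever a prime $p$ satisfies $p^{q_i}\mid x_i$ for all $i$, replace $\x$ by $p^{-1}\ast\x$. This strictly decreases $\max_i|x_i|$ among the nonzero coordinates, so the process terminates at a $\w$-reduced vector.

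\emph{The height formula.} For integral $\x$ the archimedean term is $\log\max_i|x_i|^{1/q_i}=\log|\x|_{\w}$. At a prime $p$ we have $\nm{x_i}_p^{1/q_i}=p^{-v_p(x_i)/q_i}$, so the maximum over $i$ equals $p^{-\mu_p(\x)}$, where coordinates with $x_i=0$ contribute nothing, i.e.\ $v_p=\infty$. Summing over all places gives the stated identity. Only the finitely many $p$ dividing every nonzero coordinate contribute, so the sum is finite.

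\emph{The bounds.} Now let $\x$ be $\w$-reduced. If $p\nmid g$, some nonzero $x_i$ has $v_p(x_i)=0$, so $\mu_p=0$. If $p\mid g$, every nonzero coordinate has $v_p\ge1$, so $\mu_p\ge 1/\max_iq_i$. Since $\x$ is reduced, some index $i$ has $v_p(x_i)<q_i$, and necessarily $x_i\ne0$, because $x_i=0$ would mean $p^{q_i}\mid x_i$. Hence $\mu_p\le v_p(x_i)/q_i<1$. Summing $\mu_p\log p$ over $p\mid g$ gives
\[
\frac{1}{\max_iq_i}\log\rad(g)\ \le\ \sum_p\mu_p(\x)\log p\ <\ \log\rad(g)
\]
when $g>1$. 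Both sides are $0$ when $g=1$, which gives the "if and only if" statement.

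The main point needing care is the treatment of zero coordinates: they are excluded from the minimum defining $\mu_p$ and from the reduced-index argument, as handled above. Note also that $\x\neq0$, so some coordinate is nonzero and $\mu_p$ is finite. Apart from that, the argument is bookkeeping.
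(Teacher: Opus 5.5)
Your proof is correct and follows the paper's argument essentially step by step. You clear denominators, then divide out primes $p$ with $p^{q_i}\mid x_i$ for all $i$; you use $\max_i|x_i|$ over the nonzero coordinates as the decreasing quantity, where the paper uses $\sum_i|x_i|$. You read off the local terms $-\mu_p(\x)\log p$, and then combine $\mu_p=0$ for $p\nmid g$, $\mu_p\ge 1/\max_i q_i$ for $p\mid g$, and $\mu_p<1$ by reducedness, with your explicit treatment of zero coordinates filling in a detail the paper leaves implicit.
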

	
	\begin{proof}
		If $b$ is a common denominator of a rational representative $\x$, then $b\ast\x\in\Z^{n+1}$; while some prime satisfies $p^{q_i}\mid x_i$ for all $i$, replace $\x$ by $p^{-1}\ast\x$, which lowers $\sum_i|x_i|$. The formula holds because $\log\max_i|x_i|_p^{1/q_i}=-\mu_p(\x)\log p$. If $\x$ is $\w$-reduced, then $\mu_p(\x)<1$ for every $p$. Moreover $\mu_p(\x)=0$ if $p\nmid g$, and $\mu_p(\x)\ge1/\max_iq_i$ if $p\mid g$.
	\end{proof}
	
	\begin{exam}\label{ex:height}
		For $\w=(2,3)$ and $\x=(2,4)$ we get $\lwh(P)=\tfrac16\log2$ and $\log|\x|_{\w}=\tfrac23\log2$; indeed $\Phi(P)=[1:2]$ and $h(\Phi(P))=\log2=6\,\lwh(P)$. So $H(\Phi(P))\neq|\x|_{\w}^L$ in general, even for $\w$-reduced $\x$.
	\end{exam}
	
	\section{Diagonal forms and weighted Weil functions}\label{sec:weil}
	
	\begin{defi}\label{def:diagonal}
		For $\cc\in\bQ^{n+1}\setminus\{0\}$ the \emph{diagonal form} of weighted degree $L$ is $F_{\cc}=\sum_ic_ix_i^{r_i}$, and $H_{\cc}=\{F_{\cc}=0\}\subset\P^n_{\w}$ is the corresponding \emph{diagonal hypersurface}.
	\end{defi}
	
	We have $F_{\cc}(\lambda\ast\x)=\lambda^LF_{\cc}(\x)$ and $F_{\cc}(\x)=\ell_{\cc}(x_0^{r_0},\dots,x_n^{r_n})$ with $\ell_{\cc}(\y)=\sum_ic_iy_i$, so $H_{\cc}=\Phi^{-1}(\Lambda_{\cc})$ for the hyperplane $\Lambda_{\cc}=\{\ell_{\cc}=0\}$. When the weights are not all equal, diagonal forms span a proper subspace of the forms of weighted degree $L$; for $\w=(1,1,2)$, for example, $x_0x_1$ is not diagonal.
	
	Let $K$ contain the coordinates of $\cc$ and put $\nm{\cc}_v=\max_i\nm{c_i}_v$. For $v\in M_K$, $P=[\x]_{\w}\in\P^n_{\w}(K)\setminus H_{\cc}$ and $y=[\y]\in\P^n(K)\setminus\Lambda_{\cc}$ define
	\[
	\zeta_{\cc,v}(P)=-\log\frac{\nm{F_{\cc}(\x)}_v}{\nm{\cc}_v\,\nm{\x}_{v,\w}^L},\qquad \lambda_{\cc,v}(y)=-\log\frac{\nm{\ell_{\cc}(\y)}_v}{\nm{\cc}_v\max_i\nm{y_i}_v}.
	\]
	The function $\lambda_{\cc,\cdot}$ is the standard Weil function of $\Lambda_{\cc}$ \cite[Def.~7.9]{Vojta2011}, and both expressions are independent of representatives.
	
	\begin{lem}\label{lem:weilcompare}
		$\zeta_{\cc,v}(P)=\lambda_{\cc,v}(\Phi(P))$ and $\zeta_{\cc,v}(P)\ge-N_v\log(n+1)$. Hence $\zeta_{\cc,\cdot}$ is a Weil function for the Cartier divisor $\Phi^*\Lambda_{\cc}$ on $\P^n_{\w}$.
	\end{lem}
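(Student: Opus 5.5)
The identity $\zeta_{\cc,v}(P)=\lambda_{\cc,v}(\Phi(P))$ will follow by computing both sides on a common representative, using Lemma~\ref{lem:veronese}. The lower bound will then come from the triangle inequality for a linear form. The Weil function property will be obtained by pulling back along the morphism $\Phi$.

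First, fix a representative $\x$ of $P$ and take $\y=(x_0^{r_0},\dots,x_n^{r_n})$ as the representative of $\Phi(P)$. By the identity $F_{\cc}(\x)=\ell_{\cc}(\y)$ noted after Definition~\ref{def:diagonal}, the numerators agree. Lemma~\ref{lem:veronese} gives $\max_i\nm{y_i}_v=\nm{\x}_{v,\w}^L$, so the denominators agree as well, and the two expressions coincide. Since $P\notin H_{\cc}=\Phi^{-1}(\Lambda_{\cc})$, we have $\Phi(P)\notin\Lambda_{\cc}$, so both sides are finite. Independence of representatives is already stated in the text.

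For the lower bound it suffices to show $\nm{\ell_{\cc}(\y)}_v\le(n+1)^{N_v}\nm{\cc}_v\max_i\nm{y_i}_v$.
\begin{itemize}
\item At a non-archimedean place this holds with constant $1$ by the ultrametric inequality. This is consistent with $N_v=0$.
\item At an archimedean place, $|\sum c_iy_i|_v\le(n+1)\max|c_i|_v\max|y_i|_v$. Raising to the power $n_v$ gives the factor $(n+1)^{n_v}=(n+1)^{N_v}$.
\end{itemize}
Taking $-\log$ yields $\zeta_{\cc,v}\ge-N_v\log(n+1)$.

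For the last claim, $\lambda_{\cc,\cdot}$ is the standard Weil function of $\Lambda_{\cc}$ on $\P^n$. Functoriality of Weil functions \cite{Vojta2011} says that for a morphism $\Phi$ whose image is not contained in $\Lambda_{\cc}$, the composite $\lambda_{\cc,\cdot}\circ\Phi$ is a Weil function for $\Phi^*\Lambda_{\cc}$. The image condition holds here because $\Phi$ is surjective. Combined with the first part, this shows $\zeta_{\cc,\cdot}$ is a Weil function for $\Phi^*\Lambda_{\cc}$. The only subtlety I expect is that $\P^n_{\w}$ may be singular, so $\Phi^*\Lambda_{\cc}$ should be read as the Cartier divisor given by the section $F_{\cc}$ of the invertible sheaf $\cO(L)=\Phi^*\cO(1)$. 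It is also the main point to check. With that reading the pullback statement applies directly, and the metric on $\cO(L)$ is the pulled-back one, as remarked after Lemma~\ref{lem:veronese}.
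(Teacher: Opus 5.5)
Your proposal is correct and matches the paper's proof step for step: the identity comes from $F_{\cc}(\x)=\ell_{\cc}(\y)$ together with Lemma~\ref{lem:veronese}, the lower bound from $\nm{\ell_{\cc}(\y)}_v\le(n+1)^{N_v}\nm{\cc}_v\max_i\nm{y_i}_v$, and the Weil function property from functoriality \cite[Thm.~8.8]{Vojta2011}. Your added remark, that $\Phi^*\Lambda_{\cc}$ should be read as the Cartier divisor of the section $F_{\cc}$ of the invertible sheaf $\cO_{\P^n_{\w}}(L)=\Phi^*\cO_{\P^n}(1)$, is a useful clarification that the paper leaves implicit.
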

	
	\begin{proof}
		The identity follows from Lemma~\ref{lem:veronese} and the bound from
		\[
		\nm{\ell_{\cc}(\y)}_v\le(n+1)^{N_v}\nm{\cc}_v\max_i\nm{y_i}_v;
		\]
		the last assertion follows from functoriality of Weil functions \cite[Thm.~8.8]{Vojta2011}.
	\end{proof}
	
	\begin{prop}[Weighted first main theorem]\label{prop:fmt}
		Let $\cc\in k^{n+1}\setminus\{0\}$, $P\in\P^n_{\w}(k)\setminus H_{\cc}$ and $S\subset M_k$ finite. With $m_S(\cc,P)=\sum_{v\in S}\zeta_{\cc,v}(P)$ and $N_S(\cc,P)=\sum_{v\notin S}\zeta_{\cc,v}(P)$,
		\[
		m_S(\cc,P)+N_S(\cc,P)=L\,\lwh_k(P)+h_k([\cc]).
		\]
	\end{prop}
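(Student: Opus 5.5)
The identity is a direct sum over all places of the definition of $\zeta_{\cc,v}$, closed off with the product formula. First I will check that every term is finite. Since $P\notin H_{\cc}$, we have $F_{\cc}(\x)\neq0$ for any representative $\x$ of $P$. As $\cc$ and $\x$ have coordinates in $k$, $F_{\cc}(\x)$ lies in $k^*$. Hence $\nm{F_{\cc}(\x)}_v$ is a nonzero real number for every $v\in M_k$, and it equals $1$ for all but finitely many $v$. The same holds for $\nm{\cc}_v$ and $\nm{\x}_{v,\w}$. So only finitely many $\zeta_{\cc,v}(P)$ are nonzero and the sum $N_S(\cc,P)$ is finite.

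Next I fix one representative $\x\in k^{n+1}\setminus\{0\}$ of $P$. By Lemma~\ref{lem:weilcompare}, or directly from the homogeneity $F_{\cc}(\lambda\ast\x)=\lambda^LF_{\cc}(\x)$ and $\nm{\lambda\ast\x}_{v,\w}=\nm{\lambda}_v\nm{\x}_{v,\w}$, each $\zeta_{\cc,v}(P)$ may be computed from this one $\x$. Summing over all $v\in M_k$ gives
\[
m_S(\cc,P)+N_S(\cc,P)=\sum_{v\in M_k}\zeta_{\cc,v}(P)=-\sum_{v}\log\nm{F_{\cc}(\x)}_v+\sum_v\log\nm{\cc}_v+L\sum_v\log\nm{\x}_{v,\w}.
\]
The sum splits into three separate sums because each of them is a finite sum.

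Finally I identify the three sums. The first vanishes by the product formula applied to $F_{\cc}(\x)\in k^*$. The second is $h_k([\cc])$ by the definition of $h_k$; note that $[\cc]\in\P^n(k)$ because $\cc\neq0$. The third is $L\,\lwh_k(P)$ by the definition of the weighted height. One could instead pull the standard first main theorem for the hyperplane $\Lambda_{\cc}$ back along $\Phi$, using $\zeta_{\cc,v}=\lambda_{\cc,v}\circ\Phi$ and $h_k(\Phi(P))=L\lwh_k(P)$ from Lemma~\ref{lem:veronese}; this gives the same identity.

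There is no real obstacle. The only care needed is to work with a single representative $\x$ throughout, and to note that the product formula requires $F_{\cc}(\x)\neq0$, which is exactly the hypothesis $P\notin H_{\cc}$.
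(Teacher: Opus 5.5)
Your proposal is correct and follows the paper's proof exactly: summing $\zeta_{\cc,v}(P)$ over all $v\in M_k$ for a fixed representative $\x$ gives $h_k([\cc])+L\,\lwh_k(P)-\sum_v\log\nm{F_{\cc}(\x)}_v$, and the last sum vanishes by the product formula because $F_{\cc}(\x)\in k^*$. Your additional remarks on the finiteness of the sums and the independence from the choice of representative supply details that the paper's one-line proof leaves implicit.
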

	
	\begin{proof}
		Sum over $v\in M_k$ and apply the product formula to $F_{\cc}(\x)\in k^*$.
	\end{proof}
	
	\begin{defi}\label{def:chordal}
		Let $n=1$ and $\alpha=[\alpha_0:\alpha_1]_{\w}\in\P^1_{\w}(\bQ)$ with representative $\aaa=(\alpha_0,\alpha_1)$. Put
		\[
		F_\alpha(X_0,X_1)=\alpha_1^{r_1}X_0^{r_0}-\alpha_0^{r_0}X_1^{r_1},\qquad \delta_v(P,\alpha)=\frac{\nm{F_\alpha(\x)}_v}{\nm{\x}_{v,\w}^L\,\nm{\aaa}_{v,\w}^L}\quad(P=[\x]_{\w}).
		\]
		We call $\delta_v$ the \emph{weighted chordal distance}.
	\end{defi}
	
	For $y,\beta\in\P^1(\bQ)$ let $\delta_v(y,\beta)=\nm{y_0\beta_1-y_1\beta_0}_v/(\max_i\nm{y_i}_v\max_i\nm{\beta_i}_v)$ be the chordal distance, so that $\lambda_{\beta,v}=-\log\delta_v(\cdot,\beta)$ is the Weil function of the point $\beta$.
	
	\begin{lem}\label{lem:chordal}
		$\delta_v(P,\alpha)$ is independent of representatives, and $\delta_v(P,\alpha)=\delta_v(\Phi(P),\Phi(\alpha))$. In particular $\delta_v(P,\alpha)=0$ if and only if $\Phi(P)=\Phi(\alpha)$, and $-\log\delta_v(P,\alpha)=\zeta_{\cc,v}(P)$ for $\cc=(\alpha_1^{r_1},-\alpha_0^{r_0})$.
	\end{lem}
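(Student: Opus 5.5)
The plan is a direct computation from Definition~\ref{def:chordal} combined with Lemma~\ref{lem:veronese}. The starting observation is that $F_\alpha(\x)=\alpha_1^{r_1}x_0^{r_0}-\alpha_0^{r_0}x_1^{r_1}$ is exactly the chordal numerator $y_0\beta_1-y_1\beta_0$ for the representatives $\y=(x_0^{r_0},x_1^{r_1})$ of $\Phi(P)$ and $\boldsymbol\beta=(\alpha_0^{r_0},\alpha_1^{r_1})$ of $\Phi(\alpha)$. By Lemma~\ref{lem:veronese}, applied at $v$ (extended to $\bQ$ as in the Conventions), the two denominators match as well:
\[
\nm{\x}_{v,\w}^L=\max_i\nm{y_i}_v,\qquad \nm{\aaa}_{v,\w}^L=\max_i\nm{\beta_i}_v .
\]
Hence $\delta_v(P,\alpha)=\delta_v(\Phi(P),\Phi(\alpha))$ holds identically, for every choice of representatives $\x,\aaa$. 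I will state explicitly that this lemma is valid for vectors over any finite extension containing the coordinates, since $\alpha$ is only algebraic.

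Independence of representatives then follows in either of two ways. One can argue directly: replacing $\x$ by $\lambda\ast\x$ multiplies $F_\alpha(\x)$ by $\lambda^L$ and $\nm{\x}_{v,\w}^L$ by $\nm{\lambda}_v^L$. Replacing $\aaa$ by $\mu\ast\aaa$ multiplies the coefficients $\alpha_1^{r_1}$ and $\alpha_0^{r_0}$ by $\mu^L$ and $\nm{\aaa}_{v,\w}^L$ by $\nm{\mu}_v^L$. Alternatively, one can deduce it from the identity above, because the classical chordal distance on $\P^1$ is already independent of representatives. Either route is routine.

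The vanishing statement comes next. The classical chordal distance vanishes exactly when the two points of $\P^1$ coincide, since $y_0\beta_1-y_1\beta_0=0$ is equivalent to proportionality. So $\delta_v(P,\alpha)=0$ if and only if $\Phi(P)=\Phi(\alpha)$.

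Finally, take $\cc=(\alpha_1^{r_1},-\alpha_0^{r_0})$. Then $F_{\cc}=F_\alpha$, and
\[
\nm{\cc}_v=\max\{\nm{\alpha_1^{r_1}}_v,\nm{\alpha_0^{r_0}}_v\}=\nm{\aaa}_{v,\w}^L ,
\]
again by Lemma~\ref{lem:veronese}. Substituting this into the definition of $\zeta_{\cc,v}$ gives $\zeta_{\cc,v}(P)=-\log\delta_v(P,\alpha)$. The only point requiring mild care is that $\cc$ has algebraic coordinates, so $\zeta_{\cc,v}$ must be read over a field containing them, as the definition permits. I expect no real obstacle; the substance is the single identity supplied by Lemma~\ref{lem:veronese}.
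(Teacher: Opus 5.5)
Your proposal is correct and follows essentially the same route as the paper. Both rest on the identity $F_\alpha(\x)=y_0\beta_1-y_1\beta_0$ for $\y=(x_0^{r_0},x_1^{r_1})$ and $\boldsymbol\beta=(\alpha_0^{r_0},\alpha_1^{r_1})$, on the matching of denominators by Lemma~\ref{lem:veronese}, and on the observation $\nm{\cc}_v=\nm{\aaa}_{v,\w}^L$; your remarks on independence of representatives and on applying Lemma~\ref{lem:veronese} over $\bQ$ with the fixed extension of $|\cdot|_v$ only make explicit what the paper leaves implicit.
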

	
	\begin{proof}
		With $\y=(x_0^{r_0},x_1^{r_1})$ and $\boldsymbol\beta=(\alpha_0^{r_0},\alpha_1^{r_1})$ one has $F_\alpha(\x)=y_0\beta_1-y_1\beta_0$, and Lemma~\ref{lem:veronese} identifies the denominators; note $\nm{\cc}_v=\nm{\aaa}_{v,\w}^L$.
	\end{proof}
	
	\section{Roth's theorem for rational points}\label{sec:roth}
	
	\begin{thm}[Roth]\label{thm:roth}
		Let $S\subset M_k$ be finite and $\beta_v\in\P^1(\bQ)$ for $v\in S$. For every $\epsilon>0$ and $c\in\R$,
		\[
		\sum_{v\in S}-\log\delta_v(y,\beta_v)\le(2+\epsilon)\,h_k(y)+c
		\]
		for all but finitely many $y\in\P^1(k)$.
	\end{thm}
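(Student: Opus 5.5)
This is the classical $S$-adic Roth theorem (Ridout, Lang, Mahler) in height/Weil-function language, which the paper cites rather than reproves. So the plan is to reduce it to the standard formulation in \cite[Thm.~6.4.1]{BG2006} or \cite{Vojta2011}, rather than to redo Roth's auxiliary-polynomial argument.

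The first step is a normalization. For $y=[y_0:y_1]$ with $y_1\neq0$ write $y=[x:1]$ with $x\in k$; the finitely many $y$ with $y_1=0$ are absorbed into the exceptional set. Likewise, when $\beta_v=[b:1]$ with $b\in\bQ$, the chordal distance satisfies
\[
-\log\delta_v(y,\beta_v)=-\log\nm{x-b}_v+\log\max\{1,\nm{x}_v\}+\log\max\{1,\nm{b}_v\}.
\]
If $\beta_v=[1:0]$, apply the automorphism $y\mapsto[y_1:y_0]$, or equivalently use $-\log\delta_v=\log\max\{1,\nm{x}_v\}$. Enlarging $k$ to contain all finite $\beta_v$ is not needed, since $\nm{\cdot}_v$ was extended to $\bQ$ in the Conventions. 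If some $\beta_v\in\P^1(k)$, then $\delta_v(y,\beta_v)$ vanishes only at the single point $y=\beta_v$. Away from that point the contribution of such a $v$ is handled by the same statement, because Roth's theorem in the form of \cite{BG2006} allows $\beta_v\in\P^1(k)$ and also allows targets that are rational.

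The second step is to recognize the resulting inequality. It reads
\[
\sum_{v\in S}\log\min\{1,\nm{x-b_v}_v\}^{-1}\le(2+\epsilon)h_k(x)+c'.
\]
This holds because $\log\max\{1,\nm{x}_v\}\le h_k$-contributions, and $-\log\min\{1,\cdot\}$ differs from $-\log\nm{\cdot}_v$ only where the latter is negative, which only helps. After dividing by $[k:\Q]$ this is exactly the Ridout–Lang $S$-adic Roth theorem in absolute-height normalization. The extra bounded terms $\log\max\{1,\nm{b}_v\}$ go into the constant, and since $\epsilon$ is arbitrary we may shrink it.

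The only real obstacle is bookkeeping. The terms $\log\max\{1,\nm{x}_v\}$ must be summed over $v\in S$ only, and we need
\[
\sum_{v\in S}\log\max\{1,\nm{x}_v\}\le h_k(x).
\]
This is true because every local term $\log\max\{1,\nm{x}_v\}$ is nonnegative. That summation, however, inflates the left side by up to $h_k(x)$, which would give exponent $3$ instead of $2$.

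To avoid this, use the chordal form directly as in \cite[Thm.~6.4.1 and Rem.]{BG2006} or \cite[Thm.~2.2 / Thm.~7.10]{Vojta2011}, where Roth is stated precisely as $\sum_{v\in S}\lambda_{\beta_v,v}(y)\le(2+\epsilon)h(y)+O(1)$ with $\lambda_{\beta,v}=-\log\delta_v(\cdot,\beta)$. So I would cite that formulation verbatim. I would then only check that the paper's normalizations agree with the cited source, namely the choice $\nm{\cdot}_v=|\cdot|_v^{n_v}$, $h_k$ rather than $h$, and the chordal distance as defined. Any discrepancy in these normalizations changes $\delta_v$ only by a bounded factor or rescales both sides by $[k:\Q]$, so it is absorbed into $c$ and $\epsilon$.
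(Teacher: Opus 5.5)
Your final step, quoting Roth's theorem in Weil-function form $\sum_{v\in S}\lambda_{\beta_v,v}(y)\le(2+\epsilon)h_k(y)+O(1)$ and checking normalizations, is acceptable. In substance it is what the paper does, since the paper also only reduces the statement to a cited version. However, your reason for abandoning the reduction to the affine Ridout--Lang ($\log^+$) form is mistaken, and that reduction is exactly the paper's argument.

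The exponent $3$ appears only because you bound $-\log\nm{x-b_v}_v$ and $\log\max\{1,\nm{x}_v\}$ separately. They cancel place by place:
\begin{itemize}
\item If $\nm{x}_v>2^{N_v}\max\{1,\nm{b_v}_v\}$, then $\nm{x-b_v}_v\ge 2^{-N_v}\nm{x}_v>1$. Hence $\log^+\nm{x-b_v}_v^{-1}=0$, while $-\log\delta_v(y,\beta_v)\le N_v\log2+\log\max\{1,\nm{b_v}_v\}$.
\item Otherwise $\log\max\{1,\nm{x}_v\}$ is bounded by that same constant, and $-\log\nm{x-b_v}_v\le\log^+\nm{x-b_v}_v^{-1}$.
\end{itemize}
So $-\log\delta_v(y,\beta_v)\le\log^+\nm{x-b_v}_v^{-1}+O(1)$ uniformly in $x$. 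In fact the two differ by $O(1)$, as both are Weil functions of $\beta_v$; this is the comparison the paper states. Summing over $v\in S$, the affine form gives the theorem with no loss. Your third paragraph should therefore go, and the garbled justification in the second should be replaced by this case split.

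The second point is targets at $[1:0]$. The swap $y\mapsto[y_1:y_0]$ fails if $[0:1]$ also occurs among the $\beta_v$. Writing $-\log\delta_v=\log\max\{1,\nm{x}_v\}$ at those places does not bring them into the affine form either, because that form needs a single coordinate $x$ with affine targets at all $v\in S$ simultaneously. The paper instead applies a $k$-rational automorphism $\sigma$ of $\P^1$ sending some point of $\P^1(k)\setminus\{\beta_v\}$ to $[1:0]$. Then every $\sigma(\beta_v)$ is affine. By functoriality of Weil functions, and since $S$ is finite, $h_k$ and each $-\log\delta_v$ change only by $O(1)$.

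If the source you quote really states the chordal form for arbitrary $\beta_v\in\P^1(\bQ)$, this step is built in; otherwise you need it. With these two corrections your argument coincides with the paper's reduction.
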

	
	For $\beta_v=[b_v:1]$ and $y=[x:1]$ one has $-\log\delta_v(y,\beta_v)=\log^+\nm{x-b_v}_v^{-1}+O(1)$, so in this case the statement is \cite[Thm.~3.3 and (3.4)]{Vojta2011}; see also \cite[Thm.~D.2.1]{HS2000} and \cite[Ch.~6]{BG2006}. The general case follows by a $k$-rational change of coordinates moving every $\beta_v$ away from $[1:0]$, which changes heights and Weil functions by bounded amounts \cite[Thm.~8.8]{Vojta2011}. Since $-\log\delta_v\ge-N_v\log2$, the set $S$ may be enlarged at will.
	
	\begin{thm}[Weighted Roth theorem]\label{thm:wroth}
		Let $\w=(q_0,q_1)$, $L=\lcm(q_0,q_1)$, let $S\subset M_k$ be finite and $\alpha_v\in\P^1_{\w}(\bQ)$ for $v\in S$. For every $\epsilon>0$ and $c\in\R$ there is a finite set $Z\subset\P^1(k)$ with
		\[
		\begin{aligned}
			\sum_{v\in S}-\log\delta_v(P,\alpha_v)
			&\le(2+\epsilon)\,L\,\lwh_k(P)+c,
		\end{aligned}
		\]
		for all $P\in\P^1_{\w}(k)$ with $\Phi(P)\notin Z$.
	\end{thm}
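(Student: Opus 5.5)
The plan is to pull Theorem~\ref{thm:roth} back along the weighted Veronese map $\Phi$, using the exact comparisons already established. For each $v\in S$ put $\beta_v=\Phi(\alpha_v)\in\P^1(\bQ)$. This is well defined, since $\Phi$ is given by monomials with rational coefficients and makes sense over $\bQ$. Lemma~\ref{lem:chordal} gives, for every $P\in\P^1_{\w}(k)$,
\[
\delta_v(P,\alpha_v)=\delta_v(\Phi(P),\beta_v).
\]
Lemma~\ref{lem:veronese} gives $h_k(\Phi(P))=L\,\lwh_k(P)$. Both sides of the desired inequality are therefore functions of $y=\Phi(P)\in\P^1(k)$ alone. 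Note that $\Phi(P)\in\P^1(k)$ because $P$ has a representative with coordinates in $k$.

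Next we apply Theorem~\ref{thm:roth} with the same $S$, the targets $\beta_v$, and the given $\epsilon$ and $c$. It yields a finite set $Z\subset\P^1(k)$ such that
\[
\sum_{v\in S}-\log\delta_v(y,\beta_v)\le(2+\epsilon)\,h_k(y)+c
\]
for all $y\in\P^1(k)\setminus Z$. We then substitute $y=\Phi(P)$ for any $P$ with $\Phi(P)\notin Z$ and rewrite both sides using the two identities above. This gives exactly
\[
\sum_{v\in S}-\log\delta_v(P,\alpha_v)\le(2+\epsilon)L\,\lwh_k(P)+c.
\]

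We should also treat the degenerate case $\Phi(P)=\beta_v$ for some $v$. There $\delta_v=0$ and the left side is $+\infty$. Theorem~\ref{thm:roth} already excludes such $y$, since they lie in its finite exceptional set; to be safe, we adjoin the finitely many $\beta_v$ lying in $\P^1(k)$ to $Z$.

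The main point requiring care is that $Z$ lives on $\P^1(k)$ rather than on the orbit set. By Lemma~\ref{lem:fibres}(c) and Remark~\ref{rem:northcott}, the fibres of $\Phi$ over $[1:0]$ and $[0:1]$ can be infinite. Hence the exceptional set in $\P^1_{\w}(k)$ must be stated as $\Phi^{-1}(Z)$, which is how the theorem is phrased. Beyond this there is no real obstacle. The argument uses neither the coprimality of the weights nor any property of $\Phi$ beyond the two exact identities, so it works for arbitrary $\w$ with $L=\lcm(q_0,q_1)$.
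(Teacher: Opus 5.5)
Your proposal is correct and is exactly the paper's argument: apply Theorem~\ref{thm:roth} to $y=\Phi(P)$ with targets $\beta_v=\Phi(\alpha_v)$, and translate both sides using Lemmas~\ref{lem:chordal} and~\ref{lem:veronese}. Your side remarks are accurate but not needed for the proof: coprimality of the weights is indeed not used, and the exceptional set in the orbit set is $\Phi^{-1}(Z)$.
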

	
	\begin{proof}
		By Lemmas~\ref{lem:chordal} and~\ref{lem:veronese} this is Theorem~\ref{thm:roth} for $y=\Phi(P)$ and $\beta_v=\Phi(\alpha_v)$.
	\end{proof}
	
	For $\w=(1,1)$ Theorem~\ref{thm:wroth} is Theorem~\ref{thm:roth}, so the two are equivalent. The content of the weighted statement is that it holds for the adelic height $\lwh$, for a distance of weighted degree $L$, and with exceptional sets finite modulo the fibres of $\Phi$. As in Roth's theorem, $Z$ is ineffective \cite[Rem.~D.7.2]{HS2000}.
	
	\begin{cor}\label{cor:rothQ}
		Let $\alpha\in\P^1_{\w}(\bQ)$ and $\epsilon>0$. There is $C=C(\alpha,\w,\epsilon)>0$ such that $|F_\alpha(\x)|\ge C|\x|_{\w}^{-(1+\epsilon)L}$ for every $\w$-reduced $\x\in\Z^2$ with $F_\alpha(\x)\neq0$.
	\end{cor}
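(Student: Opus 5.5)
We specialize Theorem~\ref{thm:wroth} to $k=\Q$, $S=\{\infty\}$ and $\alpha_\infty=\alpha$. The plan is to convert the Weil-function inequality into the stated lower bound for $|F_\alpha(\x)|$, and then to absorb the finitely many exceptional images into the constant $C$.

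First, fix a $\w$-reduced $\x\in\Z^2$ with $F_\alpha(\x)\neq0$, and put $P=[\x]_{\w}$. By Lemma~\ref{lem:chordal} we have $\Phi(P)\neq\Phi(\alpha)$. By Definition~\ref{def:chordal},
\[
-\log\delta_\infty(P,\alpha)=-\log|F_\alpha(\x)|+L\log|\x|_{\w}+L\log\nm{\aaa}_{\infty,\w},
\]
where the last term is a constant depending only on $\alpha$ and $\w$. We also need a bound for $\lwh(P)$. Since $\x$ is integral, every non-archimedean term $\log\max_i|x_i|_p^{1/q_i}$ is $\le0$, so Lemma~\ref{lem:integral} gives $\lwh(P)\le\log|\x|_{\w}$. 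We do not need $g=1$ here; the upper bound is all we use.

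Second, apply Theorem~\ref{thm:wroth} with $\epsilon$ and $c=0$. It gives a finite set $Z\subset\P^1(\Q)$ such that, whenever $\Phi(P)\notin Z$,
\[
-\log|F_\alpha(\x)|+L\log|\x|_{\w}\le(2+\epsilon)L\log|\x|_{\w}+O(1).
\]
Rearranging, this is $|F_\alpha(\x)|\ge C'|\x|_{\w}^{-(1+\epsilon)L}$.

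Third, we handle the exceptional set. This is the only point needing care, because of the orbit/fibre issue in Lemma~\ref{lem:fibres}. Fix $z\in Z$. We claim that the $\w$-reduced integral $\x$ with $\Phi([\x]_{\w})=z$ and $F_\alpha(\x)\neq0$ give only finitely many values of $|\x|_{\w}$, so that on them $|F_\alpha(\x)|\,|\x|_{\w}^{(1+\epsilon)L}$ is bounded below by a positive constant.
\begin{itemize}
\item If $z\notin\{[0:1],[1:0]\}$, then $x_0x_1\neq0$, and Lemma~\ref{lem:fibres}(b) shows that the orbit $[\x]_{\w}$ is unique. A $\w$-reduced integral representative of a fixed orbit is unique up to $\lambda\ast$ with $\lambda=\pm1$. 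Indeed, if $\lambda=a/b$ in lowest terms and $b>1$, then a prime $p\mid b$ satisfies $p^{q_i}\mid x_i$ for all $i$, contradicting reducedness; the symmetric argument with $a$ handles $a>1$. Hence there are only finitely many such $\x$.
\item If $z=[1:0]$, then $\x=(a,0)$ with $a$ a nonzero integer that is $q_0$-th power free. In this case $|F_\alpha(\x)|=|\alpha_1^{r_1}|\,|a|^{r_0}$ and $|\x|_{\w}^{L}=|a|^{r_0}$, so $|F_\alpha(\x)|\,|\x|_{\w}^{(1+\epsilon)L}\ge|\alpha_1|^{r_1}$, which is positive since $F_\alpha(\x)\neq0$. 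The case $z=[0:1]$ is symmetric.
\end{itemize}

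Taking $C$ to be the minimum of $C'$ and these finitely many positive constants completes the argument. The main subtlety is the fibre analysis in the third step; the first two steps are direct unwinding of the definitions.
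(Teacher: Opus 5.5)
Your first two steps match the paper's proof. The gap is in the first bullet of your third step.

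Lemma~\ref{lem:fibres}(b) is proved only under $\gcd(q_0,q_1)=1$. The corollary, like Theorem~\ref{thm:wroth}, is stated for arbitrary $\w=(q_0,q_1)$ with $L=\lcm(q_0,q_1)$; coprimality is imposed explicitly in Lemma~\ref{lem:fibres} and Proposition~\ref{prop:optimal}, but not here. Without coprimality, your claim that each fibre over $z\in Z$ gives only finitely many values of $|\x|_{\w}$ is false. For $\w=(2,2)$, take $\x=(a,a)$ with $a$ squarefree. These vectors are $\w$-reduced, lie in pairwise distinct orbits (since $a'/a\notin\Q^{*2}$), all map to $[1:1]$, and have $|\x|_{\w}=|a|^{1/2}$ unbounded. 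You also cannot avoid this fibre, because you have no control over $Z$. For instance, with $\Phi(\alpha)=[\sqrt2:1]$ and $c=0$, the point $[1:1]$ has height $0$ and satisfies $\delta_\infty([1:1],\Phi(\alpha))<1$, so it must lie in $Z$.

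The repair is the computation you already did in your second bullet, applied to every $z\in Z$ at once; this is exactly the paper's argument. By Lemma~\ref{lem:chordal}, $\delta_\infty(P,\alpha)=\delta_\infty(z,\Phi(\alpha))=:\delta_z$ depends only on $z=\Phi(P)$, and $\delta_z>0$ because $z\neq\Phi(\alpha)$. Hence
\[
|F_\alpha(\x)|=\delta_z\,\nm{\aaa}_{\infty,\w}^L\,|\x|_{\w}^L\ge\delta_z\,\nm{\aaa}_{\infty,\w}^L\,|\x|_{\w}^{-(1+\epsilon)L},
\]
since $|\x|_{\w}\ge1$ for every nonzero integral $\x$. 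This needs no description of the fibres, no uniqueness of reduced representatives, and no separate treatment of $[0:1]$ and $[1:0]$. For coprime weights (the setting of Theorem~A) your argument is correct as written, just longer than necessary.
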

	
	\begin{proof}
		Write $P=[\x]_{\w}$, so that $\delta_\infty(P,\alpha)=|F_\alpha(\x)|/(|\x|_{\w}^L|\aaa|_{\w}^L)$, and let $Z$ be given by Theorem~\ref{thm:wroth} with $k=\Q$, $S=\{\infty\}$, $c=0$. If $\Phi(P)\notin Z$, then Lemma~\ref{lem:integral} gives $-\log\delta_\infty(P,\alpha)\le(2+\epsilon)L\,\lwh(P)\le(2+\epsilon)L\log|\x|_{\w}$, whence $|F_\alpha(\x)|\ge|\aaa|_{\w}^L|\x|_{\w}^{-(1+\epsilon)L}$. If $\Phi(P)=z\in Z$ with $z\neq\Phi(\alpha)$, then $\delta_\infty(P,\alpha)=\delta_\infty(z,\Phi(\alpha))=:\delta_z>0$, and $|F_\alpha(\x)|=\delta_z|\aaa|_{\w}^L|\x|_{\w}^L\ge\delta_z|\aaa|_{\w}^L|\x|_{\w}^{-(1+\epsilon)L}$ because $|\x|_{\w}\ge1$.
	\end{proof}
	
	\begin{prop}\label{prop:optimal}
		Assume $\gcd(q_0,q_1)=1$, and let $\alpha\in\P^1_{\w}(\bQ)$ with $\Phi(\alpha)=[b:1]$, $b\in\R\setminus\Q$. There are infinitely many $P\in\P^1_{\w}(\Q)$, with distinct images under $\Phi$, such that $-\log\delta_\infty(P,\alpha)\ge2L\,\lwh(P)-\log2$.
	\end{prop}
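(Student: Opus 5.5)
Since $\gcd(q_0,q_1)=1$ we have $L=q_0q_1$, and Lemma~\ref{lem:chordal} together with Lemma~\ref{lem:veronese} moves the whole problem to $\P^1$. For $P\in\P^1_{\w}(\Q)$ with $y=\Phi(P)$ we have $-\log\delta_\infty(P,\alpha)=-\log\delta_\infty(y,[b:1])$ and $L\,\lwh(P)=h(y)$. So it suffices to find infinitely many distinct $y\in\P^1(\Q)$ with
\[
-\log\delta_\infty(y,[b:1])\ge 2h(y)-\log 2,
\]
and then lift each $y$ to some $P$ using the surjectivity in Lemma~\ref{lem:fibres}(a). Distinct $y$ automatically give points $P$ with distinct images under $\Phi$.

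For $y$ I take the convergents $y=[p:q]$ with $\gcd(p,q)=1$ and $q\ge1$. Because $b$ is real and irrational, Dirichlet's theorem gives infinitely many such fractions with $|b-p/q|<1/q^2$, and they are pairwise distinct. For these we have $h(y)=\log\max\{|p|,q\}$. The chordal distance is
\[
\delta_\infty(y,[b:1])=\frac{|p-qb|}{\max\{|p|,q\}\max\{|b|,1\}}.
\]

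The only real work is the constant, i.e.\ showing
\[
\frac{|p-qb|}{\max\{|p|,q\}\max\{|b|,1\}}\le\frac{2}{\max\{|p|,q\}^2}.
\]
We know $|p-qb|<1/q$, so it is enough to check $\max\{|p|,q\}\le 2q\max\{|b|,1\}$. This follows from $|p|\le q|b|+1/q\le q|b|+q$.

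That bound gives $\max\{|p|,q\}\le q(|b|+1)\le 2q\max\{|b|,1\}$, which is exactly what is needed. Hence
\[
\delta_\infty\le \frac{2}{\max\{|p|,q\}^2},
\]
and taking $-\log$ yields $-\log\delta_\infty(P,\alpha)\ge 2h(y)-\log2=2L\,\lwh(P)-\log 2$.

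A further point to note is that the representative $\aaa$ of $\alpha$ does not matter. Its contribution $\nm{\aaa}_{v,\w}^L$ is exactly $\nm{(\alpha_0^{r_0},\alpha_1^{r_1})}$, which projectively is $\max\{|b|,1\}$ after scaling. The ratio is scale-invariant, so taking $\Phi(\alpha)=[b:1]$ is legitimate and no extra constant appears.
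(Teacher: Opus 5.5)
Your proposal is correct and is essentially the paper's argument: you obtain infinitely many Dirichlet approximants, bound the maximum of the coordinates by $q(|b|+1)$ to get $\delta_\infty(y,[b:1])\le 2\max\{|p|,q\}^{-2}$, and then lift each $y$ to some $P$ via Lemma~\ref{lem:fibres}(a). The transfer from $y$ to $P$ uses $h(\Phi(P))=L\,\lwh(P)$ and $\delta_\infty(P,\alpha)=\delta_\infty(\Phi(P),\Phi(\alpha))$, exactly as in the paper.
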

	
	\begin{proof}
		By Dirichlet's theorem \cite[Prop.~D.1.1]{HS2000} there are infinitely many coprime $(u,v)$ with $v\ge1$ and $|u/v-b|<v^{-2}$. Put $y=[u:v]$ and $M=\max\{|u|,|v|\}$, so $h(y)=\log M$ and $M\le(|b|+1)v$. Then
		\[
		\begin{aligned}
			\delta_\infty(y,[b:1])
			&=\frac{|u-bv|}{M\max\{|b|,1\}}
			<\frac{1}{vM\max\{|b|,1\}}\\
			&\le\frac{|b|+1}{\max\{|b|,1\}}M^{-2}
			\le2M^{-2}.
		\end{aligned}
		\]
		Choose $P$ with $\Phi(P)=y$ (Lemma~\ref{lem:fibres}(a)); then $h(y)=L\,\lwh(P)$ and $\delta_\infty(P,\alpha)=\delta_\infty(y,[b:1])$.
	\end{proof}
	
	\begin{rem}\label{rem:nonintegral}
		The points in Proposition~\ref{prop:optimal} are in general not integral. For instance, if $\w=(1,2)$ and $u$ is squarefree, the $\w$-reduced representative of $\Phi^{-1}([u:v])$ is $(u,uv)$, whose coordinates have greatest common divisor $|u|$, and Lemma~\ref{lem:integral} gives $\log|\x|_{\w}-\lwh(P)=\tfrac12\log|u|$. This is the source of the gap between Theorem~\ref{thm:wroth} and the behaviour of integral points studied next.
	\end{rem}
	
	\section{Integral points and the exponent \texorpdfstring{$q_0+q_1$}{q0+q1}}\label{sec:integral}
	
	In this section $\gcd(q_0,q_1)=1$, so $L=q_0q_1$, $r_0=q_1$ and $r_1=q_0$. We write $\P^1_{\w}(\Z)$ for the set of $P\in\P^1_{\w}(\Q)$ having a representative $\x\in\Z^2$ with $\gcd(x_0,x_1)=1$, and call such $P$ \emph{integral}.
	
	\subsection{Integral points and the integral weighted height}
	
	\begin{lem}\label{lem:integralpoints}
		\begin{enumerate}[(a)]
			\item Every $P\in\P^1_{\w}(\Q)$ has a $\w$-reduced representative $\x\in\Z^2$, unique up to $\x\mapsto(-1)\ast\x$. In particular $\wh_{\Z}(P):=|\x|_{\w}$ is well defined, and $\log\wh_{\Z}(P)\ge\lwh(P)$, with equality if and only if $P$ is integral.
			\item For $P\in\P^1_{\w}(\Z)$ with coprime representative $\x$, we have $\Phi(P)=[x_0^{q_1}:x_1^{q_0}]$ with $\gcd(x_0^{q_1},x_1^{q_0})=1$. The map $\Phi$ is at most two-to-one on $\P^1_{\w}(\Z)$, and its image consists of the $[u:v]\in\P^1(\Q)$ with $u,v$ coprime integers such that $\pm u$ is a $q_1$-th power and $\pm v$ is a $q_0$-th power.
			\item As $T\to\infty$, the number of $P\in\P^1_{\w}(\Z)$ with $\lwh(P)\le\log T$ is $\asymp T^{q_0+q_1}$, whereas the number of points $\Phi(P)$ with $P\in\P^1_{\w}(\Q)$ and $\lwh(P)\le\log T$ is $\asymp T^{2L}$.
		\end{enumerate}
	\end{lem}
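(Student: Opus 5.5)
For (a) I would take existence from Lemma~\ref{lem:integral}. For uniqueness, suppose $\x,\y\in\Z^2$ are both $\w$-reduced representatives of the same $P$, so $\y=\lambda\ast\x$ with $\lambda\in\Q^*$. I would write $\lambda=\pm\prod_p p^{e_p}$ and argue one prime at a time. If $e_p>0$, then $p^{q_i}\mid y_i$ for both $i$, which contradicts the reducedness of $\y$. If $e_p<0$, then $x_i=\lambda^{-q_i}y_i$ is divisible by $p^{q_i}$ for both $i$, which contradicts the reducedness of $\x$. Hence $\lambda=\pm1$. The well-definedness of $\wh_{\Z}$ follows, since $|(-1)\ast\x|_{\w}=|\x|_{\w}$. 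For the inequality, Lemma~\ref{lem:integral} gives $\log|\x|_{\w}-\lwh(P)\ge0$, with equality if and only if $g=\gcd(x_0,x_1)=1$. So equality holds exactly when the reduced representative is coprime. Conversely, if $P$ has some coprime representative, that representative is $\w$-reduced, so by uniqueness it is the reduced one up to sign.

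For (b) the formula $\Phi(P)=[x_0^{q_1}:x_1^{q_0}]$ is the definition, and coprimality of the powers follows from $\gcd(x_0,x_1)=1$. For the fibres, suppose two integral points have the same image $[u:v]$ with $u,v$ coprime. Their coprime representatives satisfy $x_0^{q_1}=\pm u$ and $x_1^{q_0}=\pm v$, with the same sign. I would argue as follows.
\begin{itemize}
\item[$\bullet$] Since $\gcd(q_0,q_1)=1$, at most one of $q_0,q_1$ is even.
\item[$\bullet$] Consequently $|x_0|$ and $|x_1|$ are determined, and the signs of $x_0$ and $x_1$ are determined up to the choices allowed by even exponents.
\item[$\bullet$] Modulo the action $(-1)\ast$, the number of resulting points is at most $2$.
\end{itemize}
Rather than enumerating cases, I could use Lemma~\ref{lem:fibres}(b) when $x_0x_1\ne0$, which gives injectivity there. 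The only remaining cases are the points $[\pm1:0]_{\w}$ and $[0:\pm1]_{\w}$, and each image has at most two preimages among them. The description of the image is then immediate: any such $[u:v]$ lifts to $[x_0:x_1]_{\w}$ with $x_0^{q_1}=\pm u$ and $x_1^{q_0}=\pm v$, where the signs are absorbed into $x_0$ or $x_1$ as needed.

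For (c), integral points with $\lwh(P)\le\log T$ correspond, up to sign, to coprime pairs with $|x_0|\le T^{q_0}$ and $|x_1|\le T^{q_1}$. Counting coprime lattice points in a box of area $\asymp T^{q_0+q_1}$ gives $\asymp T^{q_0+q_1}$ by a standard Möbius argument. The lower bound holds because a positive proportion of pairs are coprime, and the identification loses only a factor of at most $2$. For the second count, Lemma~\ref{lem:veronese} gives $h(\Phi(P))=L\lwh(P)$, and Lemma~\ref{lem:fibres}(a) shows that $\Phi$ is surjective onto $\P^1(\Q)$. So the count equals $\#\{y\in\P^1(\Q):H(y)\le T^L\}\asymp (T^L)^2=T^{2L}$.

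I expect the main obstacle to be the careful handling of signs and of the zero-coordinate points in (b), which is where the ``at most two-to-one'' bound comes from. The rest reduces to results already established.
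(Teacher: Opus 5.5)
Your proposal is correct and follows the paper's proof essentially step for step. The steps are: uniqueness of the $\w$-reduced representative by forcing $\lambda=\pm1$ from reducedness (your prime-by-prime version is the paper's numerator/denominator argument); the height comparison from Lemma~\ref{lem:integral}; injectivity away from $[1:0],[0:1]$ via Lemma~\ref{lem:fibres}(b), with sign absorption because one weight is odd; and the two counts via M\"obius inversion and surjectivity of $\Phi$ together with $h\circ\Phi=L\,\lwh$.
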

	
	\begin{proof}
		(a) Existence is Lemma~\ref{lem:integral}. If $\x,\y$ are $\w$-reduced and $\y=\lambda\ast\x$ with $\lambda=a/c$ in lowest terms, then $c^{q_0}\mid x_0$ and $c^{q_1}\mid x_1$, so $c=1$ by reducedness, and $a=\pm1$ by symmetry. The comparison of heights is Lemma~\ref{lem:integral}.
		(b) The first claim is clear. Injectivity away from $[1:0],[0:1]$ is Lemma~\ref{lem:fibres}(b); each of these two points has at most the two integral preimages $[\pm1:0]_{\w}$, respectively $[0:\pm1]_{\w}$. Signs can be absorbed because $q_0$ or $q_1$ is odd.
		(c) Up to a factor at most $2$, the first count is the number of coprime pairs with $|x_0|\le T^{q_0}$ and $|x_1|\le T^{q_1}$, which is $\asymp T^{q_0+q_1}$ by M\"obius inversion. The second is the number of $y\in\P^1(\Q)$ with $H(y)\le T^L$, which is $\asymp T^{2L}$.
	\end{proof}
	
	In the language of Darmon \cite{DG1995}, the points $\Phi(P)$ with $P$ integral are the integral points of the orbifold $(\P^1,(1-\tfrac1{q_1})[0]+(1-\tfrac1{q_0})[\infty])$. Equivalently, the coprime-coordinate condition describes the integral weighted orbits associated with the corresponding stacky curve; we use this interpretation only as motivation and continue to work with the explicit orbit-set definition above. The number $(q_0+q_1)/L=\tfrac1{q_0}+\tfrac1{q_1}$ is minus the degree of $K_{\P^1}+(1-\tfrac1{q_1})[0]+(1-\tfrac1{q_0})[\infty]$. We call $\wh_{\Z}$ the \emph{integral weighted height}; heights of this form arise for weighted projective stacks \cite{Darda2021,ESZB2023}.
	
	\begin{rem}\label{rem:heuristic}
		Fix $\alpha$ with $\Phi(\alpha)=[b:1]$, $b\in\R^*$. By Lemma~\ref{lem:integralpoints}(c), the integral points with $\lwh(P)\approx\log T$ and $\Phi(P)$ near $b$ give about $T^{q_0+q_1}$ fractions of height about $T^L$. If these were randomly distributed, the closest to $b$ would be at distance about $T^{-(q_0+q_1)}$, that is, $-\log\delta_\infty(P,\alpha)\approx(q_0+q_1)\lwh(P)$. The same heuristic for all rational points gives the exponent $2L$ of Theorem~\ref{thm:wroth}. Since $q_0+q_1<2L$ unless $\w=(1,1)$, Theorem~\ref{thm:wroth} should be far from optimal on integral points.
	\end{rem}
	
	\begin{conj}\label{conj:integral}
		Let $\alpha\in\P^1_{\w}(\bQ)$ with $\Phi(\alpha)\notin\{[0:1],[1:0]\}$ and $\epsilon>0$. Then
		\[
		-\log\delta_\infty(P,\alpha)\le(q_0+q_1+\epsilon)\,\lwh(P)+O(1)
		\]
		for all $P\in\P^1_{\w}(\Z)$ with $\Phi(P)\neq\Phi(\alpha)$. Equivalently, $|F_\alpha(\x)|\ge C\,|\x|_{\w}^{L-q_0-q_1-\epsilon}$ for all coprime $\x\in\Z^2$ with $F_\alpha(\x)\neq0$.
	\end{conj}
	
	The two formulations agree because $-\log\delta_\infty(P,\alpha)=-\log|F_\alpha(\x)|+L\log|\x|_{\w}+O(1)$ and $\lwh(P)=\log|\x|_{\w}$ on integral points. The hypothesis on $\Phi(\alpha)$ is necessary: for $\alpha=[0:1]_{\w}$ and $P=[1:x_1]_{\w}$ one finds $-\log\delta_\infty(P,\alpha)=L\,\lwh(P)$, and $L>q_0+q_1$ as soon as $q_0,q_1\ge2$. For $\w=(1,1)$ the conjecture is Roth's theorem. If $\Phi(\alpha)$ is rational and $\min\{q_0,q_1\}=1$ it is trivial, because then $F_\alpha(\x)$ is a fixed multiple of a nonzero integer and $L=\max\{q_0,q_1\}<q_0+q_1$. For the arithmetic applications we also need a radical form of the conjecture. For a squarefree binary form $G\in\Z[U,V]$ we write $Z(G)\subset\P^1(\bQ)$ for its set of zeros.
	
	\begin{conj}[Radical form]\label{conj:radical}
		Let $G\in\Z[U,V]$ be a squarefree binary form of degree $d\ge1$ with $G(0,1)G(1,0)\neq0$, and let $\epsilon>0$. There is $C=C(\w,G,\epsilon)>0$ such that
		\[
		\rad\bigl(G(x_0^{q_1},x_1^{q_0})\bigr)\ge C\,|\x|_{\w}^{dL-q_0-q_1-\epsilon}
		\]
		for all coprime $\x\in\Z^2$ with $x_0x_1\,G(x_0^{q_1},x_1^{q_0})\neq0$.
	\end{conj}
	
	For $\w=(1,1)$ this is the radical form of the Thue--Siegel--Roth inequality for binary forms, which follows from $abc$ (Lemma~\ref{lem:radical} below; see also \cite{Granville1998}). The next lemma converts lower bounds for radicals into lower bounds for $S$-adic distances.
	
	\begin{lem}\label{lem:deduction}
		Let $G\in\Z[U,V]$ be squarefree of degree $d$ with $G(0,1)G(1,0)\neq0$, let $S\subset M_{\Q}$ be finite with $\infty\in S$, and for $v\in S$ let $A_v\subset\P^1_{\w}(\bQ)$ be a finite set on which $\Phi$ is injective, with $\Phi(A_v)\subset Z(G)$. There is $C>0$ such that
		\[
		\prod_{v\in S}\prod_{\alpha\in A_v}\delta_v(P,\alpha)\ \ge\ C\,\rad\bigl(G(u,v)\bigr)\,H(\Phi(P))^{-d}
		\]
		for all $P\in\P^1_{\w}(\Q)$ with $\Phi(P)=[u:v]$, $u,v$ coprime integers and $G(u,v)\neq0$. Consequently, if Conjecture~\ref{conj:radical} holds for $\w$ and $G$, then for every $\epsilon>0$
		\[
		\sum_{v\in S}\sum_{\alpha\in A_v}-\log\delta_v(P,\alpha)\le(q_0+q_1+\epsilon)\,\lwh(P)+O(1)
		\]
		for all $P\in\P^1_{\w}(\Z)$ with $\Phi(P)\notin Z(G)$. In particular Conjecture~\ref{conj:integral} holds for every $\alpha$ with $\Phi(\alpha)\in Z(G)$.
	\end{lem}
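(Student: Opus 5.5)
Pass to the image $y=\Phi(P)=[u:v]$. By Lemma~\ref{lem:chordal}, $\delta_v(P,\alpha)=\delta_v(y,\beta)$ with $\beta=\Phi(\alpha)$, and $\Phi$ is injective on $A_v$. So the left-hand side equals $\prod_{v\in S}\prod_{\beta\in B_v}\delta_v(y,\beta)$, where $B_v=\Phi(A_v)\subset Z(G)$ is a set of distinct zeros of $G$. Factor $G(U,V)=g_0\prod_{\beta\in Z(G)}(\beta_1U-\beta_0V)$ over $\bQ$, choosing representatives $(\beta_0,\beta_1)$ for each zero; this is possible because $G$ is squarefree and $G(0,1)G(1,0)\neq0$. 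Then at every place $v$ we have the identity
\[
\frac{\nm{G(u,v)}_v}{\max\{\nm u_v,\nm v_v\}^d}=c_v\prod_{\beta\in Z(G)}\delta_v(y,\beta),
\]
with $c_v$ depending only on $G$ and $v$, and $c_v=1$ for all but finitely many $v$ by Gauss's lemma for the nonarchimedean places of good reduction. Every factor $\delta_v(y,\beta)$ is at most $1$ up to a bounded constant. Hence for each $v\in S$ we get $\prod_{\beta\in B_v}\delta_v(y,\beta)\gg\nm{G(u,v)}_v/\max\{\nm u_v,\nm v_v\}^d$; if $B_v$ is a proper subset, the missing factors are bounded above.

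Next compute the product over $v\in S$ with $u,v$ coprime integers. At finite $p$ the denominators equal $1$, so we get $\prod_{p\in S}|G(u,v)|_p$. At $\infty$ the denominator is $H(y)^{d}$. Since $G(u,v)$ is a nonzero integer, the product formula gives $|G(u,v)|_\infty\prod_{p\in S}|G(u,v)|_p=\prod_{p\notin S}|G(u,v)|_p^{-1}$. This is the $S$-free part of $G(u,v)$, which is at least the $S$-free part of the radical. Since $S$ is finite, $\prod_{p\notin S,\,p\mid G(u,v)}p\ge\rad(G(u,v))/\prod_{p\in S\setminus\{\infty\}}p$. Combining these gives the first inequality with an explicit $C$.

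For the consequence, take $P$ integral with coprime representative $\x$. By Lemma~\ref{lem:integralpoints}(b) we have $[u:v]=[x_0^{q_1}:x_1^{q_0}]$ with coprime entries, and by Lemma~\ref{lem:integral} $H(\Phi(P))=|\x|_{\w}^L$ with $\lwh(P)=\log|\x|_{\w}$ up to $O(1)$. If $x_0x_1\neq0$, Conjecture~\ref{conj:radical} yields $\rad(G(u,v))\ge C|\x|_{\w}^{dL-q_0-q_1-\epsilon}$. Substituting, the product is $\gg|\x|_{\w}^{-(q_0+q_1+\epsilon)}$, and taking $-\log$ gives the claim. Note that the sign ambiguity $\pm u,\pm v$ is harmless.

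The degenerate points with $x_0x_1=0$ are $\Phi(P)\in\{[1:0],[0:1]\}$. Only finitely many integral $P$ arise there, namely $[\pm1:0]_{\w}$ and $[0:\pm1]_{\w}$, and $\Phi(P)\notin Z(G)$ by hypothesis on $G$. So these contribute a bounded amount and can be absorbed into $O(1)$. The final sentence of the lemma follows by taking $S=\{\infty\}$ and $A_\infty=\{\alpha\}$. The main point requiring care is the place-by-place factorization identity with uniformly controlled constants $c_v$. The cleanest way to handle this is to write $\nm{G}_v$ via Gauss's lemma over a splitting field, so that $c_v=1$ off a finite set. Since only $v\in S$ is used, however, it suffices to bound $c_v$ for those finitely many places, which is straightforward.
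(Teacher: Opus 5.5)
Your proposal is correct and follows essentially the same route as the paper. You factor $G$ over $\bQ$ so that $\prod_{v\in S}|G(u,v)|_v$ is a constant times $H(y)^d\prod_{v\in S}\prod_{\beta\in Z(G)}\delta_v(y,\beta)$, bound the factors not in $\Phi(A_v)$ by a constant, compare $\prod_{v\in S}|G(u,v)|_v$ with $\rad(G(u,v))/\prod_{p\in S\setminus\{\infty\}}p$, and then insert Conjecture~\ref{conj:radical} using $H(\Phi(P))=|\x|_{\w}^L$, treating $x_0x_1=0$ separately. One small point for the final sentence: the finitely many integral $P$ with $\Phi(P)\in Z(G)\setminus\{\Phi(\alpha)\}$ also need to be absorbed into the constant, which is immediate and which the paper leaves implicit as well.
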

	
	\begin{proof}
		Put $y=\Phi(P)$ and $S_f=S\setminus\{\infty\}$. For a nonzero integer $m$ one has $\rad(m)\le\prod_{p\in S_f}p\cdot\prod_{v\in S}|m|_v$. Factor $G=g_0\prod_{\beta\in Z(G)}(\beta_1U-\beta_0V)$ over $\bQ$. Since $\max\{|u|_p,|v|_p\}=1$ for finite $p$ and $\max\{|u|,|v|\}=H(y)$, we get
		\[
		\prod_{v\in S}|G(u,v)|_v=C_G\,H(y)^{d}\prod_{v\in S}\prod_{\beta\in Z(G)}\delta_v(y,\beta),
		\]
		where $C_G>0$ does not depend on $y$. Every factor $\delta_v(y,\beta)$ is at most $2$, and for each $v$ the points $\Phi(\alpha)$, $\alpha\in A_v$, are distinct elements of $Z(G)$. Hence $\prod_{v}\prod_{\alpha\in A_v}\delta_v(y,\Phi(\alpha))$ is at least a positive constant times $\prod_v\prod_{\beta}\delta_v(y,\beta)$, and the first inequality follows from Lemma~\ref{lem:chordal}. For the second, let $P$ be integral with coprime representative $\x$. The finitely many $P$ with $x_0x_1=0$ only affect the constant. Otherwise $H(y)=|\x|_{\w}^L$ and $\lwh(P)=\log|\x|_{\w}$, so Conjecture~\ref{conj:radical} and the first inequality give $\prod_{v\in S}\prod_{\alpha\in A_v}\delta_v(P,\alpha)\ge C'|\x|_{\w}^{-(q_0+q_1)-\epsilon}$.
	\end{proof}
	
	We now prove the conjectures under the $abc$ conjecture and show that the exponent cannot be lowered in general. We then examine which weighted forms of $abc$ suffice, prove unconditional analogues for constant targets over characteristic-zero function fields and for almost all real targets, and explain why a general unconditional proof over $\Q$ is out of reach.
	
	\subsection{The conjectures follow from \texorpdfstring{$abc$}{abc}}
	
	The key input is the following consequence of $abc$, which goes back to Elkies \cite{Elkies1991} and appears in various forms in work of Granville \cite{Granville1998} and van Frankenhuijsen \cite{vF2007}; we include the short proof. We use the following prescribed-set form of Belyi's theorem: if $X$ is a smooth projective curve over a number field $k$ and $D$ is a finite set of closed points of $X$, then there is a finite morphism $f:X\to\P^1_k$, ramified only over $\{0,1,\infty\}$, such that $D\subset f^{-1}(\{0,1,\infty\})$; see Belyi~\cite{Belyi1979} and \cite[Thm.~31.1]{Vojta2011}. In particular, when $X=\P^1_{\Q}$ and $D$ is the Galois-stable zero divisor of a form in $\Q[U,V]$, the map can be chosen over $\Q$.
	
	\begin{lem}\label{lem:radical}
		Assume the $abc$ conjecture. Let $\Psi\in\Z[U,V]$ be a binary form of degree $n$ without repeated factors, and let $\epsilon>0$. There is $c>0$ such that
		\[
		\rad\bigl(\Psi(u,v)\bigr)\ge c\max\{|u|,|v|\}^{n-2-\epsilon}
		\]
		for all coprime $(u,v)\in\Z^2$ with $\Psi(u,v)\neq0$.
	\end{lem}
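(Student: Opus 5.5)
We follow Elkies' argument. First reduce to the case where the zero set $Z(\Psi)\subset\P^1(\bQ)$ has at least three points. If $n\le 2$ the claim is trivial, since the right-hand side is $c\max\{|u|,|v|\}^{-\epsilon}\le c$ while $\rad\ge1$. In general we may also multiply $\Psi$ by extra coprime linear factors. If $\Psi'=\Psi\ell$, then $\rad(\Psi'(u,v))\le\rad(\Psi(u,v))\,|\ell(u,v)|\ll\rad(\Psi(u,v))\max\{|u|,|v|\}$, so the bound for $\Psi'$, of degree $n+1$, gives the bound for $\Psi$. Hence there is no loss in assuming $n\ge3$.

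Next apply the prescribed-set form of Belyi's theorem to $X=\P^1_\Q$ and the Galois-stable set $D=Z(\Psi)$. This gives $f\colon\P^1_\Q\to\P^1_\Q$ of degree $N$, defined over $\Q$, unramified outside $\{0,1,\infty\}$, with $Z(\Psi)\subset f^{-1}\{0,1,\infty\}$. Write $f=[A:B]$ with coprime forms $A,B\in\Z[U,V]$ of degree $N$, and put $C=A-B$. Then $A,B,C$ are pairwise coprime and their zero sets exhaust $f^{-1}\{0,1,\infty\}$.

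Riemann--Hurwitz gives $-2=-2N+\sum(e_P-1)$, with the sum taken over the three fibres. Hence the number of distinct points in $f^{-1}\{0,1,\infty\}$ is $3N-(2N-2)=N+2$. Equivalently, $\rad$-wise, the product $A B C$ has exactly $N+2$ distinct linear factors over $\bQ$. Let $R$ be the squarefree form with these factors. Then $\Psi\mid R$, so $R=\Psi\cdot\Theta$ with $\deg\Theta=N+2-n$.

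For coprime $(u,v)$ with $\Psi(u,v)\ne0$, set $a=A(u,v)$, $b=B(u,v)$, $c=C(u,v)$. Their pairwise gcds divide a fixed resultant, so after dividing by a bounded common factor we get a coprime $abc$ triple. Two inputs are needed.

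The first is a lower bound $\max\{|a|,|b|,|c|\}\gg\max\{|u|,|v|\}^N$. This holds because $A$ and $B$ have no common zero in $\P^1(\R)$.

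The second is an upper bound for $\rad(abc)$. This is bounded, up to a constant, by $\rad(R(u,v))$. Indeed, each prime dividing $A(u,v)$ but not the fixed resultant-type constant divides the value of the radical form; this is the standard fact that $\rad(F^e(u,v))$ and $\rad(F(u,v))$ agree up to primes dividing a fixed discriminant. Then
\[
\rad(R(u,v))\le\rad(\Psi(u,v))\,|\Theta(u,v)|\ll\rad(\Psi(u,v))\max\{|u|,|v|\}^{N+2-n}.
\]

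Applying $abc$ with $\epsilon'$ gives
\[
\max\{|u|,|v|\}^N\ll\rad(\Psi(u,v))^{1+\epsilon'}\max\{|u|,|v|\}^{(N+2-n)(1+\epsilon')}.
\]
Solving yields $\rad(\Psi(u,v))\gg\max\{|u|,|v|\}^{n-2-\epsilon}$ once $\epsilon'$ is small in terms of $N$, $n$ and $\epsilon$. Here $N$ depends only on $\Psi$.

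The main obstacle is the careful bookkeeping in the radical comparison. The bounded gcd of $a,b,c$, the primes dividing leading coefficients and resultants, and the factorization of $A$, $B$, $C$ over $\Z$ rather than over $\bQ$ must all contribute only bounded factors. This is handled by grouping the $\bQ$-linear factors into $\Q$-irreducible factors, using that $\rad(\prod_i F_i^{e_i})\le\rad(\prod_i F_i)$ for integer values, and using that $R$ is, up to a constant, the product of the distinct $\Q$-irreducible factors of $ABC$.
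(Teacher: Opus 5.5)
Your proposal is correct and follows the paper's proof essentially step for step. The shared steps are a Belyi map over $\Q$ with $Z(\Psi)\subset f^{-1}\{0,1,\infty\}$, the Riemann--Hurwitz count of $N+2$ points, $abc$ applied to the values after dividing out their bounded gcd, and the bound $\rad(R(u,v))\le\rad(\Psi(u,v))\,|\Theta(u,v)|$, where $R$ is the product of the distinct primitive irreducible factors of $ABC$. Two small remarks: your preliminary reduction to $n\ge3$ is unnecessary, and you should say explicitly, as the paper does, that the finitely many rational points of $f^{-1}\{0,1,\infty\}\setminus Z(\Psi)$ are absorbed into the constant, since there one of $a,b,c$ vanishes and so does $\Theta(u,v)$.
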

	
	\begin{proof}
		Let $D\subset\P^1(\bQ)$ be the zero set of $\Psi$. By the prescribed-set form of Belyi's theorem \cite[Thm.~31.1]{Vojta2011}, applied to the Galois-stable zero divisor $D$ of $\Psi$, there is a finite morphism $f\colon\P^1_{\Q}\to\P^1_{\Q}$ of some degree $N$, ramified only over $\{0,1,\infty\}$, with $D\subset E:=f^{-1}(\{0,1,\infty\})$. Write $f=[A:C]$ with coprime forms $A,C\in\Z[U,V]$ of degree $N$, and put $B=C-A$. By Riemann--Hurwitz, $E$ has exactly $N+2$ points. Let $R\in\Z[U,V]$ be the product of the distinct primitive irreducible factors of $ABC$, so that $\deg R=N+2$. Since $\Psi$ is squarefree with zeros in $E$, we can write $R=\Psi_0\Psi'$, where $\Psi_0$ is the primitive part of $\Psi$ and $\deg\Psi'=N+2-n$.
		
		The finitely many $y\in E(\Q)\setminus D$ have finitely many coprime representatives, and they can be absorbed into $c$. So let $(u,v)$ be coprime with $y=[u:v]\notin E$, and write $M=\max\{|u|,|v|\}$. There is a nonzero integer $\rho$, depending on the resultant of $A$ and $C$, such that $g:=\gcd(A(u,v),C(u,v))$ divides $\rho$. Moreover $\max\{|A(u,v)|,|C(u,v)|\}\ge c_0M^N$, because $A$ and $C$ have no common zero. Applying $abc$ to the coprime triple $A(u,v)/g+B(u,v)/g=C(u,v)/g$ gives
		\[
		c_0M^N\le|\rho|\,K_\epsilon\,\rad\bigl(A(u,v)B(u,v)C(u,v)\bigr)^{1+\epsilon}.
		\]
		Every prime dividing $ABC(u,v)$ divides the content of $ABC$ or $R(u,v)$, and
		\[
		\rad(R(u,v))\le\rad(\Psi(u,v))\,|\Psi'(u,v)|\le c_1\rad(\Psi(u,v))\,M^{N+2-n}.
		\]
		Hence $M^{N}\ll\bigl(\rad(\Psi(u,v))\,M^{N+2-n}\bigr)^{1+\epsilon}$, which gives $\rad(\Psi(u,v))\gg M^{n-2-\epsilon N}$. Replacing $\epsilon$ by $\epsilon/N$ proves the lemma.
	\end{proof}
	
	\begin{thm}\label{thm:abcgeneral}
		Assume the $abc$ conjecture.
		\begin{enumerate}[(a)]
			\item Let $G\in\Z[U,V]$ be squarefree of degree $d$ with $G(0,1)G(1,0)\neq0$, and let $\epsilon>0$. There is $c>0$ such that
			\[
			\rad\bigl(G(u,v)\bigr)\ge c\,H(\Phi(P))^{d-\epsilon}\,\wh_{\Z}(P)^{-(q_0+q_1)}
			\]
			for all $P\in\P^1_{\w}(\Q)$ with $\Phi(P)=[u:v]$, $u,v$ coprime and $uv\,G(u,v)\neq0$. In particular Conjecture~\ref{conj:radical} holds for all $\w$ and $G$.
			\item Let $S\subset M_{\Q}$ be finite, and for $v\in S$ let $A_v\subset\P^1_{\w}(\bQ)$ be a finite set on which $\Phi$ is injective, with $\Phi(A_v)\cap\{[0:1],[1:0]\}=\emptyset$. Put $B=\bigcup_v\Phi(A_v)$. For every $\epsilon>0$ there is $c'\in\R$ such that
			\[
			\sum_{v\in S}\sum_{\alpha\in A_v}-\log\delta_v(P,\alpha)\le(q_0+q_1+\epsilon)\log\wh_{\Z}(P)+c'
			\]
			for all $P\in\P^1_{\w}(\Q)$ with $\Phi(P)\notin B$. For $P\in\P^1_{\w}(\Z)$ the right-hand side equals $(q_0+q_1+\epsilon)\lwh(P)+c'$; in particular Conjecture~\ref{conj:integral} holds.
		\end{enumerate}
	\end{thm}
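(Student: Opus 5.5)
For part (a), I apply Lemma~\ref{lem:radical} not to $G$ but to the enlarged form $\Psi=UV\,G(U,V)$. It is squarefree of degree $n=d+2$, since $G(0,1)G(1,0)\neq0$ means $U,V\nmid G$. Lemma~\ref{lem:radical} then gives
\[
\rad\bigl(uv\,G(u,v)\bigr)\ge c\,\max\{|u|,|v|\}^{d-\epsilon}=c\,H(\Phi(P))^{d-\epsilon}
\]
for coprime $(u,v)$ with $uvG(u,v)\neq0$. By submultiplicativity of the radical, $\rad(uvG(u,v))\le\rad(u)\rad(v)\rad(G(u,v))$, so it remains to bound $\rad(u)\rad(v)\le\wh_{\Z}(P)^{q_0+q_1}$.

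For that bound, let $\x=(x_0,x_1)$ be the $\w$-reduced representative of $P$ (Lemma~\ref{lem:integralpoints}(a)), and set $g=\gcd(x_0,x_1)$. Then $\Phi(P)=[x_0^{q_1}:x_1^{q_0}]$, and the coprime pair is $(u,v)=(x_0^{q_1}/e,\,x_1^{q_0}/e)$ up to sign, where $e=\gcd(x_0^{q_1},x_1^{q_0})$. Hence $u\mid x_0^{q_1}$ and $v\mid x_1^{q_0}$. This gives $\rad(u)\le\rad(x_0)\le|x_0|\le|\x|_{\w}^{q_0}$ and $\rad(v)\le|x_1|\le|\x|_{\w}^{q_1}$, which is the claimed inequality since $\wh_{\Z}(P)=|\x|_{\w}$. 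Combining the two displays proves (a).

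The \emph{in particular} clause follows by specializing to integral $P$. There $H(\Phi(P))=|\x|_{\w}^L$ and $\wh_{\Z}(P)=|\x|_{\w}$, so (a) reads $\rad\ge c|\x|_{\w}^{dL-q_0-q_1-L\epsilon}$. Rescaling $\epsilon$ gives Conjecture~\ref{conj:radical}.

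For part (b), I produce a single form $G\in\Z[U,V]$ and then reuse the argument of Lemma~\ref{lem:deduction} with $\wh_{\Z}$ in place of $\lwh$. The targets $B$ form a finite subset of $\P^1(\bQ)$ avoiding $0$ and $\infty$. I take $G$ to be the product of the primitive minimal forms over $\Z$ of the Galois orbits of the points of $B$. Then $G$ is squarefree, satisfies $G(0,1)G(1,0)\neq0$, and has $B\subset Z(G)$.

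The first inequality of Lemma~\ref{lem:deduction} holds for all $P\in\P^1_{\w}(\Q)$ with $\Phi(P)=[u:v]$ coprime and $G(u,v)\neq0$. It gives $\prod_{v\in S}\prod_{\alpha\in A_v}\delta_v(P,\alpha)\ge C\rad(G(u,v))H(\Phi(P))^{-d}$. Combining this with (a) yields
\[
\textstyle\prod_{v\in S}\prod_{\alpha\in A_v}\delta_v(P,\alpha)\ge C'\,H(\Phi(P))^{-\epsilon}\,\wh_{\Z}(P)^{-(q_0+q_1)}.
\]

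To remove the factor $H(\Phi(P))^{-\epsilon}$, note that $H(\Phi(P))\le|\x|_{\w}^L=\wh_{\Z}(P)^L$, because $\max\{|u|,|v|\}\le\max\{|x_0|^{q_1},|x_1|^{q_0}\}$. Thus $H(\Phi(P))^{-\epsilon}\ge\wh_{\Z}(P)^{-L\epsilon}$, and rescaling $\epsilon$ gives the stated bound.

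Several exceptional cases still need handling.
\begin{itemize}
\item Points with $uv=0$ satisfy $\Phi(P)\in\{[0:1],[1:0]\}$. There $\delta_v(P,\alpha)$ equals a fixed positive constant, since $\Phi(\alpha)\neq0,\infty$, so these only change $c'$.
\item Points with $\Phi(P)\in Z(G)\setminus B$ are finitely many rational images. At each of them every $\delta_v(P,\alpha)$ is a fixed positive number, and $\wh_{\Z}(P)\ge1$, so they too are absorbed into $c'$.
\end{itemize}
For integral $P$ we have $\log\wh_{\Z}(P)=\lwh(P)$ by Lemma~\ref{lem:integralpoints}(a), and taking $S=\{\infty\}$, $A_\infty=\{\alpha\}$ gives Conjecture~\ref{conj:integral}.

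The main obstacle is the radical bookkeeping for non-integral $P$: one must check that the coprime pair $(u,v)$ still has $\rad(u)\rad(v)\le|\x|_{\w}^{q_0+q_1}$, and that $H(\Phi(P))$ and $\wh_{\Z}(P)$ are compared in the right direction. As shown above, both reduce to the divisibilities $u\mid x_0^{q_1}$ and $v\mid x_1^{q_0}$.
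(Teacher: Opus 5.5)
Your proof is correct and follows the paper's argument essentially step for step: Lemma~\ref{lem:radical} applied to $UVG$ together with $\rad(u)\rad(v)\le|x_0x_1|\le\wh_{\Z}(P)^{q_0+q_1}$ gives (a), and Lemma~\ref{lem:deduction} for the Galois-orbit form $G$ combined with $H(\Phi(P))\le\wh_{\Z}(P)^L$ gives (b). Two small remarks: Lemma~\ref{lem:deduction} assumes $\infty\in S$, which you should arrange by adjoining $\infty$ with $A_\infty=\emptyset$; and your exceptional case $\Phi(P)\in Z(G)\setminus B$ is in fact empty, because a rational zero of $G$ is its own Galois orbit and therefore lies in $B$.
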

	
	\begin{proof}
		(a) Let $G_0$ be the primitive part of $G$. Since $U$ and $V$ do not divide $G$, the form $\Psi=UV\,G_0$ is squarefree of degree $d+2$, and Lemma~\ref{lem:radical} gives $\rad(\Psi(u,v))\ge c\,H(\Phi(P))^{d-\epsilon}$. Let $\x$ be a $\w$-reduced representative of $P$. Then $u\mid x_0^{q_1}$ and $v\mid x_1^{q_0}$, so $\rad(u)\rad(v)\le|x_0x_1|\le\wh_{\Z}(P)^{q_0+q_1}$, and therefore $\rad(\Psi(u,v))\le\wh_{\Z}(P)^{q_0+q_1}\rad(G(u,v))$. For integral $P$ one has $H(\Phi(P))=|\x|_{\w}^L$ and $\wh_{\Z}(P)=|\x|_{\w}$, which gives Conjecture~\ref{conj:radical} after replacing $\epsilon$ by $\epsilon/L$.
		
		(b) Adding the archimedean place with $A_\infty=\emptyset$ changes nothing, so we may assume $\infty\in S$. Let $G$ be the product of the primitive irreducible forms whose zero sets are the Galois orbits of the points of $B$; it is squarefree, and $G(0,1)G(1,0)\neq0$. If $\Phi(P)\in\{[0:1],[1:0]\}$, each $\delta_v(P,\alpha)=\delta_v(\Phi(P),\Phi(\alpha))$ is a fixed positive number, and since $\wh_{\Z}(P)\ge1$ such $P$ only affect the constant. Otherwise $G(u,v)\neq0$, because a rational zero of $G$ would lie in $B$. Lemma~\ref{lem:deduction} and part (a) give
		\[
		\prod_{v}\prod_{\alpha\in A_v}\delta_v(P,\alpha)\ge C\,H(\Phi(P))^{-\epsilon}\,\wh_{\Z}(P)^{-(q_0+q_1)}.
		\]
		Since $H(\Phi(P))=\exp(L\,\lwh(P))\le\wh_{\Z}(P)^L$, replacing $\epsilon$ by $\epsilon/L$ proves the inequality. The last assertion follows from Lemma~\ref{lem:integralpoints}(a).
	\end{proof}
	
	\begin{rem}\label{rem:abcgeneral}
		\begin{enumerate}[(a),leftmargin=*]
			\item The only point where the weights enter is the inequality $\rad(u)\rad(v)\le\wh_{\Z}(P)^{q_0+q_1}$: the numerator and denominator of an integral point are perfect powers, so their radicals are small. Roth's method does not see radicals, and this is why it gives $2L$ instead of $q_0+q_1$.
			\item For $\w=(1,1)$, Theorem~\ref{thm:abcgeneral} is the known implication from $abc$ to Roth's theorem over $\Q$ \cite{Bombieri1994,vF2002}.
			\item Theorem~\ref{thm:abcgeneral} is a special case of the radicalized form of Vojta's conjecture for curves, which follows from $abc$ \cite{vF2007}. For a stacky version of Vojta's conjecture that is equivalent to $abc$, see \cite{NX2025}. The point here is that, for integral points of $\P^1_{\w}$, these inequalities specialize to the explicit exponent $q_0+q_1$, which is sharp by Proposition~\ref{prop:danilov}.
		\end{enumerate}
	\end{rem}
	
	\subsection{A weighted \texorpdfstring{$abc$}{abc} conjecture}
	
	In Theorem~\ref{thm:abcgeneral} the $abc$ conjecture is applied to triples in which two terms are perfect powers. This suggests isolating the corresponding special case.
	
	\begin{defi}\label{def:wabc}
		Let $p,q\ge1$ be integers and $A,B$ nonzero integers. The \emph{weighted $abc$ conjecture} $\mathrm{wABC}(p,q;A,B)$ asserts that for every $\epsilon>0$ there is $K$ such that
		\[
		\max\{|x|^p,|y|^q\}\le K\bigl(|xy|\,\rad(Ax^p+By^q)\bigr)^{1+\epsilon}
		\]
		for all coprime integers $x,y$ with $xy(Ax^p+By^q)\neq0$.
	\end{defi}
	
	Compared with $abc$ for the triple $(Ax^p,By^q,Ax^p+By^q)$, the perfect powers contribute $|x|$ and $|y|$ instead of $\rad(x^p)$ and $\rad(y^q)$, which is the convention for stacky points of orders $p$ and $q$. In the range $|x|^3\asymp|y|^2$, the statement $\mathrm{wABC}(3,2;1,-1)$ is equivalent to the radical form $\rad(x^3-y^2)\gg_\epsilon|x|^{1/2-\epsilon}$ of Hall's conjecture, which Langevin derived from $abc$; see \cite[\S2]{Waldschmidt2004}. A uniform version for perfect powers without radicals, $|x^p-y^q|\gg_\epsilon\max\{x^p,y^q\}^{1-1/p-1/q-\epsilon}$, is stated in \cite[Conj.~2.6]{Waldschmidt2004}, where it is derived from $abc$.
	
	\begin{prop}\label{prop:abcwabc}
		The $abc$ conjecture implies $\mathrm{wABC}(p,q;A,B)$ for all $p,q,A,B$.
	\end{prop}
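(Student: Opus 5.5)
The plan is to apply $abc$ directly to the triple $a=Ax^p$, $b=By^q$, $c=a+b$, after removing the bounded common factor. Let $x,y$ be coprime with $xy(Ax^p+By^q)\neq0$, and put $g=\gcd(Ax^p,By^q)$. Since $\gcd(x,y)=1$, any prime $\ell$ dividing both $Ax^p$ and $By^q$ cannot divide both $x$ and $y$. Checking the two cases, $\ell\mid x$ forces $\ell\mid B$, and $\ell\nmid x$ forces $\ell\mid A$. One also has to bound the exponent. If $\ell\mid x$, then $v_\ell(g)\le v_\ell(By^q)=v_\ell(B)$; if $\ell\nmid x$, then $v_\ell(g)\le v_\ell(A)$. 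Hence $g\mid AB$, so $g\le|AB|$. The triple $Ax^p/g+By^q/g=(Ax^p+By^q)/g$ is coprime, and none of its terms vanishes.

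Next I apply $abc$ to this coprime triple:
\[
\max\{|Ax^p|,|By^q|,|c|\}\le g\,K_\epsilon\,\rad\Bigl(\tfrac{Ax^p}{g}\cdot\tfrac{By^q}{g}\cdot\tfrac{c}{g}\Bigr)^{1+\epsilon}.
\]
The radical on the right divides $\rad(AB)\,\rad(x)\,\rad(y)\,\rad(c)$, which is at most $|AB|\,|xy|\,\rad(Ax^p+By^q)$. Using $\rad(x)\le|x|$ here is exactly the weakening that defines $\mathrm{wABC}$. On the left, $\max\{|x|^p,|y|^q\}\le\max\{|Ax^p|,|By^q|\}$ because $|A|,|B|\ge1$. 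Combining these gives
\[
\max\{|x|^p,|y|^q\}\le|AB|\,K_\epsilon\,|AB|^{1+\epsilon}\bigl(|xy|\rad(Ax^p+By^q)\bigr)^{1+\epsilon},
\]
which is $\mathrm{wABC}(p,q;A,B)$ with $K=K_\epsilon|AB|^{2+\epsilon}$.

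The only delicate step is the gcd bound $g\mid AB$, which is needed so that the constant is uniform in $x,y$. It rests on the coprimality of $x$ and $y$, and I would check it prime by prime as above. Everything else is bookkeeping of radicals and constants.
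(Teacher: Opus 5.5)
Your proof is correct and follows the paper's argument step for step. You show $g=\gcd(Ax^p,By^q)$ divides $AB$ prime by prime using $\gcd(x,y)=1$, apply $abc$ to the reduced coprime triple, and replace $\rad(x^p)\rad(y^q)$ by $|xy|$. The only cosmetic difference is that you also bound $\rad(AB)\le|AB|$, getting $K=K_\epsilon|AB|^{2+\epsilon}$, whereas the paper keeps $\rad(AB)$ inside the bracket.
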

	
	\begin{proof}
		Let $x,y$ be coprime with $xy\neq0$ and $c=Ax^p+By^q\neq0$, and let $g=\gcd(Ax^p,By^q)$. A prime $\ell\mid g$ divides $AB$, since otherwise it would divide $x$ and $y$. If $\ell\nmid x$ then $v_\ell(g)\le v_\ell(A)$, and if $\ell\nmid y$ then $v_\ell(g)\le v_\ell(B)$; hence $g\mid AB$. Applying $abc$ to $Ax^p/g+By^q/g=c/g$ gives
		\[
		\max\{|x|^p,|y|^q\}\le\max\{|Ax^p|,|By^q|\}\le|AB|\,K_\epsilon\bigl(\rad(AB)\,|xy|\,\rad(c)\bigr)^{1+\epsilon}.\qedhere
		\]
	\end{proof}
	
	A weighted $abc$ conjecture for a single signature suffices for rational targets and, more generally, for targets whose powers are rational.
	
	\begin{prop}\label{prop:kummer}
		Let $b\in\bQ^*$ and $n\ge1$ with $b^n=r_0/r_1$, where $r_0,r_1$ are nonzero coprime integers, and put $G_n=r_1U^n-r_0V^n$. If $\mathrm{wABC}(nq_1,nq_0;r_1,-r_0)$ holds, then Conjecture~\ref{conj:radical} holds for $\w$ and $G_n$. Consequently, the $S$-adic inequality of Lemma~\ref{lem:deduction} and Conjecture~\ref{conj:integral} hold for every $\alpha$ with $\Phi(\alpha)=[\zeta b:1]$, $\zeta^n=1$. For $n=1$ this covers all rational targets other than $[0:1]$ and $[1:0]$.
	\end{prop}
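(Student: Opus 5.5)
We have to prove Proposition~\ref{prop:kummer}. The plan is to apply $\mathrm{wABC}(nq_1,nq_0;r_1,-r_0)$ directly to the coprime pair $\x$. The key observation is that $G_n(x_0^{q_1},x_1^{q_0})=r_1x_0^{nq_1}-r_0x_1^{nq_0}$. Three preliminary checks come first:
\begin{itemize}
\item $G_n$ is squarefree of degree $d=n$, because its zeros $[\zeta b:1]$ with $\zeta^n=1$ are distinct.
\item $G_n(0,1)G_n(1,0)=-r_0r_1\neq0$.
\item Once the radical inequality is established, Lemma~\ref{lem:deduction} applied to $G_n$ gives the $S$-adic inequality and Conjecture~\ref{conj:integral} for every $\alpha$ with $\Phi(\alpha)\in Z(G_n)=\{[\zeta b:1]:\zeta^n=1\}$.
\end{itemize}

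For the radical inequality, take coprime $\x$ with $x_0x_1G_n(x_0^{q_1},x_1^{q_0})\neq0$ and apply wABC with $x=x_0$, $y=x_1$, $p=nq_1$, $q=nq_0$. This gives
\[
\max\{|x_0|^{nq_1},|x_1|^{nq_0}\}\le K\bigl(|x_0x_1|\,\rad(G_n(x_0^{q_1},x_1^{q_0}))\bigr)^{1+\epsilon}.
\]
The left side equals $|\x|_{\w}^{nL}$, since $|x_i|^{nq_{1-i}}=(|x_i|^{1/q_i})^{nL}$ and $L=q_0q_1$. On the right, $|x_0x_1|\le|\x|_{\w}^{q_0+q_1}$. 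Rearranging,
\[
\rad(G_n(x_0^{q_1},x_1^{q_0}))\ge K^{-1/(1+\epsilon)}\,|\x|_{\w}^{nL/(1+\epsilon)-(q_0+q_1)}\ge C\,|\x|_{\w}^{nL-q_0-q_1-\epsilon nL}.
\]
Replacing $\epsilon$ by $\epsilon/(nL)$ yields Conjecture~\ref{conj:radical} for $\w$ and $G_n$ with $d=n$.

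For the final sentence, take $n=1$. Any rational target $[b:1]$ with $b\in\Q^*$ can be written $b=r_0/r_1$ in lowest terms, so it is covered. The excluded points $[0:1]$ and $[1:0]$ correspond to $b=0$ and $b=\infty$.

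There is no real obstacle. The only point needing care is the bookkeeping that makes the exponents match: the perfect powers $x_0^{nq_1}$ and $x_1^{nq_0}$ both have size $|\x|_{\w}^{nL}$ at the extreme, while their contribution on the right side of wABC is only $|x_0x_1|\le|\x|_{\w}^{q_0+q_1}$, with no $n$.
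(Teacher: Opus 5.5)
Your proposal is correct and follows the paper's proof essentially line by line. Like the paper, you apply $\mathrm{wABC}(nq_1,nq_0;r_1,-r_0)$ to $(x_0,x_1)$, identify the left side with $|\x|_{\w}^{nL}$, bound $|x_0x_1|\le|\x|_{\w}^{q_0+q_1}$, rename $\epsilon$, and invoke Lemma~\ref{lem:deduction}; your explicit rearrangement via $1/(1+\epsilon)\ge1-\epsilon$ (which implicitly uses $|\x|_{\w}\ge1$) simply spells out what the paper abbreviates as ``after renaming $\epsilon$''.
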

	
	\begin{proof}
		The form $G_n$ has the $n$ distinct zeros $[\zeta b:1]$, so it is squarefree, and $G_n(0,1)G_n(1,0)=-r_0r_1\neq0$. For coprime $\x$ with $x_0x_1G_n(x_0^{q_1},x_1^{q_0})\neq0$, the hypothesis applied to $(x,y)=(x_0,x_1)$ gives
		\[
		|\x|_{\w}^{nL}=\max\{|x_0|^{nq_1},|x_1|^{nq_0}\}\le K\bigl(|x_0x_1|\,\rad(G_n(x_0^{q_1},x_1^{q_0}))\bigr)^{1+\epsilon}.
		\]
		Since $|x_0x_1|\le|\x|_{\w}^{q_0+q_1}$, this yields $\rad(G_n(x_0^{q_1},x_1^{q_0}))\gg|\x|_{\w}^{nL-q_0-q_1-\epsilon nL}$, which is Conjecture~\ref{conj:radical} for $G_n$ after renaming $\epsilon$. The rest follows from Lemma~\ref{lem:deduction}.
	\end{proof}
	
	For a general algebraic target $b$ the proof of Theorem~\ref{thm:abcgeneral} passes through a Belyi map, and its values at integral points are no longer perfect-power triples. The next subsection shows how to restore this structure.
	
	\subsection{Weighted Belyi maps}
	
	\begin{defi}\label{def:wbelyi}
		Let $m_0,m_1\ge2$ be integers and $S\subset\P^1(\bQ)$ a finite set. A \emph{weighted Belyi map of signature $(m_0,m_1)$ for $S$} is a finite morphism $f\colon\P^1_{\Q}\to\P^1_{\Q}$, unramified outside $\{0,1,\infty\}$, such that every point of $f^{-1}(0)$ has ramification index $m_0$, every point of $f^{-1}(1)$ has ramification index $m_1$, and $S\subset f^{-1}(\infty)$.
	\end{defi}
	
	We apply this with $S=\{[0:1],[1:0]\}\cup Z(G)$. The stacky points of $\P^1_{\w}$ are thus placed over the cusp $\infty$, where their weights enter through the inequality $\rad(u)\rad(v)\le|x_0x_1|$. The uniform ramification over $0$ and $1$ makes the values of $f$ at rational points into triples consisting of a cube, a square and a third integer, to which $\mathrm{wABC}(3,2;\cdot,\cdot)$ applies.
	
	\begin{thm}\label{thm:wbelyiexists}
		Let
		\[
		J(\lambda)=\frac{4(\lambda^2-\lambda+1)^3}{27\lambda^2(\lambda-1)^2}.
		\]
		If $g\colon\P^1_{\Q}\to\P^1_{\Q}$ is unramified outside $\{0,1,\infty\}$ and $S\subset g^{-1}(\{0,1,\infty\})$, then $J\circ g$ is a weighted Belyi map of signature $(3,2)$ for $S$. In particular such maps exist for every finite $S\subset\P^1(\bQ)$.
	\end{thm}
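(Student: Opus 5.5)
The plan is to first show that $J$ itself is a weighted Belyi map of signature $(3,2)$ for $S_0=\{0,1,\infty\}$, and then to show that composing with $g$ preserves all the required properties.

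\emph{Step 1: the map $J$.} $J$ has degree $6$. Its numerator $4(\lambda^2-\lambda+1)^3$ shows that $J^{-1}(0)$ consists of the two roots of $\lambda^2-\lambda+1$ (primitive sixth roots of unity), each with ramification index $3$. For the fibre over $1$, I will verify the identity
\[
4(\lambda^2-\lambda+1)^3-27\lambda^2(\lambda-1)^2=(\lambda+1)^2(2\lambda-1)^2(\lambda-2)^2,
\]
by comparing leading coefficient ($4$) and checking the roots $-1,\tfrac12,2$; expanding both sides is a routine calculation. Thus $J^{-1}(1)=\{-1,\tfrac12,2\}$, each point with index $2$. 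Over $\infty$ the fibre is $\{0,1,\infty\}$, each with index $2$; at $\infty$ this holds because $\deg$ numerator minus $\deg$ denominator is $6-4=2$.

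Riemann--Hurwitz then finishes Step 1. The total ramification is $2\cdot6-2=10$, and the three fibres already contribute $2\cdot2+3\cdot1+3\cdot1=10$. Hence $J$ is unramified outside $\{0,1,\infty\}$, and $\{0,1,\infty\}\subset J^{-1}(\infty)$. (Equivalently, $J$ is $j/1728$ in the Legendre parameter.)

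\emph{Step 2: composition.} Let $f=J\circ g$. Ramification indices multiply: for a point $P$, $e_f(P)=e_g(P)\,e_J(g(P))$. Branch points of a composition lie in $J(\text{branch}(g))\cup\text{branch}(J)\subset J(\{0,1,\infty\})\cup\{0,1,\infty\}=\{0,1,\infty\}$, since $J(\{0,1,\infty\})=\{\infty\}$. So $f$ is unramified outside $\{0,1,\infty\}$.

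If $f(P)=0$, then $g(P)$ is a root of $\lambda^2-\lambda+1$, which lies outside $\{0,1,\infty\}$. So $g$ is unramified at $P$ and $e_f(P)=1\cdot3=3$. Likewise, if $f(P)=1$, then $g(P)\in\{-1,\tfrac12,2\}$ and $e_f(P)=2$. Finally, $S\subset g^{-1}(\{0,1,\infty\})\subset g^{-1}(J^{-1}(\infty))=f^{-1}(\infty)$. Since $g$ and $J$ are defined over $\Q$, so is $f$.

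\emph{Step 3: existence.} For the last assertion, given a finite $S\subset\P^1(\bQ)$, take $D$ to be the union of the Galois orbits of $S$. The prescribed-set form of Belyi's theorem, as quoted before Lemma~\ref{lem:radical}, supplies $g$ over $\Q$ with $D\subset g^{-1}(\{0,1,\infty\})$; then apply Step 2.

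The only genuine computation, and the one real risk of error, is the polynomial identity in Step 1. Everything else is bookkeeping with multiplicativity of ramification indices.
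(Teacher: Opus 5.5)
Your proposal is correct and follows the paper's proof essentially step for step. It uses the same identity $4(\lambda^2-\lambda+1)^3-27\lambda^2(\lambda-1)^2=\bigl((\lambda+1)(\lambda-2)(2\lambda-1)\bigr)^2$ to read off the fibres over $0,1,\infty$, the same Riemann--Hurwitz count $10=4+3+3$, multiplicativity of ramification indices for $J\circ g$, and the prescribed-set Belyi theorem applied to the Galois closure of $S$. One small point: checking that the left side vanishes at $-1,\tfrac12,2$ and has leading coefficient $4$ does not by itself give double roots, so you do need either the full expansion or the observation that the derivative $(2\lambda-1)\bigl(12(\lambda^2-\lambda+1)^2-54\lambda(\lambda-1)\bigr)$ also vanishes at those three points.
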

	
	\begin{proof}
		Put $\varphi(\lambda)=\lambda^2-\lambda+1$, $\psi(\lambda)=(\lambda+1)(\lambda-2)(2\lambda-1)$ and $\omega(\lambda)=\lambda^2(\lambda-1)^2$. One checks the identity $4\varphi^3-\psi^2=27\omega$, so that $J=4\varphi^3/(27\omega)$ and $J-1=\psi^2/(27\omega)$. Hence $J$ has degree $6$, and its fibres are as follows:
		\begin{itemize}
			\item over $0$, the two zeros of $\varphi$, each with ramification index $3$;
			\item over $1$, the points $-1,\tfrac12,2$, each with index $2$;
			\item over $\infty$, the points $0,1,\infty$, each with index $2$.
		\end{itemize}
		Since $2\cdot6-2=4+3+3$, Riemann--Hurwitz shows that $J$ is unramified elsewhere. Now let $f=J\circ g$. It is unramified outside $J(\{0,1,\infty\})\cup\{0,1,\infty\}=\{0,1,\infty\}$. If $f(z)=0$, then $g(z)$ is a zero of $\varphi$, which does not lie in $\{0,1,\infty\}$, so $g$ is unramified at $z$ and $e_z(f)=3$. Likewise $e_z(f)=2$ for $z\in f^{-1}(1)$. Finally $f^{-1}(\infty)=g^{-1}(\{0,1,\infty\})\supset S$. For existence, replace $S$ by the finite Galois-stable set of all its conjugates. The prescribed-set form of Belyi's theorem \cite[Thm.~31.1]{Vojta2011} gives a map $g$ over $\Q$ with $S\subset g^{-1}(\{0,1,\infty\})$; the composition $J\circ g$ then has all points of $S$ over $\infty$, since $J(0)=J(1)=J(\infty)=\infty$.
	\end{proof}
	
	The function $J$ is $j/1728$ written in terms of the Legendre parameter $\lambda$, so the maps of Theorem~\ref{thm:wbelyiexists} are Belyi maps followed by the $j$-invariant.
	
	\begin{thm}\label{thm:wbelyi}
		Let $G\in\Z[U,V]$ be squarefree of degree $d\ge1$ with $G(0,1)G(1,0)\neq0$. There is a finite set $E_G$ of positive integers, depending only on $G$, with the following property. If $\mathrm{wABC}(3,2;4e,-1)$ holds for every $e\in E_G$, then Conjecture~\ref{conj:radical} holds for $G$ and every coprime weight vector $\w$, and hence so do the conclusions of Lemma~\ref{lem:deduction}.
	\end{thm}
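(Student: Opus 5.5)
We mimic the proof of Lemma~\ref{lem:radical} and Theorem~\ref{thm:abcgeneral}(a), replacing the Belyi map by a weighted Belyi map of signature $(3,2)$ from Theorem~\ref{thm:wbelyiexists}. We take $S=\{[0:1],[1:0]\}\cup Z(G)$, closed under Galois conjugation. This gives $f=J\circ g\colon\P^1_\Q\to\P^1_\Q$ of degree $N=6\deg g$ with $S\subset f^{-1}(\infty)$.

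Since every point of $f^{-1}(0)$ has index $3$ and every point of $f^{-1}(1)$ has index $2$, we can write $f=[A:C]$ with coprime forms in $\Z[U,V]$ of degree $N$. The fibre structure gives
\[
A=a\,\Phi_0^3,\qquad C-A=-b\,\Phi_1^2
\]
with primitive squarefree forms $\Phi_0,\Phi_1\in\Z[U,V]$ of degrees $N/3$ and $N/2$ and nonzero integers $a,b$. Over $\infty$, the form $C$ is divisible by $UV\,G_0$, where $G_0$ is the primitive part of $G$. By Riemann--Hurwitz, $\#f^{-1}(\infty)=N+2-N/3-N/2=N/6+2$, so $\rad$ of $C$ at integral points is controlled by $UV G_0$ times a cofactor $\Psi'$ of degree $N/6+2-d-2=N/6-d$.

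The identity $A+(B)=C$ then reads $a\Phi_0^3-b\Phi_1^2=C$. We rescale to the shape $4e\,X^3-Y^2$. Multiplying by a suitable integer, for instance $b\cdot$(square), turns it into $(4e)X^3-Y^2=c'C$ with $X=\lambda\Phi_0$, $Y=\mu\Phi_1$ and $e$ cubefree. Here $e$ depends only on $a,b$, hence only on $G$ (via the fixed choice of $g$). To be safe, $E_G$ is the finite set of $e$ arising this way; allowing a few choices, e.g.\ for several normalizations, keeps it finite.

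Now fix coprime $\x$ with $x_0x_1G(x_0^{q_1},x_1^{q_0})\ne0$, and put $(u,v)=(x_0^{q_1},x_1^{q_0})$, $M=\max(|u|,|v|)=|\x|_\w^L$. Let $X=\lambda\Phi_0(u,v)$ and $Y=\mu\Phi_1(u,v)$. Their gcd divides a fixed integer (a resultant), and the finitely many $(u,v)$ where $XY=0$ or $C(u,v)=0$ are absorbed into the constant. The main obstacle is that wABC requires coprime $x,y$. I would handle it by dividing out the bounded common factor $\gamma=\gcd(X,Y)$. After division, $4e\gamma X'^3-\gamma^{-1}\cdots$ changes the coefficient, so $E_G$ is enlarged to include $4e$ times the finitely many divisors of the resultant, adjusted to cubefree form. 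This keeps $E_G$ finite and dependent only on $G$.

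Applying $\mathrm{wABC}(3,2;4e,-1)$ gives
\[
M^N\ll\max(|X|^3,|Y|^2)\ll\bigl(|XY|\,\rad(C(u,v))\bigr)^{1+\epsilon},
\]
using $\max(|A|,|C|)\gg M^N$ as in Lemma~\ref{lem:radical}. Next, $|XY|\ll M^{N/3+N/2}$, and
\[
\rad(C(u,v))\ll\rad(u)\rad(v)\,\rad(G(u,v))\,M^{N/6-d}\le|x_0x_1|\,\rad(G(u,v))\,M^{N/6-d}.
\]
Hence
\[
M^N\ll\bigl(M^{N-d}\,|\x|_\w^{q_0+q_1}\,\rad(G(u,v))\bigr)^{1+\epsilon},
\]
so $\rad(G(u,v))\gg|\x|_\w^{dL-q_0-q_1-\epsilon'}$ after renaming $\epsilon$ (using $M=|\x|_\w^L$). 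This is Conjecture~\ref{conj:radical}. The set $E_G$ depends only on $f$, hence only on $G$, not on $\w$. The conclusions of Lemma~\ref{lem:deduction} then follow directly from that lemma.
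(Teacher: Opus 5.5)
Your proposal is correct and is essentially the paper's proof. The paper takes the explicit forms $X,Y,W$ with $4X^3-Y^2=27W$ coming from $J\circ g$, divides out $e=\gcd(\tilde x,\tilde y)$ (a divisor of $27h^3$) to obtain the coprime triple $4e\,x'^3-y'^2=27\tilde w/e^2$, and then runs your size and radical estimates, whereas you derive the same shape abstractly from the ramification data; one small correction is that after dividing by $\gamma$ you should not reduce $e\gamma$ to cubefree form, since absorbing a cube into $X'$ can destroy the coprimality required by $\mathrm{wABC}$, so simply put all the finitely many values $e\gamma$ into $E_G$.
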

	
	\begin{proof}
		Let $S=\{[0:1],[1:0]\}\cup Z(G)$ and let $g$ be a Belyi map for $S$ of degree $n$, written $g=[G_1:G_2]$ with coprime $G_1,G_2\in\Z[U,V]$ of degree $n$. Let $\rho$ be a nonzero integer such that $\gcd(G_1(u,v),G_2(u,v))$ divides $\rho$ for all coprime $(u,v)$ (for instance the resultant of $G_1$ and $G_2$), and let $E_G$ be the set of positive divisors of $27\rho^3$. Homogenizing the identity $4\varphi^3-\psi^2=27\omega$ gives
		\[
		4X^3-Y^2=27W,
		\]
		where
		\[
		\begin{aligned}
			X&=G_1^2-G_1G_2+G_2^2,\\
			Y&=(G_1+G_2)(G_1-2G_2)(2G_1-G_2),\\
			W&=G_1^2G_2^2(G_1-G_2)^2
		\end{aligned}
		\]
		are forms of degrees $2n$, $3n$ and $6n$. In the notation of Theorem~\ref{thm:wbelyiexists}, $J\circ g=[4X^3:27W]$ and $J\circ g-1=Y^2/(27W)$.
		
		Let $\x$ be coprime with $x_0x_1G(u,v)\neq0$, where $(u,v)=(x_0^{q_1},x_1^{q_0})$, and put $y=[u:v]$ and $H=H(y)=|\x|_{\w}^L$. The rational points $y$ with $g(y)\in\{0,1,\infty,-1,\tfrac12,2\}$ are finite in number, and each comes from at most four coprime pairs $\x$. At these $\rad(G(u,v))\ge1$, so they only affect the constant. For the remaining $\x$, the integers $\tilde x=X(u,v)$, $\tilde y=Y(u,v)$ and $\tilde w=W(u,v)$ are nonzero.
		
		\emph{Step 1: the common factor.} Write $G_1(u,v)=hs'$ and $G_2(u,v)=ht'$ with $h\mid\rho$ and $\gcd(s',t')=1$. Then $\tilde x=h^2X_0$ and $\tilde y=h^3Y_0$ with $X_0=s'^2-s't'+t'^2$ and $Y_0=(s'+t')(s'-2t')(2s'-t')$. The resultant of these two binary forms is $\pm27$, so $\gcd(X_0,Y_0)\mid27$. Hence $e:=\gcd(\tilde x,\tilde y)$ divides $27h^3$, and $e\in E_G$.
		
		\emph{Step 2: the triple.} Write $\tilde x=ex'$ and $\tilde y=ey'$, with $x',y'$ coprime and nonzero. Then
		\[
		4e\,x'^3-y'^2=27\tilde w/e^2=:c'\neq0,
		\]
		and $c'$ divides $27\tilde w$.
		
		\emph{Step 3: size.} The forms $X^3$ and $W$ of degree $6n$ have no common zero on $\P^1(\CC)$: a common zero $z$ would satisfy both $\varphi(g(z))=0$ and $g(z)\in\{0,1,\infty\}$. Hence $\max\{|\tilde x|^3,|\tilde w|\}\gg H^{6n}$. Since $27|\tilde w|\le5\max\{|\tilde x|^3,|\tilde y|^2\}$, we get $\max\{|x'|^3,|y'|^2\}\gg H^{6n}$. On the other hand $|x'|\le|\tilde x|\ll H^{2n}$ and $|y'|\le|\tilde y|\ll H^{3n}$.
		
		\emph{Step 4: the radical.} The zero set of $G_1G_2(G_1-G_2)$ is $g^{-1}(\{0,1,\infty\})$, which has $n+2$ points by Riemann--Hurwitz and contains $S$. Let $T$ be the product of the distinct primitive irreducible factors of $G_1G_2(G_1-G_2)$. Then $\deg T=n+2$ and $T=\pm UV\,G_0\,T'$, where $G_0$ is the primitive part of $G$ and $\deg T'=n-d$. Every prime dividing $\tilde w$ divides the content of $G_1G_2(G_1-G_2)$ or $T(u,v)$. Therefore
		\[
		\begin{aligned}
			\rad(c')
			&\le3\rad(\tilde w)
			\ll\rad(u)\rad(v)\rad(G(u,v))\,|T'(u,v)|\\
			&\ll|x_0x_1|\,\rad(G(u,v))\,H^{n-d}.
		\end{aligned}
		\]
		
		\emph{Step 5: conclusion.} Applying $\mathrm{wABC}(3,2;4e,-1)$ to $(x',y')$ and using Steps 3 and 4, we obtain
		\[
		H^{6n}\ll\bigl(H^{2n}H^{3n}\,|x_0x_1|\,\rad(G(u,v))\,H^{n-d}\bigr)^{1+\epsilon}.
		\]
		Using $|x_0x_1|\le|\x|_{\w}^{q_0+q_1}$ and $H=|\x|_{\w}^L$, this gives
		\[
		\rad(G(u,v))\gg H^{d-6n\epsilon}|\x|_{\w}^{-(q_0+q_1)}=|\x|_{\w}^{dL-q_0-q_1-6nL\epsilon}.
		\]
		Replacing $\epsilon$ by $\epsilon/(6nL)$ completes the proof. Note that $E_G$ depends only on $g$, and hence only on $G$, and not on $\w$.
	\end{proof}
	
	\begin{prop}\label{prop:equiv}
		The $abc$ conjecture holds if and only if\linebreak $\mathrm{wABC}(3,2;4e,-1)$ holds for each
		\[
		e\in\{1,3,9,27\}.
		\]
	\end{prop}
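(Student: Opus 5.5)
The forward implication is immediate from Proposition~\ref{prop:abcwabc}, which gives $\mathrm{wABC}(p,q;A,B)$ for all $p,q,A,B$, in particular for $(3,2;4e,-1)$. For the converse I would rerun the proof of Theorem~\ref{thm:wbelyi} in the simplest possible case. Take $\w=(1,1)$, drop the auxiliary form $G$, and let $g$ be the identity map, of degree $n=1$, with $G_1=U$, $G_2=V$. The identity is trivially a Belyi map for $S=\{0,1,\infty\}$. The point of this choice is that the common factor $h=\gcd(G_1(u,v),G_2(u,v))=\gcd(u,v)$ equals $1$. So the set of possible $e$ collapses from the divisors of $27\rho^3$ to the divisors of $27$, which is exactly $\{1,3,9,27\}$.

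Concretely, let $a+b=c$ with $a,b,c$ coprime, and set $(u,v)=(c,a)$, so that $u-v=b$ and $\gcd(u,v)=1$. Put $H=\max\{|u|,|v|\}$, which is comparable to $\max\{|a|,|b|,|c|\}$. The cases where $[u:v]$ lies in $\{0,1,\infty,-1,\tfrac12,2\}$ give only finitely many coprime triples, and I would absorb them into the constant. Otherwise put
\[
\tilde x=u^2-uv+v^2,\qquad \tilde y=(u+v)(u-2v)(2u-v),\qquad \tilde w=u^2v^2(u-v)^2 .
\]
These are nonzero and satisfy $4\tilde x^3-\tilde y^2=27\tilde w$.

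The steps then follow Steps 1--5 of the proof of Theorem~\ref{thm:wbelyi}.
\begin{enumerate}[(1)]
\item The resultant of $\varphi$ and $\psi$, homogenized, is $\pm27$, so $e=\gcd(\tilde x,\tilde y)$ divides $27$. Hence $e\in\{1,3,9,27\}$.
\item Write $\tilde x=ex'$ and $\tilde y=ey'$ with $x',y'$ coprime. This gives $4e\,x'^3-y'^2=c'$, where $c'=27\tilde w/e^2\neq0$.
\item The forms $X^3$ and $W$ have no common zero, so $\max\{|x'|^3,|y'|^2\}\gg H^6$. Trivially $|x'|\ll H^2$ and $|y'|\ll H^3$.
\item We have $\rad(c')\le3\rad(\tilde w)=3\rad(abc)$.
\end{enumerate}
Now apply $\mathrm{wABC}(3,2;4e,-1)$ to $(x',y')$. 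This gives
\[
H^6\ll\bigl(H^2H^3\rad(abc)\bigr)^{1+\epsilon},\qquad\text{hence}\qquad H\ll\rad(abc)^{1+7\epsilon}.
\]
After renaming $\epsilon$, this is the $abc$ conjecture.

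The only step needing care is (1): checking that the resultant of $X_0=s^2-st+t^2$ and $Y_0=(s+t)(s-2t)(2s-t)$ is exactly $\pm27$. This follows from $4X_0^3-Y_0^2=27s^2t^2(s-t)^2$ together with the coprimality of $s,t$: a prime $\ell\neq3$ dividing both $X_0$ and $Y_0$ would divide $st(s-t)$, and then also $X_0$, which forces $\ell\mid s,t$. The power of $3$ is controlled by a direct computation modulo $27$, or by citing the resultant computation already used in Step~1 of the proof of Theorem~\ref{thm:wbelyi}. Everything else is the argument of Theorem~\ref{thm:wbelyi} with $n=1$ and $d=0$.
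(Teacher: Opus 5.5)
Your proposal is correct and follows the paper's proof essentially step for step. The paper also specializes the construction of Theorem~\ref{thm:wbelyi} to $G_1=U$, $G_2=V$ at the coprime pair coming from the $abc$ triple, uses the resultant $\pm27$ to get $\gcd(\tilde x,\tilde y)\mid 27$, and applies $\mathrm{wABC}(3,2;4e,-1)$ to obtain $M^6\ll(M^5\rad(abc))^{1+\epsilon}$. The only differences are cosmetic: you evaluate at $(c,a)$ instead of $(a,c)$, and you get the lower bound $\max\{|x'|^3,|y'|^2\}\gg H^6$ from the no-common-zero argument rather than the paper's explicit inequality $2\tilde x\ge\max\{a^2,c^2\}$.
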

	
	\begin{proof}
		One direction is Proposition~\ref{prop:abcwabc}. Conversely, let $a+b=c$ with $a,b,c$ coprime and $abc\neq0$, and put $M=\max\{|a|,|b|,|c|\}$. With $X,Y,W$ as in the proof of Theorem~\ref{thm:wbelyi}, taking $G_1=U$ and $G_2=V$ and evaluating at $(a,c)$, we get
		\[
		\tilde x=a^2-ac+c^2,\qquad \tilde y=(a+c)(a-2c)(2a-c),\qquad 4\tilde x^3-\tilde y^2=27a^2b^2c^2.
		\]
		If $\tilde y=0$, then $a=-c$, $a=2c$ or $c=2a$, and coprimality forces $M\le2$. Otherwise, since $\gcd(a,c)=1$, the integer $e=\gcd(\tilde x,\tilde y)$ divides $27$. Write $\tilde x=ex'$ and $\tilde y=ey'$. Then $2\tilde x=a^2+c^2+(a-c)^2\ge\max\{a^2,c^2\}$, and $M\le2\max\{|a|,|c|\}$, so $\tilde x\ge M^2/8$. Also $|\tilde y|\le18M^3$ and $\rad(4ex'^3-y'^2)\le3\rad(abc)$. Now $\mathrm{wABC}(3,2;4e,-1)$ gives
		\[
		M^6\ll\bigl(M^5\rad(abc)\bigr)^{1+\epsilon},
		\]
		Thus
		\[
		\rad(abc)\gg M^{6/(1+\epsilon)-5}=M^{(1-5\epsilon)/(1+\epsilon)}.
		\]
		Given $\eta>0$, choose $\epsilon>0$ so small that $(1-5\epsilon)/(1+\epsilon)\ge1-\eta$. Then $M\ll_\eta\rad(abc)^{1/(1-\eta)}$, which is the usual $abc$ inequality after the standard reparametrization of the exponent.
	\end{proof}
	
	The integers $\tilde x,\tilde y$ are, up to the factors $16$ and $\pm32$, the invariants $c_4,c_6$ of the Frey curve $y^2=x(x-a)(x+b)$, whose discriminant is $16a^2b^2c^2$. The argument above is therefore a form of the familiar link between $abc$ and Szpiro-type inequalities for Frey curves; see \cite{Oesterle1988}.
	
	\begin{cor}\label{cor:universal}
		Let $\w=(2,3)$. If Conjecture~\ref{conj:radical} holds for $\w$ and each of the four linear forms $4eU-V$, $e\in\{1,3,9,27\}$, then the $abc$ conjecture holds. Consequently Conjectures~\ref{conj:integral} and~\ref{conj:radical} hold for all coprime weight vectors, all forms $G$ and all algebraic targets.
	\end{cor}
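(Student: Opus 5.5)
The plan is to show that Conjecture~\ref{conj:radical} for $\w=(2,3)$ and the linear form $G=4eU-V$ is precisely $\mathrm{wABC}(3,2;4e,-1)$, up to renaming $\epsilon$. Proposition~\ref{prop:equiv} then gives $abc$, and Theorem~\ref{thm:abcgeneral} gives the consequences.

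\emph{Step 1: matching the two statements.} For $\w=(q_0,q_1)=(2,3)$ we have $L=6$, $q_0+q_1=5$, and $(u,v)=(x_0^{q_1},x_1^{q_0})=(x_0^3,x_1^2)$. For the form $G=4eU-V$, which is squarefree of degree $d=1$ with $G(0,1)G(1,0)=-4e\neq0$, Conjecture~\ref{conj:radical} asserts
\[
\rad\bigl(4e\,x_0^3-x_1^2\bigr)\ge C\,|\x|_{\w}^{1-\epsilon}
\]
for coprime $\x$ with $x_0x_1(4ex_0^3-x_1^2)\neq0$. Here $|\x|_{\w}^6=\max\{|x_0|^3,|x_1|^2\}$. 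We need $\max\{|x|^3,|y|^2\}\le K(|xy|\rad(4ex^3-y^2))^{1+\epsilon}$ for coprime $x,y$, so we set $x=x_0$ and $y=x_1$. Then $|xy|\le|\x|_{\w}^{2+3}=|\x|_{\w}^5$, and the conjectured bound gives
\[
|xy|\,\rad(4ex^3-y^2)\ge C\,|xy|\,|\x|_{\w}^{1-\epsilon}.
\]
This is not yet enough, because $|xy|$ may be much smaller than $|\x|_{\w}^5$.

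\emph{Step 2: handling small $|xy|$.} This is the main obstacle, since the conjecture controls only the radical, while wABC pays $|xy|$, which can be small. The remedy is to note that $4ex^3-y^2$ is large whenever $|x|^3$ and $|y|^2$ are unbalanced. Put $M=|\x|_{\w}^6$. If $|4ex^3-y^2|\ge M/2$, then trivially
\[
\rad(4ex^3-y^2)\,|xy|\ge\rad(4ex^3-y^2)\ge C|\x|_{\w}^{1-\epsilon},
\]
which is too weak, so a different bound is needed here. The cleaner route is to check what the proof of Proposition~\ref{prop:equiv} actually uses. It applies wABC only to $(x',y')=(\tilde x/e,\tilde y/e)$ with $\tilde x\gg M_{abc}^2$, $|\tilde y|\ll M_{abc}^3$, and $\tilde x^3$ of exact size comparable to the largest term. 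On that balanced range $|x'|^3\asymp|y'|^2\asymp|\x|_{\w}^6$, so $|x'y'|\asymp|\x|_{\w}^5$, and the conjectured radical bound gives
\[
\max\{|x'|^3,|y'|^2\}\ll\bigl(|x'y'|\rad(c')\bigr)^{1+\epsilon}.
\]
I would therefore redo the converse half of Proposition~\ref{prop:equiv} directly with this weaker, balanced input. Given coprime $a+b=c$, form $\tilde x,\tilde y$, choose $e\mid27$, and note $|x'|\gg M^2$ and $|y'|\ll M^3$. Conjecture~\ref{conj:radical} then yields
\[
\rad(27a^2b^2c^2)\gg\rad(4ex'^3-y'^2)\gg|\x|_{\w}^{1-\epsilon}\gg M^{1/3\cdot\ldots}.
\]
More precisely, $|\x|_{\w}\ge|x'|^{1/2}\gg M$, so $\rad(abc)\gg M^{1-\epsilon}$, which is $abc$. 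The cases $\tilde y=0$ (forcing $M\le2$) and $x'=0$ are handled as in that proof.

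\emph{Step 3: consequences.} Once $abc$ holds, Theorem~\ref{thm:abcgeneral}(a) gives Conjecture~\ref{conj:radical} for every coprime $\w$ and every admissible $G$. Theorem~\ref{thm:abcgeneral}(b) gives Conjecture~\ref{conj:integral} for every algebraic target $\alpha$ with $\Phi(\alpha)\notin\{[0:1],[1:0]\}$.
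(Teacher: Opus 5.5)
Your argument is correct and is essentially the paper's proof: apply Conjecture~\ref{conj:radical} for $4eU-V$ directly to the pair $(x',y')$ from the proof of Proposition~\ref{prop:equiv}, use $|\x|_{\w}\ge|x'|^{1/2}\gg M$ and $\rad(4ex'^3-y'^2)\le3\rad(abc)$ to get $\rad(abc)\gg M^{1-\epsilon}$, then invoke Theorem~\ref{thm:abcgeneral} for the consequences. One caveat: your intermediate claim that $|y'|^2\asymp|\x|_{\w}^6$ (hence $|x'y'|\asymp|\x|_{\w}^5$) on that range is false in general (for $a+c=1$ one has $|\tilde y|\asymp M^2$), so the detour through $\mathrm{wABC}$ would fail there, but your final derivation uses only $|\x|_{\w}\ge|x'|^{1/2}$ and does not depend on it.
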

	
	\begin{proof}
		For $\w=(2,3)$ we have $L=6$ and $q_0+q_1=5$. For $G=4eU-V$, Conjecture~\ref{conj:radical} states that $\rad(4e\,x_0^3-x_1^2)\gg|\x|_{\w}^{1-\epsilon}$ for coprime $\x$ with $x_0x_1(4ex_0^3-x_1^2)\neq0$. Apply this to $(x_0,x_1)=(x',y')$ from the proof of Proposition~\ref{prop:equiv}. There $|\x|_{\w}\ge|x'|^{1/2}\ge(\tilde x/27)^{1/2}\gg M$ and $\rad(4ex'^3-y'^2)\le3\rad(abc)$, so $\rad(abc)\gg M^{1-\epsilon}$. The last assertion follows from Theorem~\ref{thm:abcgeneral} and Lemma~\ref{lem:deduction}.
	\end{proof}
	
	\begin{rem}\label{rem:wbelyi}
		\begin{enumerate}[(a),leftmargin=*]
			\item Theorem~\ref{thm:wbelyi} and Proposition~\ref{prop:equiv} show that weighted Belyi maps allow Theorem~\ref{thm:abcgeneral}(a) to be derived from the weighted $abc$ conjecture of signature $(3,2)$ alone, but this does not weaken the hypothesis, since that conjecture is equivalent to $abc$. Corollary~\ref{cor:universal} shows that the radical form of the conjecture for $\w=(2,3)$ and four rational targets already implies all other cases.
			\item The radical is essential in this argument. For the pairs $(x',y')$ used above, $|4ex'^3-y'^2|=27a^2b^2c^2/e^2\gg M^4$ while $|\x|_{\w}\ll M$. So the form of Conjecture~\ref{conj:integral} with $|F_\alpha(\x)|$ in place of the radical is automatically satisfied there and gives no information.
			\item Suppose that for some signature $(m_0,m_1)$ there is a weighted Belyi map $f$ of signature $(m_0,m_1)$ for $\{0,1,\infty\}$. The argument of Proposition~\ref{prop:equiv} then shows that $\mathrm{wABC}(m_0,m_1;\cdot,\cdot)$, for finitely many coefficient pairs, implies $abc$. The exponent count uses Riemann--Hurwitz in the form $N/m_0+N/m_1+\#f^{-1}(\infty)=N+2$, where $N=\deg f$. We have not investigated which signatures admit such maps.
			\item In Proposition~\ref{prop:kummer} the power map $t\mapsto t^n$ plays the role of the Belyi map. It respects the stacky structure, which is why a single weighted $abc$ statement suffices there.
		\end{enumerate}
	\end{rem}
	
	\subsection{Sharpness}
	
	\begin{prop}\label{prop:danilov}
		Let $\w=(2,3)$, so $L=6$ and $q_0+q_1=5$. There are infinitely many coprime $\x\in\Z^2$ with $0<|x_0^3-x_1^2|<|\x|_{\w}$. Hence for $\Phi(\alpha)=[1:1]$ the constant $q_0+q_1$ in Conjecture~\ref{conj:integral} and Theorem~\ref{thm:abcgeneral} cannot be lowered.
	\end{prop}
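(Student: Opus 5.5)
The plan is to use the Danilov--Elkies polynomial identity, which produces infinitely many coprime integer pairs with $0<|x^3-y^2|\ll |x|^{1/2}$. Danilov's family \cite{Danilov1982} gives infinitely many $(x,y)$ with $0<|x^3-y^2|<0.97\,|x|^{1/2}$. It comes from the identity
\[
(t^2+10t+5)^3-(t^3+15t^2+75t+125)\,\cdots
\]
which we will not reproduce. In the form we cite, evaluated along solutions of a Pell equation, it yields $|x^3-y^2|\le c\,|x|^{1/2}$ with an absolute constant $c$; Elkies' list \cite{Elkies2000} of good Hall examples also gives the constant below $1$ for infinitely many pairs. For our purposes it is cleaner to cite the result directly. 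There are infinitely many pairs of positive integers $(x,y)$ with $x^3\neq y^2$ and $|x^3-y^2|<c_0\,x^{1/2}$ for some absolute $c_0$. Moreover these can be taken coprime, since in Danilov's family $\gcd(x,y)$ divides a fixed small integer and one checks coprimality along the Pell recursion.

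Next we convert this to the stated inequality. Put $\x=(x_0,x_1)=(x,y)$ with $\w=(2,3)$. Since $x^3\approx y^2$, we have $|\x|_{\w}=\max\{x^{1/2},y^{1/3}\}=x^{1/2}(1+o(1))$. The bound $|x^3-y^2|<c_0x^{1/2}$ therefore becomes $|x_0^3-x_1^2|<c_0'|\x|_{\w}$. To get the constant $1$ exactly as stated, we need $c_0<1$. Danilov's family has $c_0=0.97\ldots\cdot$ (the constant $\tfrac{1}{\sqrt{2}}\cdot$something), and any constant below $1$ suffices asymptotically, because $y^{1/3}/x^{1/2}\to1$. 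This step is the main obstacle: it requires pinning down a family whose constant is genuinely less than $1$, and checking coprimality. If that fails, the weaker statement $\ll|\x|_{\w}$ still proves the sharpness claim, which is what matters.

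Finally, for sharpness, take $\alpha$ with $\Phi(\alpha)=[1:1]$, so that $F_\alpha(\x)=x_0^3-x_1^2$ up to sign and $L=6$. For the points above, $P=[\x]_{\w}$ is integral, so $\lwh(P)=\log|\x|_{\w}$. Using the relation $-\log\delta_\infty(P,\alpha)=-\log|F_\alpha(\x)|+L\log|\x|_{\w}+O(1)$ after Conjecture~\ref{conj:integral}, we get
\[
-\log\delta_\infty(P,\alpha)\ge(6-1)\log|\x|_{\w}-O(1)=5\,\lwh(P)-O(1).
\]
This holds for infinitely many $P$ with $\lwh(P)\to\infty$ and $\Phi(P)\ne\Phi(\alpha)$. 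Hence no constant $5-\eta$ with $\eta>0$ can replace $q_0+q_1$ in Conjecture~\ref{conj:integral}. The same applies to Theorem~\ref{thm:abcgeneral}(b) with $S=\{\infty\}$, since it coincides with Conjecture~\ref{conj:integral} on integral points.
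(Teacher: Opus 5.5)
Your route is the paper's: a Danilov--Elkies identity evaluated along a Pell sequence, followed by $-\log\delta_\infty(P,\alpha)=-\log|x_0^3-x_1^2|+6\log|\x|_{\w}$. Your final paragraph deducing sharpness is correct; for $\alpha=[1:1]_{\w}$ one has $\nm{\aaa}_{\infty,\w}=1$, so there is not even an $O(1)$. The gap is that the first assertion of the proposition is never established. You write down no family, so neither property the statement demands is checked, and these checks are the whole content of the proof.

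Citing Danilov's theorem gives $0<|x^3-y^2|<0.97\sqrt{x}$ infinitely often, but it does not give coprimality. Coprimality is not cosmetic: Conjecture~\ref{conj:integral} only concerns integral points, and a $\w$-reduced pair with a common prime factor does not represent one (Lemmas~\ref{lem:integral} and~\ref{lem:integralpoints}(a)). Your justification, that ``$\gcd(x,y)$ divides a fixed small integer and one checks coprimality along the Pell recursion'', is not an argument. A priori a common prime of $x_0,x_1$ only divides $x_0^3-x_1^2$, which in the family grows with the parameter. On the constant you are inconsistent: you first quote $0.97$ and then call a constant below $1$ the main obstacle. The fallback $|x_0^3-x_1^2|\ll|\x|_{\w}$ yields the sharpness consequence but not the stated inequality. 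A finite list of examples such as Elkies' cannot supply infinitely many pairs either.

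What is missing is an explicit identity and three short verifications. The paper takes $x_0=5^5t^2-3000t+719$ and $w(t)=5^3t^2-114t+26$, with the identity $x_0^3-w(t)\bigl(5^6t^2-5^3\cdot123\,t+3781\bigr)^2=27(2t-1)$.

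\emph{Infinitude.} Since $125\,w(t)=(125t-57)^2+1$, the condition $w(t)=s^2$ becomes $X^2-125s^2=-1$ with $X\equiv-57\pmod{125}$. The solution $(X,s)=(-682,61)$ at $t=-5$ propagates under norm-one units $a+b\sqrt{125}$ with $a\equiv1$, $b\equiv0\pmod{125}$, and one sets $x_1=s\bigl(5^6t^2-5^3\cdot123\,t+3781\bigr)$.

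\emph{Coprimality.} A common prime of $x_0,x_1$ would divide $27(2t-1)$. But $4x_0\equiv1\pmod{2t-1}$ and $x_0\equiv-(t^2+1)\not\equiv0\pmod3$.

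\emph{The constant.} Since $x_0>0$ for every integer $t$, you get $|\x|_{\w}\ge x_0^{1/2}$ directly; your asymptotic $x_1^{1/3}/x_0^{1/2}\to1$ is not needed. Moreover $27|2t-1|/x_0^{1/2}\to54/5^{5/2}\approx0.966<1$ as $|t|\to\infty$, which is Danilov's $0.97$.

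With these in place, your last paragraph finishes the proof.
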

	
	\begin{proof}
		For $t\in\Z$ put $x_0=5^5t^2-3000t+719$ and $w(t)=5^3t^2-114t+26$. One checks the identity
		\[
		x_0^3-w(t)\bigl(5^6t^2-5^3\cdot123\,t+3781\bigr)^2=27(2t-1)
		\]
		due to Danilov and Elkies \cite{Danilov1982,Elkies2000}. Since $125\,w(t)=(125t-57)^2+1$, the values of $t$ with $w(t)=s^2$ a square correspond to solutions of $X^2-125s^2=-1$ with $X\equiv-57\pmod{125}$. One solution is $(X,s)=(-682,61)$, coming from $t=-5$. Multiplying $-682+61\sqrt{125}$ by powers of a unit $a+b\sqrt{125}$ of norm $1$ with $a\equiv1$ and $b\equiv0\pmod{125}$ preserves the congruence, so there are infinitely many such $t$. For these put $x_1=s(5^6t^2-5^3\cdot123\,t+3781)$. Then $x_0^3-x_1^2=27(2t-1)\neq0$. Since $4x_0\equiv1\pmod{2t-1}$ and $x_0\equiv-(t^2+1)\not\equiv0\pmod3$, we have $\gcd(x_0,27(2t-1))=1$, and hence $\gcd(x_0,x_1)=1$. The quadratic $5^5t^2-3000t+719$ is negative only for $t$ in an interval inside $(0.46,0.5)$, which contains no integer; hence $x_0>0$ and $|\x|_{\w}\ge x_0^{1/2}$. Along the Pell sequence one has $|t|\to\infty$, hence $x_0\to\infty$ and $\lwh([x_0:x_1]_{\w})\to\infty$. Finally $27|2t-1|/x_0^{1/2}\to54/5^{5/2}<1$ as $|t|\to\infty$. For $\alpha=[1:1]_{\w}$ this gives $-\log\delta_\infty(P,\alpha)=-\log|x_0^3-x_1^2|+6\log|\x|_{\w}\ge5\,\lwh(P)$.
	\end{proof}
	
	For $\w=(2,3)$ and $\Phi(\alpha)=[1:1]$, Conjecture~\ref{conj:integral} is the coprime case of Hall's conjecture $|x^3-y^2|\gg_\epsilon|x|^{1/2-\epsilon}$ \cite{Hall1971}, and Proposition~\ref{prop:danilov} is Danilov's theorem. More generally, Theorem~\ref{thm:abcgeneral} bounds the coprime solutions of $\beta_1x^{q_1}-\beta_0y^{q_0}=m$ by $|(x,y)|_{\w}\ll_\epsilon|m|^{1/(L-q_0-q_1)+\epsilon}$ whenever $L>q_0+q_1$.
	
	\subsection{Unconditional results}
	
	Over one-variable function fields, the Mason--Stothers theorem \cite{Mason1984,Stothers1981} gives an unconditional analogue of the expected exponent for constant targets and polynomial approximants. The following proposition is the precise statement used here. Let $\mathbf k$ be an algebraically closed field of characteristic $0$. For $x_0,x_1\in\mathbf k[t]$ put $d_{\w}(\x)=\max\{\deg x_0/q_0,\deg x_1/q_1\}$, the analogue of $\log|\x|_{\w}$ for the place at infinity.
	
	\begin{prop}\label{prop:functionfield}
		Let $x_0,x_1\in\mathbf k[t]$ be coprime and not both constant, and let $b\in\mathbf k^*$. If $F=x_0^{q_1}-b\,x_1^{q_0}\neq0$, then
		\[
		\deg F\ge(L-q_0-q_1)\,d_{\w}(\x)+1.
		\]
		For $\w=(2,3)$ and $b=1$ the bound is attained, for example by $(x_0,x_1)=(t^2+2,\,t^3+3t)$, and more generally by the pairs of Stothers and Zannier \cite{Stothers1981,Zannier1995}.
	\end{prop}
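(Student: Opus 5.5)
We apply the Mason--Stothers theorem to the polynomial identity $a+c=F$ with $a=x_0^{q_1}$ and $c=-b\,x_1^{q_0}$. The theorem says that if $a,c,a+c\in\mathbf k[t]$ are coprime and not all constant, then
\[
\max\{\deg a,\deg c,\deg(a+c)\}\le n_0(a\,c\,(a+c))-1,
\]
where $n_0$ counts distinct roots.

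First we check the hypotheses. Coprimality of $x_0,x_1$ gives $\gcd(a,c)=1$. Any common root of $F$ with $a$ or $c$ would then be a common root of $a$ and $c$, so the three terms are pairwise coprime. They are not all constant, since $x_0,x_1$ are not both constant and $b\neq0$.

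Next we bound the left side from below. Put $D=d_{\w}(\x)$. Then $\deg a=q_1\deg x_0$ and $\deg c=q_0\deg x_1$, so
\[
\max\{\deg a,\deg c\}=L\max\{\deg x_0/q_0,\deg x_1/q_1\}=L\,D.
\]

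Then we bound the right side from above. We have $n_0(a)=n_0(x_0)\le\deg x_0\le q_0D$ and $n_0(c)=n_0(x_1)\le\deg x_1\le q_1D$. Also $n_0(F)\le\deg F$. By pairwise coprimality the root counts add, so
\[
LD\le q_0D+q_1D+\deg F-1.
\]
Rearranging gives $\deg F\ge(L-q_0-q_1)D+1$. This is the same radical mechanism as in Remark~\ref{rem:abcgeneral}(a): $\rad(u)\rad(v)\le|x_0x_1|$, with Mason--Stothers in place of $abc$ and no $\epsilon$ loss.

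For the example we compute directly:
\[
(t^2+2)^3-(t^3+3t)^2=(t^6+6t^4+12t^2+8)-(t^6+6t^4+9t^2)=3t^2+8.
\]
This has degree $2$. The two polynomials are coprime, since at a common root $t^2=-2$ and $t(t^2+3)=t=0$, which is impossible. Here $D=\max\{2/2,3/3\}=1$, and the bound gives $(6-5)\cdot1+1=2$, so equality holds. For the families of Stothers and Zannier we cite \cite{Stothers1981,Zannier1995}. Those pairs are exactly the cases of equality in Mason--Stothers with $x_0,x_1$ squarefree.

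The main point needing care is the coprimality bookkeeping, which justifies adding root counts, together with the observation that constant $x_0$ or $x_1$ is harmless because the bounds $n_0\le\deg$ still hold. No serious obstacle is expected.
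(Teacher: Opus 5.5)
Your proof of the inequality is the paper's own argument. You apply Mason--Stothers to $x_0^{q_1}$, $-b\,x_1^{q_0}$ and $F$, bound the maximal degree below by $L\,d_{\w}(\x)$, and bound $n_0$ above by $\deg x_0+\deg x_1+\deg F$. Your verification of the example $(t^2+2,\,t^3+3t)$, including its coprimality, is also correct.

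The gap is in the last clause of the statement, that the pairs of Stothers and Zannier attain the bound. The inequality concerns only coprime pairs. The existence result you cite (for every $\delta\ge1$ there are $f,g$ with $\deg f=2\delta$, $\deg g=3\delta$, $\deg(f^3-g^2)=\delta+1$) does not by itself say that $f,g$ are coprime, so this must be checked.

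Your sentence that these pairs are ``exactly the cases of equality in Mason--Stothers with $x_0,x_1$ squarefree'' does not supply that check, because it presupposes coprimality. It is also false. Take $f=t^2-1$ and $g=t^3+3t$, which are coprime and squarefree. Then $f^3-g^2=-(3t^2+1)^2$, so $n_0=2+3+2=7$ and Mason--Stothers holds with equality. Yet $\deg F=4$, while the bound is $2$. Equality in the proposition requires in addition that $F$ be squarefree and that $\deg x_0=q_0\,d_{\w}(\x)$ and $\deg x_1=q_1\,d_{\w}(\x)$.

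The missing step is short, and it is what the paper supplies. Suppose $\gcd(f,g)\neq1$, put $e=\gcd(f^3,g^2)$, and apply Mason--Stothers to the pairwise coprime polynomials $f^3/e$, $-g^2/e$ and $(f^3-g^2)/e$. Since $f$ and $g$ share a root, $n_0(fg)\le5\delta-1$. This gives $6\delta-\deg e\le(5\delta-1)+(\deg(f^3-g^2)-\deg e)-1$, that is, $\deg(f^3-g^2)\ge\delta+2$, a contradiction. So these pairs are coprime, and they attain $(6-5)\delta+1=\delta+1$.
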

	
	\begin{proof}
		The polynomials $A=x_0^{q_1}$, $B=-b\,x_1^{q_0}$ and $F=A+B$ are pairwise coprime and not all constant. By Mason--Stothers, $\max\{\deg A,\deg B,\deg F\}\le n_0(ABF)-1$, where $n_0$ counts distinct roots. The left side is at least $\max\{q_1\deg x_0,q_0\deg x_1\}=L\,d_{\w}(\x)$. The right side is at most $\deg x_0+\deg x_1+\deg F-1\le(q_0+q_1)d_{\w}(\x)+\deg F-1$. For the example, $(t^2+2)^3-(t^3+3t)^2=3t^2+8$ has degree $2=(6-5)\cdot1+1$. Zannier shows that for every $\delta\ge1$ there are $f,g$ with $\deg f=2\delta$, $\deg g=3\delta$ and $\deg(f^3-g^2)=\delta+1$. Such pairs are coprime: if $h=\gcd(f,g)\neq1$, apply Mason--Stothers to $f^3/e$, $-g^2/e$ and $(f^3-g^2)/e$ with $e=\gcd(f^3,g^2)$. Using $n_0(fg)\le\deg f+\deg g-1$, this gives $\deg(f^3-g^2)\ge\delta+2$.
	\end{proof}
	
	Over $\Q$, the conjecture holds for almost all targets.
	
	\begin{prop}\label{prop:metric}
		For Lebesgue-almost every $b\in\R$ and every $\epsilon>0$ there is $C=C(b,\epsilon)>0$ such that
		\[
		|x_0^{q_1}-b\,x_1^{q_0}|\ge C\,|\x|_{\w}^{L-q_0-q_1-\epsilon}
		\]
		for all coprime $\x\in\Z^2$.
	\end{prop}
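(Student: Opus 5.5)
This is a Borel--Cantelli argument, and we work on a bounded interval $b\in[-B,B]$ with $b\neq0$; a countable union over $B$ then gives the full statement. For a coprime pair $\x$ with $x_1\neq0$, set $X=|\x|_{\w}$ and let
\[
E_{\x}=\Bigl\{b\in[-B,B]: |x_0^{q_1}-b\,x_1^{q_0}|< X^{L-q_0-q_1-\epsilon}\Bigr\}.
\]
This is an interval of length at most $2X^{L-q_0-q_1-\epsilon}/|x_1|^{q_0}$, centred at $x_0^{q_1}/x_1^{q_0}$.

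We only need pairs where $E_{\x}$ is nonempty. For these, $|x_0|^{q_1}\le B|x_1|^{q_0}+X^{L-\epsilon'}$. Together with $|x_0|^{q_1}\le X^L$, this forces $|x_1|^{q_0}\gg X^{L}$ for all but finitely many dyadic ranges. The reason is that the maximum defining $X$ is attained either by $|x_1|^{1/q_1}$ directly, or by $|x_0|^{1/q_0}$, and in the latter case the inequality gives $|x_1|^{q_0}\gg|x_0|^{q_1}=X^L$ once $X^{L-\epsilon}$ is negligible. Hence $|E_{\x}|\ll X^{-q_0-q_1-\epsilon}$.

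Next we sum over dyadic ranges $X\in[2^k,2^{k+1})$. The number of pairs with $|x_0|\le X^{q_0}$ and $|x_1|\le X^{q_1}$ is $\ll X^{q_0+q_1}$. So the total measure contributed by the range $k$ is $\ll 2^{-k\epsilon}$, and this is summable in $k$. By Borel--Cantelli, almost every $b$ lies in only finitely many $E_{\x}$.

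For such $b$, let $\x_1,\dots,\x_m$ be the exceptional pairs. Take $b$ also outside the countable set $\{x_0^{q_1}/x_1^{q_0}\}$, which has measure zero. Then each $|x_0^{q_1}-bx_1^{q_0}|$ for an exceptional pair is a fixed positive number, and we can lower $C$ to absorb all of them. This handles the finitely many exceptions.

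Finally, the pairs with $x_1=0$ are $(\pm1,0)$, where the left side equals $1$ and $X=1$; these also only affect $C$. We take $\epsilon$ small, which is harmless since the inequality only gets weaker as $\epsilon$ grows. The case $L-q_0-q_1-\epsilon<0$ needs no separate treatment: the argument above does not depend on the sign of the exponent.

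The main obstacle is the step bounding $|x_1|^{q_0}$ from below by $X^L$. Without it the interval lengths are not uniformly small and the count fails. The cleanest route is to restrict first to $b$ in a compact subset of $\R^*$ and to note that $E_{\x}\neq\emptyset$ forces $x_0^{q_1}/x_1^{q_0}$ to be bounded up to a small error. This gives $|x_0|^{q_1}\ll|x_1|^{q_0}$, and hence $X^L\ll|x_1|^{q_0}$.
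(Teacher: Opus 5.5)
Your proposal is correct and is essentially the paper's argument: Borel--Cantelli over dyadic ranges of $|\x|_{\w}$, using $|x_1|^{q_0}\gg|\x|_{\w}^{L}$ to get intervals of length $\ll|\x|_{\w}^{-q_0-q_1-\epsilon}$ against $\ll T^{q_0+q_1}$ pairs, and then absorbing the finitely many exceptions for $b$ outside the countable set of values $x_0^{q_1}/x_1^{q_0}$. Your $[-B,B]$ computation also shows that this lower bound needs only an upper bound on $|b|$. The one step you should state explicitly is that the full-measure set from Borel--Cantelli depends on $\epsilon$: intersect the sets obtained for $\epsilon=1/n$, $n\ge1$, and use the monotonicity in $\epsilon$ you already noted, as the paper does.
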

	
	\begin{proof}
		It suffices to treat $b$ in a compact interval $I\subset\R\setminus\{0\}$ and a fixed $\epsilon$. For coprime $\x$ let $E_{\x}=\{b\in I:|x_0^{q_1}-b\,x_1^{q_0}|<|\x|_{\w}^{L-q_0-q_1-\epsilon}\}$, and let $T\le|\x|_{\w}<2T$ with $T$ large. If $E_{\x}\neq\emptyset$, then $|x_1|^{q_0}\gg T^L$. Indeed $\max\{|x_0|^{q_1},|x_1|^{q_0}\}=|\x|_{\w}^L$, and $|x_0^{q_1}-bx_1^{q_0}|$ is $o(T^L)$ while $b$ is bounded away from $0$ and $\infty$. Hence $E_{\x}$ is an interval of length $\ll T^{-q_0-q_1-\epsilon}$. There are $\ll T^{q_0+q_1}$ vectors with $|\x|_{\w}<2T$, so $\sum_{T\le|\x|_{\w}<2T}|E_{\x}|\ll T^{-\epsilon}$, which is summable over $T=2^k$. By the Borel--Cantelli lemma, for this fixed $\epsilon$ almost every $b\in I$ lies in only finitely many $E_{\x}$. Intersecting the resulting full-measure sets for $\epsilon=1/n$, $n\ge1$, gives one full-measure set on which the asserted conclusion holds for every $\epsilon>0$. The vectors with $x_1=0$ form a finite coprime exceptional set and may be absorbed into the constant. For irrational $b$ the finitely many remaining exceptional $\x$ have $x_0^{q_1}-bx_1^{q_0}\neq0$, so they can be absorbed into $C$.
	\end{proof}
	
	\begin{rem}\label{rem:barrier}
		An unconditional proof of Conjecture~\ref{conj:integral} over $\Q$ would in particular establish the coprime case of Hall's conjecture. The best unconditional results in that direction come from Baker's method and are of logarithmic strength \cite{Baker1968}. There is also a structural reason why the methods behind Theorem~\ref{thm:wroth} cannot reach the conjecture. The map $\Phi$ identifies $\P^1_{\w}$ with $\P^1$ and maps $\P^1_{\w}(\Q)$ onto $\P^1(\Q)$, and on rational points the exponent $2L$ is sharp (Proposition~\ref{prop:optimal}). Any proof of Conjecture~\ref{conj:integral} must therefore use integrality, and in the proof of Theorem~\ref{thm:abcgeneral} integrality enters only through radicals. We do not know how to use information of this kind in the auxiliary polynomial method or in the subspace theorem.
	\end{rem}
	
	\section{A subspace theorem for diagonal hypersurfaces}\label{sec:subspace}
	
	We return to arbitrary weights $\w=(q_0,\dots,q_n)$. Vectors $\cc_1,\dots,\cc_t\in\bQ^{n+1}\setminus\{0\}$ are in \emph{general position} if any $\min\{t,n+1\}$ of them are linearly independent; equivalently, the hyperplanes $\Lambda_{\cc_j}$ are in general position \cite[Def.~7.1]{Vojta2011}.
	
	\begin{thm}[Schmidt]\label{thm:schmidt}
		Let $S\subset M_k$ be finite and contain the archimedean places, and let $\cc_1,\dots,\cc_t\in\bQ^{n+1}$ be in general position. For every $\epsilon>0$ and $c\in\R$,
		\[
		\sum_{j=1}^t\sum_{v\in S}\lambda_{\cc_j,v}(y)\le(n+1+\epsilon)\,h_k(y)+c
		\]
		for all $y\in\P^n(k)$ outside a finite union of proper linear subspaces of $\P^n_k$.
	\end{thm}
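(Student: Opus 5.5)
Theorem~\ref{thm:schmidt} is Schmidt's subspace theorem in its standard $S$-adic, several-hyperplane form. I would not try to reprove it. The plan is to reduce it to the formulation in the literature, as was done for Theorem~\ref{thm:roth}, and to check only that the normalizations match.

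The reference form is \cite[Thm.~7.2]{Vojta2011} (equivalently \cite[Thm.~7.2.2]{BG2006}, or \cite[Cor.~D.3.3]{HS2000}). It concerns linear forms $\ell_{\cc_j}$ with algebraic coefficients in general position and Weil functions $\lambda_{\cc_j,v}$ normalized as in Section~\ref{sec:weil}. Its conclusion is that $\sum_j\sum_{v\in S}\lambda_{\cc_j,v}(y)\le(n+1+\epsilon)h_k(y)+c$ off a finite union of proper linear subspaces.

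First I would match the Weil functions. Our $\lambda_{\cc,v}$ uses $\nm{\cdot}_v=|\cdot|_v^{n_v}$. Changing to any other choice of norm on the coefficient vector, or to any other extension of $|\cdot|_v$ to $k(\cc)$ fixed once, changes $\lambda_{\cc,v}$ only by a bounded function. Such changes are absorbed in $c$. The height $h_k$ is the unnormalized height, consistent with the factors $n_v$. If the reference uses absolute heights $h=h_k/[k:\Q]$, I would multiply through by $[k:\Q]$, which rescales only $\epsilon$ and $c$.

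Next I would match the general position hypothesis. The reference requires $t\ge n+1$ forms with any $n+1$ linearly independent. When $t\le n$, I would add further forms from $k^{n+1}$ so that all $n+1$ forms are in general position; this is possible since generic choices avoid finitely many proper subvarieties. This enlarges the left-hand side only by terms bounded below by $-N_v\log(n+1)$ (Lemma~\ref{lem:weilcompare}), so the inequality for the smaller family follows after adjusting $c$. The hypothesis that $S$ contains the archimedean places is the one used in the reference, and enlarging $S$ is harmless for the same reason.

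The only delicate point is coefficients outside $k$. The reference requires the hyperplanes to be defined over $k$, or else it uses chosen extensions of the places to $\bQ$. I would follow the convention fixed in the paper's Conventions: extend each $|\cdot|_v$ to $\bQ$ once. Then the statement with $\cc_j\in\bQ^{n+1}$ is exactly the formulation in \cite{Vojta2011}. There the version with algebraic coefficients is derived by passing to a finite extension $k'\supset k(\cc_1,\dots,\cc_t)$, applying the $k'$-version with the places above $S$, and comparing heights via $h_{k'}=[k':k]h_k$. So after these bookkeeping reductions, the theorem is a direct citation.
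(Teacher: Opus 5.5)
Your overall route matches the paper's. The paper gives no argument: it cites \cite[Thm.~7.11]{Vojta2011} for hyperplanes defined over $k$, and \cite{Schmidt1980,Schmidt1991}, the remarks after Vojta's theorem, and \cite[Ch.~7]{BG2006} for algebraic coefficients. Your checks on norms, on $h_k$ versus $h$, on padding to $n+1$ forms and on enlarging $S$ are fine. The one argument you actually supply, however, is the reduction of algebraic coefficients to a finite extension, and it does not work as written. It also rests on a false claim.

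The false claim is that changing the extension of $|\cdot|_v$ to $k(\cc)$ changes $\lambda_{\cc,v}$ only by a bounded function. In fact the change replaces $\Lambda_{\cc}$ at $v$ by a Galois conjugate. Take $k=\Q$, $S=\{\infty\}$, $n=t=1$, $\cc=(1,-\sqrt2)$, and $k'=\Q(\sqrt2)$ with its real places $w_\pm$ given by $\sqrt2\mapsto\pm\sqrt2$. For $y\in\P^1(\Q)$:
\begin{itemize}
\item $\lambda_{\cc,w_+}(y)$ measures the proximity of $y$ to $\sqrt2$;
\item $\lambda_{\cc,w_-}(y)$ measures the proximity of $y$ to $-\sqrt2$, and is bounded near $\sqrt2$.
\end{itemize}
Now apply the $k'$-version with the same hyperplane at both places above $\infty$ and use $h_{k'}=2h$. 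You only get $\lambda_{\cc,\infty}(y)\le(4+2\epsilon)h(y)+O(1)$, which is weaker even than Liouville's bound. In general, with $w_v$ the chosen place above $v$, you obtain
\[
\sum_{v\in S}[k'_{w_v}:k_v]\sum_j\lambda_{\cc_j,v}(y)\le(n+1+\epsilon)[k':k]h_k(y)+O(1).
\]
This loses a factor as soon as some $v\in S$ has more than one extension to $k'$.

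The missing idea is to put \emph{conjugate} hyperplanes at the other places. This needs the version with hyperplanes depending on the place. Let $k''$ be the Galois closure of $k(\cc_1,\dots,\cc_t)$ over $k$, and let $w_v$ be the place defined by the fixed extension of $|\cdot|_v$.
\begin{itemize}
\item Every place $w\mid v$ of $k''$ is given by $|x|_w=|\sigma_w^{-1}x|_{w_v}$ for some $\sigma_w\in\operatorname{Gal}(k''/k)$.
\item For $y\in\P^n(k)$ one has $\lambda_{\sigma_w\cc_j,w}(y)=[k''_w:k_v]\,\lambda_{\cc_j,v}(y)$, and $\sum_{w\mid v}[k''_w:k_v]=[k'':k]$.
\item For each $w$, the forms $\sigma_w\cc_1,\dots,\sigma_w\cc_t$ are still in general position.
\end{itemize}
So \cite[Thm.~7.12]{Vojta2011}, applied over $k''$ with the places above $S$ and the family $\{\sigma_w\cc_j\}_j$ at $w$, gives exactly $[k'':k]$ times the desired inequality. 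The $k$-points of the exceptional proper subspaces of $\P^n_{k''}$ lie in finitely many proper $k$-rational subspaces. Alternatively, you can cite the $S$-adic theorem with algebraic coefficients directly, as the paper does.

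The statement is indeed independent of the chosen extensions. But this holds because the theorem is true for every choice, not because the resulting Weil functions differ by bounded amounts.
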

	
	For hyperplanes defined over $k$ this is \cite[Thm.~7.11]{Vojta2011}; for algebraic coefficients see \cite{Schmidt1980,Schmidt1991}, the remarks after \cite[Thm.~7.11]{Vojta2011}, and \cite[Ch.~7]{BG2006}.
	
	\begin{thm}[Pullback of the subspace theorem]\label{thm:wsubspace}
		With $S$, $\cc_1,\dots,\cc_t$, $\epsilon$, $c$ as in Theorem~\ref{thm:schmidt}, there are proper linear subspaces $W_1,\dots,W_u\subset\P^n_k$ such that
		\[
		\sum_{j=1}^t\sum_{v\in S}\zeta_{\cc_j,v}(P)\le(n+1+\epsilon)\,L\,\lwh_k(P)+c
		\]
		for every $P\in\P^n_{\w}(k)\setminus\bigcup_jH_{\cc_j}$ with $\Phi(P)\notin W_1\cup\dots\cup W_u$. The exceptional points lie on finitely many diagonal hypersurfaces $H_{\cc}$, $\cc\in k^{n+1}\setminus\{0\}$.
	\end{thm}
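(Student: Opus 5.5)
The plan is to pull back Theorem~\ref{thm:schmidt} along the weighted Veronese map $\Phi$, in the same way that Theorem~\ref{thm:wroth} was obtained from Theorem~\ref{thm:roth}. Apply Theorem~\ref{thm:schmidt} with the given $S$, $\cc_1,\dots,\cc_t$, $\epsilon$ and $c$. This produces proper linear subspaces $W_1,\dots,W_u\subset\P^n_k$ such that
\[
\sum_{j=1}^t\sum_{v\in S}\lambda_{\cc_j,v}(y)\le(n+1+\epsilon)\,h_k(y)+c
\]
for all $y\in\P^n(k)$ outside $W_1\cup\dots\cup W_u$. Implicitly, $y$ also lies off the hyperplanes $\Lambda_{\cc_j}$, where the Weil functions are infinite.

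Now let $P\in\P^n_{\w}(k)\setminus\bigcup_jH_{\cc_j}$ with $\Phi(P)\notin\bigcup_iW_i$, and set $y=\Phi(P)$. Since $H_{\cc_j}=\Phi^{-1}(\Lambda_{\cc_j})$, the point $y$ is not on any $\Lambda_{\cc_j}$. Lemma~\ref{lem:weilcompare} gives $\zeta_{\cc_j,v}(P)=\lambda_{\cc_j,v}(y)$ place by place. Lemma~\ref{lem:veronese} gives $h_k(y)=L\,\lwh_k(P)$. Substituting these two identities into the displayed inequality gives the asserted bound verbatim, with the same constant $c$.

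It remains to describe the exceptional set. Each $W_i$ is a proper linear subspace, so it is contained in some hyperplane $\Lambda_{\cc}$ with $\cc\in k^{n+1}\setminus\{0\}$. These coefficients can be taken in $k$ because $W_i$ is defined over $k$; if one prefers, one can choose any $k$-rational linear form vanishing on the $k$-points of $W_i$, since only points $\Phi(P)\in\P^n(k)$ matter. Then $\Phi(P)\in W_i$ implies $\Phi(P)\in\Lambda_{\cc}$, that is, $P\in\Phi^{-1}(\Lambda_{\cc})=H_{\cc}$ by the identity $F_{\cc}(\x)=\ell_{\cc}(x_0^{r_0},\dots,x_n^{r_n})$. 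So the exceptional $P$ lie on the $u$ diagonal hypersurfaces obtained this way.

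No step here is a real obstacle; the argument is a formal transfer through Lemmas~\ref{lem:veronese} and~\ref{lem:weilcompare}. The only point needing care is rationality: the $W_i$ in Schmidt's theorem must be defined over $k$, so that the hypersurfaces $H_{\cc}$ have $\cc\in k^{n+1}$. If the cited form only gives subspaces over $\bQ$, I would replace each such subspace by the $k$-linear span of its $k$-rational points. That span is still proper, or else $W_i$ has no relevant points and can be dropped, and it contains every exceptional $\Phi(P)$.
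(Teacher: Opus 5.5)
Your proposal is correct and follows the paper's proof: apply Theorem~\ref{thm:schmidt} to $y=\Phi(P)$, transfer Weil functions and heights via Lemmas~\ref{lem:weilcompare} and~\ref{lem:veronese}, and place each exceptional subspace in some $\Lambda_{\cc}$, whose preimage is $\Phi^{-1}(\Lambda_{\cc})=H_{\cc}$. Your extra remark on the $k$-rationality of the $W_i$ is a harmless refinement of a point the paper leaves implicit.
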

	
	\begin{proof}
		Apply Theorem~\ref{thm:schmidt} to $y=\Phi(P)$, using Lemmas~\ref{lem:weilcompare} and~\ref{lem:veronese}; every proper linear subspace lies in some $\Lambda_{\cc}$, and $\Phi^{-1}(\Lambda_{\cc})=H_{\cc}$.
	\end{proof}
	
	When $\cc_j\in k^{n+1}$, Proposition~\ref{prop:fmt} turns the conclusion into $\sum_jN_S(\cc_j,P)\ge(t-n-1-\epsilon)L\,\lwh_k(P)-c'$.
	
	\begin{cor}\label{cor:product}
		Let $\cc_1,\dots,\cc_t\in\bQ^{n+1}$ be in general position and $\epsilon>0$. There are $C>0$ and proper linear subspaces $W_1,\dots,W_u$ of $\P^n_{\Q}$ such that
		\[
		\prod_{j=1}^t|F_{\cc_j}(\x)|\ge C\,|\x|_{\w}^{L(t-n-1-\epsilon)}
		\]
		for every $\w$-reduced $\x\in\Z^{n+1}$ with $\Phi([\x]_{\w})\notin\bigcup_iW_i$ and $\prod_jF_{\cc_j}(\x)\neq0$.
	\end{cor}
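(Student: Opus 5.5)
The plan is to deduce Corollary~\ref{cor:product} from Theorem~\ref{thm:wsubspace} with $k=\Q$ and $S=\{\infty\}$, in the same way Corollary~\ref{cor:rothQ} was deduced from Theorem~\ref{thm:wroth}. The subspaces $W_1,\dots,W_u$ will be those given by Theorem~\ref{thm:wsubspace} for these data, with $c=0$. Put $P=[\x]_{\w}$ for a $\w$-reduced $\x\in\Z^{n+1}$ with $\prod_jF_{\cc_j}(\x)\neq0$ and $\Phi(P)\notin\bigcup_iW_i$; then $P\notin\bigcup_jH_{\cc_j}$. By the definition of $\zeta_{\cc,\infty}$,
\[
\sum_{j=1}^t\zeta_{\cc_j,\infty}(P)=-\sum_{j=1}^t\log|F_{\cc_j}(\x)|+t\,L\log|\x|_{\w}+\sum_{j=1}^t\log\nm{\cc_j}_\infty .
\]
Here $\nm{\x}_{\infty,\w}=|\x|_{\w}$ for the chosen integral representative. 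The last sum is a constant depending only on the $\cc_j$.

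Combining this identity with Theorem~\ref{thm:wsubspace} gives
\[
\sum_j\log|F_{\cc_j}(\x)|\ge tL\log|\x|_{\w}-(n+1+\epsilon)L\,\lwh(P)-O(1).
\]
To finish, I need an upper bound for $\lwh(P)$ in terms of $\log|\x|_{\w}$. Lemma~\ref{lem:integral} supplies exactly this: $\lwh(P)\le\log|\x|_{\w}$ for every integral representative, since the $\mu_p(\x)$ are nonnegative. Because the coefficient $-(n+1+\epsilon)L$ in front of $\lwh(P)$ is negative, this substitution goes in the right direction. It yields $\prod_j|F_{\cc_j}(\x)|\ge C|\x|_{\w}^{L(t-n-1-\epsilon)}$ with $C=\prod_j\nm{\cc_j}_\infty^{-1}$ and $c=0$, where I use that $\log\nm{\cc_j}_\infty$ enters with the appropriate sign.

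The only step needing care is the use of the archimedean place alone. Theorem~\ref{thm:schmidt} requires $S$ to contain the archimedean places, and $S=\{\infty\}$ satisfies this over $\Q$, so there is no real obstacle. Algebraic coefficients $\cc_j\in\bQ^{n+1}$ are allowed because Theorem~\ref{thm:schmidt} permits them; $\nm{\cc_j}_\infty$ is then computed via the fixed embedding $\bQ\subset\CC$, and $F_{\cc_j}(\x)$ is a nonzero complex number.

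Unlike Corollary~\ref{cor:rothQ}, no finite exceptional set has to be absorbed into the constant. All exceptions in Theorem~\ref{thm:wsubspace} are already contained in $\bigcup_iW_i$, which is excluded by hypothesis.
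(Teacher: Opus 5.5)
Your proposal is correct and is essentially the paper's proof: you apply Theorem~\ref{thm:wsubspace} with $k=\Q$, $S=\{\infty\}$, $c=0$, expand $\zeta_{\cc_j,\infty}([\x]_{\w})=-\log|F_{\cc_j}(\x)|+L\log|\x|_{\w}+\log\nm{\cc_j}_\infty$, and use $\lwh([\x]_{\w})\le\log|\x|_{\w}$ from Lemma~\ref{lem:integral}. One small slip: $\log\nm{\cc_j}_\infty$ enters $\zeta_{\cc_j,\infty}$ with a plus sign, so your own computation gives $C=\prod_j\nm{\cc_j}_\infty$ (as in the paper) rather than its inverse; this is harmless because the corollary only asserts the existence of some $C>0$.
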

	
	\begin{proof}
		Take $k=\Q$, $S=\{\infty\}$, $c=0$ in Theorem~\ref{thm:wsubspace}. Since
		\[
		\zeta_{\cc_j,\infty}([\x]_{\w})=-\log|F_{\cc_j}(\x)|+L\log|\x|_{\w}+\log\nm{\cc_j}_\infty
		\]
		and $\lwh([\x]_{\w})\le\log|\x|_{\w}$ (Lemma~\ref{lem:integral}), the claim holds with $C=\prod_j\nm{\cc_j}_\infty$.
	\end{proof}
	
	\begin{rem}\label{rem:subspace}
		For $t=n+1$ Corollary~\ref{cor:product} reads $\prod_j|F_{\cc_j}(\x)|\ge C|\x|_{\w}^{-\epsilon L}$. As in the classical case it has content only when some $\cc_j$ has irrational coordinates; for rational $\cc_j$, Theorem~\ref{thm:wsubspace} is substantial once $S$ contains finite places. The version of Schmidt's theorem with hyperplanes depending on $v$ \cite[Thm.~7.12]{Vojta2011} transfers in the same way. For general weighted hypersurfaces one must embed $\P^n_{\w}$ by a multiple of $\cO(L)$ and use the methods of Corvaja--Zannier~\cite{CZ2004} and Evertse--Ferretti~\cite{EF2008}; see \cite[\S\S20--21]{Vojta2011}.
	\end{rem}
	
	\section{Weighted binary forms and Thue equations}\label{sec:thue}
	
	The classical finiteness theory of binary form equations begins with Thue's theorem \cite{Thue1909}. Here we record the consequence of Roth's theorem that is most natural for weighted homogeneous binary forms. In this section $\w=(q_0,q_1)$ with $\gcd(q_0,q_1)=1$. A polynomial $F\in\bQ[X,Y]$ is \emph{weighted homogeneous of weighted degree $D$} if $q_0i+q_1j=D$ for every monomial $X^iY^j$ of $F$.
	
	\begin{lem}\label{lem:structure}
		Every nonzero weighted homogeneous $F$ of weighted degree $D$ can be written uniquely as
		\[
		F(X,Y)=X^aY^b\,G(X^{q_1},Y^{q_0})
		\]
		with $0\le a<q_1$, $0\le b<q_0$ and $G\in\bQ[U,V]$ a binary form of degree $\kappa\ge0$; moreover $D=aq_0+bq_1+\kappa L$.
	\end{lem}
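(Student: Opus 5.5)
The plan is to work monomial by monomial and use the coprimality $\gcd(q_0,q_1)=1$ to pin down the exponents modulo $q_1$ and modulo $q_0$. Let $X^iY^j$ be any monomial of $F$, so that $q_0i+q_1j=D$. Reducing this equation modulo $q_1$ gives $q_0i\equiv D\pmod{q_1}$. Since $q_0$ is invertible modulo $q_1$, the residue $i\bmod q_1$ is the same for every monomial of $F$. Define $a\in\{0,\dots,q_1-1\}$ to be this common residue. In the same way, reducing modulo $q_0$ shows that $j\bmod q_0$ is constant, and we let $b\in\{0,\dots,q_0-1\}$ be that residue.

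For each monomial we can now write $i=a+q_1s$ and $j=b+q_0t$ with integers $s,t\ge0$; nonnegativity holds because $i\ge a$ and $j\ge b$. Substituting into $q_0i+q_1j=D$ gives
\[
q_0a+q_1b+L(s+t)=D,
\]
so $s+t=\kappa:=(D-aq_0-bq_1)/L$. The key point is that $\kappa$ is an integer independent of the monomial, and it is $\ge0$ because $s,t\ge0$. We then have $X^iY^j=X^aY^b(X^{q_1})^s(Y^{q_0})^t$. Collecting the coefficients, set $G(U,V)=\sum c_{ij}U^sV^t$, where the sum runs over the monomials of $F$. Every term has $s+t=\kappa$, so $G$ is a binary form of degree $\kappa$, and $F=X^aY^bG(X^{q_1},Y^{q_0})$. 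The identity $D=aq_0+bq_1+\kappa L$ is exactly the definition of $\kappa$.

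For uniqueness, suppose $F=X^{a'}Y^{b'}G'(X^{q_1},Y^{q_0})$ is another such representation. Every monomial on the right-hand side has $X$-exponent congruent to $a'$ modulo $q_1$. Since $F\neq0$ has at least one monomial, this forces $a'=a$, because both lie in $[0,q_1)$. The same argument gives $b'=b$. Cancelling $X^aY^b$ leaves $G(X^{q_1},Y^{q_0})=G'(X^{q_1},Y^{q_0})$. The substitution $U\mapsto X^{q_1}$, $V\mapsto Y^{q_0}$ is injective on polynomials, since distinct monomials $U^sV^t$ map to distinct monomials, so $G=G'$; the degree $\kappa$ is then determined by $D$.

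There is no real obstacle in this argument. The only point requiring care is the observation that the residues of $i$ and $j$ are constant across monomials, and this is exactly where $\gcd(q_0,q_1)=1$ is used.
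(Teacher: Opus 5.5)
Your proof is correct and follows the same route as the paper: coprimality fixes the residues of the exponents modulo $q_1$ and modulo $q_0$, and writing $i=a+q_1s$, $j=b+q_0t$ then forces $s+t=\kappa$ to be the same for every monomial. You also prove uniqueness explicitly, using the residues and the injectivity of $U\mapsto X^{q_1}$, $V\mapsto Y^{q_0}$; the paper leaves that part implicit.
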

	
	\begin{proof}
		If $q_0i+q_1j=D$ then $i\equiv Dq_0^{-1}\pmod{q_1}$ and $j\equiv Dq_1^{-1}\pmod{q_0}$, so $a,b$ depend only on $D$. Writing $i=a+q_1s$ and $j=b+q_0s'$, we get $s+s'=(D-aq_0-bq_1)/L=:\kappa$ for every monomial.
	\end{proof}
	
	\begin{lem}[Thue--Siegel--Roth]\label{lem:thueroth}
		Let $G\in\bQ[U,V]$ be a binary form of degree $\kappa\ge0$ without repeated factors, and let $\epsilon>0$. There is $C>0$ with $|G(u,v)|\ge C\max\{|u|,|v|\}^{\kappa-2-\epsilon}$ for all coprime $(u,v)\in\Z^2$ with $G(u,v)\neq0$.
	\end{lem}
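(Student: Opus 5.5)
We will derive the bound from Roth's theorem in the form of Theorem~\ref{thm:roth}, using the distance-factorization already carried out in the proof of Lemma~\ref{lem:deduction}. First we dispose of degenerate cases. If $\kappa\le 2$ the exponent $\kappa-2-\epsilon$ is negative. Since $G$ has algebraic coefficients, we may scale $G$ to have coefficients in $\Z$; this only changes $C$. For nonzero values $|G(u,v)|$ then lies in the nonzero values of an integer-coefficient form, so $|G(u,v)|\ge 1\ge \max\{|u|,|v|\}^{\kappa-2-\epsilon}$. Hence we may assume $\kappa\ge3$, or simply treat all $\kappa$ uniformly by the argument below.

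For the main case, fix a number field $k$ containing the coefficients of $G$ and factor $G=g_0\prod_{\beta\in Z(G)}(\beta_1U-\beta_0V)$ over $\bQ$. Because $G$ has no repeated factors, the points $\beta$ are distinct. Put $y=[u:v]$ with $(u,v)$ coprime and $M=\max\{|u|,|v|\}=H(y)$. Then
\[
|G(u,v)|=C_G\,M^{\kappa}\prod_{\beta\in Z(G)}\delta_\infty(y,\beta),
\]
with $C_G>0$ independent of $y$. This is exactly the archimedean part of the identity in the proof of Lemma~\ref{lem:deduction}.

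The key observation is that, since the $\beta$ are distinct, at most one factor $\delta_\infty(y,\beta)$ can be small. More precisely, if $\eta=\min_{\beta\neq\beta'}\delta_\infty(\beta,\beta')/2>0$, the triangle inequality for the chordal metric shows that $\delta_\infty(y,\beta)<\eta$ for at most one $\beta$. All other factors are then bounded below by $\eta$. So
\[
|G(u,v)|\gg M^\kappa\min_\beta\delta_\infty(y,\beta).
\]
Next we apply Theorem~\ref{thm:roth} with $k=\Q$, $S=\{\infty\}$, and each single target $\beta\in Z(G)$ separately. This gives $-\log\delta_\infty(y,\beta)\le(2+\epsilon)\log M+O(1)$ for all $y$ outside a finite set. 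Combining, we get $|G(u,v)|\gg M^{\kappa-2-\epsilon}$ off that finite set.

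The finitely many exceptional $y$ correspond to finitely many coprime pairs $(u,v)$, up to sign. At each of these $G(u,v)\neq0$ by assumption, so they can be absorbed into $C$. The only point needing care is the triangle inequality for $\delta_\infty$ on $\P^1(\CC)$, which holds for the chordal metric. Alternatively we could use the cruder bound $\prod_\beta\delta_\infty(y,\beta)\ge c\min_\beta\delta_\infty(y,\beta)^{\kappa}$; but that would lose the exponent, so the separation argument above is the step to get right. Rational zeros $\beta$ need no special treatment, since Theorem~\ref{thm:roth} allows $\beta\in\P^1(\bQ)$ and excludes only finitely many $y$, including $y=\beta$.
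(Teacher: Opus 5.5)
Your main argument is correct and is essentially the paper's proof. You factor $G$ into distinct linear forms and write $|G(u,v)|=C_G M^\kappa\prod_\beta\delta_\infty(y,\beta)$. You then use separation of the roots so that at most one factor can be small; the paper gets $\eta$ from compactness of $\P^1(\CC)$, you get it from the triangle inequality, which does hold for the max-norm distance $\delta_\infty$ at the real place. Finally you apply Theorem~\ref{thm:roth} root by root and absorb the finitely many exceptional pairs $\pm(u,v)$.

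The one thing to remove is your preliminary reduction for $\kappa\le2$. Here $\bQ$ is the field of algebraic numbers, so $G$ cannot in general be rescaled into $\Z[U,V]$, and the bound $|G(u,v)|\ge1$ is false. For instance, with $G=U-\sqrt2\,V$ the nonzero values $|u-\sqrt2\,v|$ become arbitrarily small.

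This does not damage the proof. Your uniform argument already covers every $\kappa\ge1$, with Roth supplying the needed lower bound even when $\kappa-2-\epsilon$ is negative. For $\kappa=0$ the form is a nonzero constant, and the inequality is trivial because $\max\{|u|,|v|\}\ge1$.
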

	
	\begin{proof}
		Let $\kappa\ge1$ and $G=g\prod_{i=1}^\kappa(\theta_{i1}U-\theta_{i0}V)$ with distinct $\theta_i\in\P^1(\bQ)$. For coprime $(u,v)$ and $y=[u:v]$ we have $h(y)=\log\max\{|u|,|v|\}$ and
		\[
		-\log|G(u,v)|=\sum_{i=1}^\kappa\lambda_{\theta_i,\infty}(y)-\kappa\,h(y)+c_G.
		\]
		For $i\neq j$ the function $y\mapsto\max\{\delta_\infty(y,\theta_i),\delta_\infty(y,\theta_j)\}$ is continuous and positive on the compact space $\P^1(\CC)$. Hence there is $\eta>0$ such that at most one $i$ has $\delta_\infty(y,\theta_i)<\eta$, and so $\sum_i\lambda_{\theta_i,\infty}(y)\le\max_i\lambda_{\theta_i,\infty}(y)+(\kappa-1)\log\eta^{-1}$. By Theorem~\ref{thm:roth}, applied to each $\theta_i$, $\max_i\lambda_{\theta_i,\infty}(y)\le(2+\epsilon)h(y)$ outside a finite set of $y$. Each exceptional $y$ has only the representatives $\pm(u,v)$, and there $G(u,v)\neq0$, so the constant can be adjusted.
	\end{proof}
	
	For $(x,y)\in\Z^2$ we write $|(x,y)|_{\w}=\max\{|x|^{1/q_0},|y|^{1/q_1}\}$.
	
	\begin{thm}[Weighted Thue inequality]\label{thm:wthue}
		Let $F\in\bQ[X,Y]$ be weighted homogeneous, written as in Lemma~\ref{lem:structure} with $G$ free of repeated factors. For every $\epsilon>0$ there is $C>0$ such that
		\[
		|F(x,y)|\ge C\,|(x,y)|_{\w}^{L(\kappa-2-\epsilon)}
		\]
		for all coprime $(x,y)\in\Z^2$ with $F(x,y)\neq0$.
	\end{thm}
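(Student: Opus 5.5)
Write $F(x,y)=x^ay^b\,G(u,v)$ with $(u,v)=(x^{q_1},y^{q_0})$, as in Lemma~\ref{lem:structure}. The plan is to apply Lemma~\ref{lem:thueroth} to $G$ at the point $(u,v)$ and to bound the monomial factor $x^ay^b$ from below trivially.

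Let $(x,y)$ be coprime with $F(x,y)\neq0$. Then $xy\neq0$ unless $a=0$ (respectively $b=0$). In any case $|x^ay^b|\ge1$, because the factors are nonzero integers whenever they occur. Also $(u,v)$ is coprime, since $\gcd(x,y)=1$, and $G(u,v)\neq0$. If $\kappa=0$, then $G$ is a nonzero constant and $|F(x,y)|\ge|G|$; the bound holds because the exponent $L(-2-\epsilon)$ is negative and $|(x,y)|_{\w}\ge1$. If $\kappa\ge1$, then $G$ has coefficients in $\bQ$ but possibly not in $\Z$, so first multiply by a common denominator. This changes only the constant, and Lemma~\ref{lem:thueroth} applies to $G$ over $\bQ$ as stated in any case. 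It gives
\[
|G(u,v)|\ge C\max\{|u|,|v|\}^{\kappa-2-\epsilon}.
\]

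Now $\max\{|u|,|v|\}=\max\{|x|^{q_1},|y|^{q_0}\}=|(x,y)|_{\w}^{L}$. The one point needing care is the sign of the exponent $\kappa-2-\epsilon$. When $\kappa\ge3$ (and $\epsilon<1$) it is positive, and we get $|G(u,v)|\ge C|(x,y)|_{\w}^{L(\kappa-2-\epsilon)}$ directly. When the exponent is negative, the displayed bound is still exactly an inequality of the form $C\,|(x,y)|_{\w}^{L(\kappa-2-\epsilon)}$, since we substitute an equality for $\max\{|u|,|v|\}$ and no monotonicity argument is needed. So in all cases
\[
|F(x,y)|\ge|x^ay^b|\,|G(u,v)|\ge C\,|(x,y)|_{\w}^{L(\kappa-2-\epsilon)}.
\]

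The main obstacle is essentially bookkeeping. One must check that coprimality of $(x,y)$ transfers to $(u,v)$, and that the exceptional finite sets in Lemma~\ref{lem:thueroth} come from only finitely many $(x,y)$. The second point holds because $(u,v)$ determines $(x,y)$ up to sign, by Lemma~\ref{lem:integralpoints}(b). Both are immediate.
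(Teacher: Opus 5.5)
Your proof is correct and is essentially the paper's argument: substitute $(u,v)=(x^{q_1},y^{q_0})$, use $\max\{|u|,|v|\}=|(x,y)|_{\w}^L$ and $|x^ay^b|\ge1$, and apply Lemma~\ref{lem:thueroth}. The differences are cosmetic: the paper simply absorbs the four pairs with $xy=0$ into $C$. Your remarks about clearing denominators and about exceptional sets are unnecessary, since Lemma~\ref{lem:thueroth} is already stated for $G\in\bQ[U,V]$ and for all coprime $(u,v)$ with $G(u,v)\neq0$.
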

	
	\begin{proof}
		The pairs with $xy=0$ are $(\pm1,0),(0,\pm1)$ and are absorbed into $C$. For $xy\neq0$ put $u=x^{q_1}$, $v=y^{q_0}$. These are coprime, $\max\{|u|,|v|\}=|(x,y)|_{\w}^L$, and $|F(x,y)|\ge|G(u,v)|$ since $|x^ay^b|\ge1$; apply Lemma~\ref{lem:thueroth}.
	\end{proof}
	
	For $F\in\bQ[X^{q_1},Y^{q_0}]$ we have $\kappa L=D$, and the bound reads
	\[
	|F(x,y)|\ge C\,|(x,y)|_{\w}^{D-(2+\epsilon)L},
	\]
	the weighted form of $|F(x,y)|\gg\max\{|x|,|y|\}^{D-2-\epsilon}$.
	
	\begin{cor}[Weighted Thue equations]\label{cor:wthueeq}
		If $\kappa\ge3$ in Theorem~\ref{thm:wthue} and $m\in\CC^*$, then $F(x,y)=m$ has only finitely many solutions $(x,y)\in\Z^2$.
	\end{cor}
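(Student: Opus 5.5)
The plan is to reduce to coprime solutions, where Theorem~\ref{thm:wthue} applies directly, and then handle the non-coprime ones through the weighted homogeneity of $F$. Note first that if $m\notin\bQ$ and the coefficients of $F$ lie in a number field $k$, then $F(x,y)\in k$ for every integral $(x,y)$, so only finitely many fields matter; in any case solutions are integer pairs with $F(x,y)=m$. Since $\kappa\ge3$, the exponent $L(\kappa-2-\epsilon)$ is positive once $\epsilon<1$, say $\epsilon=1/2$.

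\emph{Step 1: the coprime case.} For coprime $(x,y)$ with $F(x,y)=m\neq0$, Theorem~\ref{thm:wthue} gives $|m|\ge C\,|(x,y)|_{\w}^{L(\kappa-2-\epsilon)}$. Hence $|(x,y)|_{\w}$ is bounded, which bounds $|x|$ and $|y|$, so there are only finitely many such pairs.

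\emph{Step 2: the general case.} Let $(x,y)\neq(0,0)$ be an integer solution, and put $g=\gcd(x,y)$, $x=gx'$, $y=gy'$. The difficulty is that $F$ is weighted homogeneous rather than homogeneous, so $F(gx',gy')$ is not simply $g^{D}F(x',y')$; substituting $g$ into both coordinates is not the weighted action. I would therefore work prime by prime instead of dividing by $g$. Write the $\w$-reduced decomposition $(x,y)=\lambda\ast(x',y')$ with $\lambda\in\Z_{>0}$ maximal such that $\lambda^{q_0}\mid x$ and $\lambda^{q_1}\mid y$. Then $F(x,y)=\lambda^{D}F(x',y')$, which forces $\lambda^{D}\mid$ (denominator-cleared) $m$, so $\lambda$ takes only finitely many values. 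It remains to treat $\w$-reduced but non-coprime $(x',y')$.

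\emph{Step 3: the obstacle.} For a $\w$-reduced pair with common prime $p$, I would use $u=x'^{q_1}$ and $v=y'^{q_0}$ and, rather than Theorem~\ref{thm:wthue}, apply Lemma~\ref{lem:thueroth} to $G(u,v)/\gcd(u,v)^{\kappa}$. Here $|x'^ay'^b|\ge1$, and $\gcd(u,v)$ is controlled by Lemma~\ref{lem:integral}: it is at most $\rad(\gcd(x',y'))^{L}$ up to bounded powers. This step is the main concern, because the gcd loss could in principle cancel the gain. If it does, I would fall back on the classical route. For $xy\neq0$, the equation $x^ay^bG(x^{q_1},y^{q_0})=m$ forces $x^a$, $y^b$, and hence $G(x^{q_1},y^{q_0})$, to take finitely many values up to divisors of $m$. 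Then $G(U,V)=m'$ is a Thue equation of degree $\kappa\ge3$ in arbitrary integers $(U,V)$. Its finiteness follows from Roth (Thue's theorem, applied to all integer pairs, not just coprime ones) via the factorization in the proof of Lemma~\ref{lem:thueroth}. That route gives finitely many $(u,v)=(x^{q_1},y^{q_0})$ and hence finitely many $(x,y)$.

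The solutions with $xy=0$ reduce to $c\,x^{\cdot}=m$ or $c\,y^{\cdot}=m$ and are finite whenever the relevant monomial is nonconstant. If a coefficient vanishes there, there are no solutions, since $m\neq0$. The main obstacle is Step 3, the passage from coprime to arbitrary integral solutions. I expect the cleanest resolution to be applying Thue's theorem for $G$ directly to all integer pairs $(u,v)$.
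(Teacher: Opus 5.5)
Your fallback in Step 3 is correct and is essentially the paper's proof: reduce to the Thue--Roth bound for the ordinary binary form $G$ at arbitrary integer pairs $(u,v)=(x^{q_1},y^{q_0})$. The paper writes $(u,v)=e(u',v')$ with $e=\gcd(u,v)$ and gets $|m|\ge e^\kappa|G(u',v')|\ge C\,e^\kappa\max\{|u'|,|v'|\}^{\kappa-5/2}$ from $|x^ay^b|\ge1$ and Lemma~\ref{lem:thueroth}, which bounds $e$, $u'$ and $v'$ at once.

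Your worry that the gcd might cancel the gain is unfounded, because ordinary homogeneity of $G$ puts $e^\kappa$ on the favourable side. So your first option in Step 3 already works for every solution with $xy\neq0$, and the $\w$-reduction of Step 2 and the appeal to Lemma~\ref{lem:integral} are unnecessary.
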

	
	\begin{proof}
		If $x=0$ and $a>0$, then $F(0,y)=0$ and there is no solution because $m\ne0$; if $a=0$, then $F(0,y)$ is a nonzero constant multiple of a power of $y$, so only finitely many $y$ occur. The case $y=0$ is identical, with $b$ in place of $a$. Let $xy\neq0$, put $u=x^{q_1}$, $v=y^{q_0}$, $e=\gcd(u,v)$ and $(u,v)=e(u',v')$. Then $|m|\ge e^\kappa|G(u',v')|$ with $G(u',v')\neq0$. Lemma~\ref{lem:thueroth} with $\epsilon=1/2$ gives $e^\kappa\max\{|u'|,|v'|\}^{\kappa-5/2}\le|m|/C$, which bounds $e$, $|u'|$, $|v'|$, and therefore $|x|$ and $|y|$.
	\end{proof}
	
	\begin{rem}\label{rem:sharpthue}
		\begin{enumerate}[(a),leftmargin=*]
			\item The hypothesis $\kappa\ge3$ cannot be weakened to $\kappa\ge2$ in general. For $\w=(1,1)$ the Pell equation $u^2-2v^2=1$ has $\kappa=2$ and infinitely many solutions. For $\w=(1,2)$ the form $X^2-Y$ has $\kappa=1$, and $x^2-y=1$ has the coprime solutions $(x,x^2-1)$.
			\item For coprime $p,q\ge2$ and $\w=(q,p)$, the form $X^p-Y^q$ has $\kappa=1$, so Theorem~\ref{thm:wthue} alone gives no finiteness. Finiteness of $x^p-y^q=m$ follows from Siegel's theorem \cite{Siegel1929}, \cite[\S D.9]{HS2000}, as the curve has genus $(p-1)(q-1)/2\ge1$.
		\end{enumerate}
	\end{rem}
	
	Under $abc$ the exponent in Theorem~\ref{thm:wthue} improves, and finiteness extends below $\kappa=3$.
	
	\begin{cor}\label{cor:abcthue}
		Assume the $abc$ conjecture. Let $F$ be as in Theorem~\ref{thm:wthue}, and assume that $G(0,1)$ and $G(1,0)$ are nonzero. For every $\epsilon>0$,
		\[
		|F(x,y)|\gg_\epsilon|(x,y)|_{\w}^{\kappa L-q_0-q_1-\epsilon}
		\]
		for all coprime $(x,y)\in\Z^2$ with $F(x,y)\neq0$. In particular, if $\kappa L>q_0+q_1$, then $F(x,y)=m$ has only finitely many coprime solutions for every $m\neq0$.
	\end{cor}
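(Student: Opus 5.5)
The plan is to reduce to Theorem~\ref{thm:abcgeneral}(a) applied to $G$, after clearing denominators and passing from $F$ to $G$ exactly as in the proof of Theorem~\ref{thm:wthue}. Multiplying $F$ by a nonzero integer changes only the implied constant, so we may assume $F\in\Z[X,Y]$. Then, by Lemma~\ref{lem:structure}, $G\in\Z[U,V]$ after a further scaling if necessary. It is also squarefree, and $G(0,1)G(1,0)\neq0$ by hypothesis. The finitely many coprime pairs with $xy=0$ are $(\pm1,0),(0,\pm1)$, and they are absorbed into the constant. For $xy\neq0$ put $u=x^{q_1}$, $v=y^{q_0}$. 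These are coprime, $\max\{|u|,|v|\}=|(x,y)|_{\w}^L$, and $|F(x,y)|\ge|G(u,v)|\ge\rad(G(u,v))$ because $|x^ay^b|\ge1$ and $G(u,v)\neq0$.

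Next, apply Theorem~\ref{thm:abcgeneral}(a) (equivalently Conjecture~\ref{conj:radical}, proved there under $abc$) to $P=[x:y]_{\w}$, which is an integral point with coprime representative $(x,y)$. Here the roles of $x_0,x_1$ are played by $x,y$, and $|\x|_{\w}=|(x,y)|_{\w}$. This gives
\[
\rad\bigl(G(x^{q_1},y^{q_0})\bigr)\gg_\epsilon|(x,y)|_{\w}^{\kappa L-q_0-q_1-\epsilon},
\]
since $d=\kappa$. Combined with the first paragraph, this yields the stated inequality. If $\kappa=0$, then $G$ is constant and $G(0,1)G(1,0)\neq0$ still holds. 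In that case the claimed exponent $-q_0-q_1-\epsilon$ is negative, so the bound is trivial because $|F(x,y)|\ge1$ for integral $F$ at nonzero values. I will treat this as a separate trivial case rather than appeal to Conjecture~\ref{conj:radical}, which assumes $d\ge1$.

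For finiteness, suppose $\kappa L>q_0+q_1$ and choose $\epsilon$ with $\kappa L-q_0-q_1-\epsilon>0$. Then $|m|=|F(x,y)|\gg|(x,y)|_{\w}^{\text{positive}}$ bounds $|(x,y)|_{\w}$, and hence $|x|$ and $|y|$. So there are only finitely many coprime solutions.

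The only delicate point is the bookkeeping: matching the weight convention $|(x,y)|_{\w}=\max\{|x|^{1/q_0},|y|^{1/q_1}\}$ with the setup of Conjecture~\ref{conj:radical}, and making the normalization to integer coefficients explicit. Beyond that the argument is a direct substitution, and I do not expect a real obstacle.
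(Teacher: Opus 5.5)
There is a gap in your first step. In Theorem~\ref{thm:wthue} the form $F$ has coefficients in $\bQ$, the algebraic closure of $\Q$, not in $\Q$. So you cannot clear denominators to get $F\in\Z[X,Y]$. Take, for example, $G=U-\sqrt2\,V$. Then $G(u,v)$ is an irrational algebraic number, $\rad(G(u,v))$ is undefined, and Theorem~\ref{thm:abcgeneral}(a), which needs $G\in\Z[U,V]$, does not apply. The same problem affects your $\kappa=0$ case. There $|F(x,y)|\ge1$ should be replaced by $|F(x,y)|\ge|G|$, which still suffices. For $F\in\Q[X,Y]$ your argument is correct. It uses part (a) of Theorem~\ref{thm:abcgeneral}, whereas the paper uses part (b).

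The paper avoids radicals altogether. Let $\theta_1,\dots,\theta_\kappa$ be the algebraic zeros of $G$. The paper writes $-\log|G(u,v)|=\sum_i-\log\delta_\infty(\Phi(P),\theta_i)-\kappa L\,\lwh(P)+O(1)$. It then bounds the sum by Theorem~\ref{thm:abcgeneral}(b), with $S=\{\infty\}$ and $A_\infty$ consisting of one preimage of each $\theta_i$. This handles algebraic coefficients directly.

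If you want to keep your radical argument, a norm trick repairs it:
\begin{enumerate}[(a)]
\item Let $\tilde G\in\Z[U,V]$ be the product of the primitive irreducible integral forms vanishing on the Galois orbits of the zeros of $G$, and let $\tilde d\ge\kappa$ be its degree.
\item Over $\bQ$ one has $\tilde G=c\,G\,G'$ with $\deg G'=\tilde d-\kappa$.
\item A rational zero of $\tilde G$ is Galois-fixed, so it is a zero of $G$. Hence $\tilde G(0,1)\tilde G(1,0)\neq0$, and $\tilde G(u,v)\neq0$ whenever $G(u,v)\neq0$.
\item Apply Theorem~\ref{thm:abcgeneral}(a) to $\tilde G$. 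Combined with $\rad(\tilde G(u,v))\le|\tilde G(u,v)|\ll|G(u,v)|\,|(x,y)|_{\w}^{L(\tilde d-\kappa)}$, this gives $|G(u,v)|\gg|(x,y)|_{\w}^{\kappa L-q_0-q_1-\epsilon}$ after renaming $\epsilon$.
\end{enumerate}
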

	
	\begin{proof}
		The four pairs with $xy=0$ are absorbed into the constant. For $xy\neq0$, put $P=[x:y]_{\w}\in\P^1_{\w}(\Z)$ and $(u,v)=(x^{q_1},y^{q_0})$, and let $\theta_1,\dots,\theta_\kappa$ be the zeros of $G$, none of which is $[0:1]$ or $[1:0]$. As in the proof of Lemma~\ref{lem:thueroth}, $-\log|G(u,v)|=\sum_i-\log\delta_\infty(\Phi(P),\theta_i)-\kappa L\,\lwh(P)+O(1)$. Theorem~\ref{thm:abcgeneral}, with $S=\{\infty\}$ and $A_\infty$ consisting of one preimage of each $\theta_i$, bounds the sum by $(q_0+q_1+\epsilon)\lwh(P)+O(1)$. Since $|F(x,y)|\ge|G(u,v)|$ and $\lwh(P)=\log|(x,y)|_{\w}$, the inequality follows.
	\end{proof}
	
	For $\kappa=1$ the condition $\kappa L>q_0+q_1$ reads $\frac1{q_0}+\frac1{q_1}<1$, that is, $q_0,q_1\ge2$. For the forms $X^p-Y^q$ of Remark~\ref{rem:sharpthue}(b) this is exactly the case in which the curve $x^p-y^q=m$ has positive genus, in accordance with Siegel's theorem.
	
	\section{Failure of a naive weighted index rescaling}\label{sec:direct}
	
	For $P\in\Z[x_1,\dots,x_m]$ put $P_{\mathbf i}=(i_1!\cdots i_m!)^{-1}\partial^{i_1+\dots+i_m}P/\partial x_1^{i_1}\cdots\partial x_m^{i_m}$. Given weights $q_\ell$ and positive integers $d_\ell$, the natural weighted index of $P$ at $\mathbf z$ is
	\[
	t_{\w}(P;\mathbf z;\mathbf d)=\min\Bigl\{\sum_{\ell=1}^m\frac{q_\ell i_\ell}{d_\ell}:P_{\mathbf i}(\mathbf z)\neq0\Bigr\},
	\]
	which is the classical index of Roth's method \cite[\S D.3]{HS2000} with respect to the parameters $d_\ell/q_\ell$. If $q_\ell\mid d_\ell$ for all $\ell$, it is the usual index for the integers $d_\ell/q_\ell$, and the auxiliary polynomial and Roth's lemma are the classical ones. Without this divisibility, however, the most direct rescaling of the classical degree box can fail, as the following example shows. For $\mathbf z\in K^m$ put $\ind_{\mathbf z}(P)=\min\{i_1+\dots+i_m:P_{\mathbf i}(\mathbf z)\neq0\}$, and recall that $\ind_{\mathbf z}(A+B)=\min\{\ind_{\mathbf z}A,\ind_{\mathbf z}B\}$ when the two differ.
	
	\begin{lem}\label{lem:twopoints}
		Let $P\in K[x_1,\dots,x_m]$ be nonzero of degree at most $1$ in each variable, and let $\mathbf a,\mathbf b\in K^m$ with $a_\ell\neq b_\ell$ for all $\ell$. Then $\ind_{\mathbf a}(P)+\ind_{\mathbf b}(P)\le m$.
	\end{lem}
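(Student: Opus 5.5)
The plan is to reduce to $\mathbf a=\mathbf 0$, $\mathbf b=\mathbf 1$, then expand $P$ in a Lagrange-type basis adapted to both points at once, and read off one index bound at each point from a suitably chosen nonzero coefficient.

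\emph{Reduction.} Apply the coordinatewise affine substitution $x_\ell=a_\ell+(b_\ell-a_\ell)y_\ell$. This is allowed because $b_\ell-a_\ell\neq0$. The substitution preserves the property of having degree at most $1$ in each variable. It sends $\mathbf a$ to $\mathbf 0$ and $\mathbf b$ to $\mathbf 1=(1,\dots,1)$. It also preserves indices: by the chain rule, $P_{\mathbf i}$ is multiplied by $\prod_\ell(b_\ell-a_\ell)^{i_\ell}\neq0$ after the change of variables, so the conditions $P_{\mathbf i}(\mathbf z)\neq0$ are unchanged. So we may assume $\mathbf a=\mathbf 0$ and $\mathbf b=\mathbf 1$.

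\emph{Reading indices from coefficients.} For a multilinear $P=\sum_{T\subseteq[m]}e_T\,y^T$, with $y^T=\prod_{\ell\in T}y_\ell$, the derivative $P_{\mathbf i}(\mathbf 0)$ equals the coefficient $e_T$ when $\mathbf i$ is the indicator vector of $T$, and it vanishes for $\mathbf i\notin\{0,1\}^m$. Hence
\[
\ind_{\mathbf 0}(P)=\min\{|T|:e_T\neq0\}.
\]
The same holds at $\mathbf 1$ after expanding in the variables $w_\ell=1-y_\ell$. The sign change $y_\ell\mapsto 1-w_\ell$ does not affect indices, for the same reason as in the reduction step.

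\emph{Key step.} Expand $P$ in the basis
\[
B_A=y^A(1-y)^{[m]\setminus A},\qquad A\subseteq[m],
\]
writing $P=\sum_Ac_AB_A$. Evaluating at the vertex $\mathbf 1_A$ of the cube gives $c_A=P(\mathbf 1_A)$. Since $P\neq0$, the support $\mathcal A=\{A:c_A\neq0\}$ is nonempty.

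First choose $A_0\in\mathcal A$ of minimal cardinality. Expanding each $B_A$ in monomials $y^T$, only the sets $A\subseteq T$ contribute to $y^T$. So the coefficient of $y^{A_0}$ in $P$ is $\sum_{A\subseteq A_0}\pm c_A$. By minimality of $|A_0|$, every proper subset of $A_0$ lies outside $\mathcal A$, so this sum reduces to $c_{A_0}\neq0$. Therefore $\ind_{\mathbf 0}(P)\le|A_0|$.

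Symmetrically, choose $A_1\in\mathcal A$ of maximal cardinality. In the variables $w=1-y$ we have $B_A=(1-w)^Aw^{[m]\setminus A}$. Only sets $A$ with $[m]\setminus A\subseteq[m]\setminus A_1$, that is $A\supseteq A_1$, contribute to the monomial $w^{[m]\setminus A_1}$. By maximality of $|A_1|$, the only such set in $\mathcal A$ is $A_1$ itself, so this coefficient is $\pm c_{A_1}\neq0$. Therefore $\ind_{\mathbf 1}(P)\le m-|A_1|$.

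\emph{Conclusion.} Since $|A_0|\le|A_1|$, adding the two bounds gives
\[
\ind_{\mathbf 0}(P)+\ind_{\mathbf 1}(P)\le |A_0|+m-|A_1|\le m .
\]

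The main point is the choice of basis. Working with the $y$-monomials and the $w$-monomials separately does not couple the two indices. The basis $B_A$ does couple them, because each $B_A$ vanishes to order exactly $|A|$ at $\mathbf 0$ and exactly $m-|A|$ at $\mathbf 1$. The only care needed is to choose the extremal supports by cardinality, which rules out cancellation in the relevant coefficients.
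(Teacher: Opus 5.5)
Your proof is correct, but it takes a different route from the paper's.

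\textbf{The paper's route.} The paper argues by induction on $m$. It writes $P=(x_1-a_1)A+B=(x_1-b_1)A+B'$ with $B'=B+(b_1-a_1)A$, and uses $\ind_{\mathbf a}P=\min\{1+\ind_{\mathbf a'}A,\ind_{\mathbf a'}B\}$ together with the analogous formula at $\mathbf b$. The only critical case, $s+s'=m-1$, is ruled out by applying the rule $\ind(A+B)=\min\{\ind A,\ind B\}$ (valid when the two differ) to $B=B'-(b_1-a_1)A$.

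\textbf{Your route.} You normalize to $\mathbf 0,\mathbf 1$ and pass to the vertex basis $B_A$. Its transition matrices to the monomials in $y$ and in $w=1-y$ are unitriangular with respect to inclusion. This makes the argument non-inductive and case-free, and valid over any field.

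\textbf{What each buys.} Your route gives more than the lemma. The same triangularity shows that your two upper bounds are in fact equalities:
\[
\ind_{\mathbf a}(P)=\min\{|A|:c_A\neq0\},\qquad \ind_{\mathbf b}(P)=m-\max\{|A|:c_A\neq0\},
\]
where $c_A$ is the value of $P$ at the point with coordinates $b_\ell$ for $\ell\in A$ and $a_\ell$ otherwise. Hence $\ind_{\mathbf a}(P)+\ind_{\mathbf b}(P)=m-(\max-\min)$, and equality in the lemma holds exactly when the nonzero vertex values all lie on one level $|A|=\mathrm{const}$. The paper's route costs a small case analysis. In exchange it needs neither the coordinate change nor a basis, only the elementary behaviour of the index under multiplication by $x_1-a_1$ and under sums.

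\textbf{One implicit step.} You should state that the $B_A$ form a basis of the multilinear polynomials. This holds because they are products of the bases $\{y_\ell,1-y_\ell\}$ of the polynomials of degree at most $1$ in $y_\ell$. Alternatively, $B_A(\mathbf 1_{A'})=\delta_{AA'}$ gives linear independence, and a dimension count finishes.
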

	
	\begin{proof}
		Induction on $m$, the case $m=0$ being trivial. Write $P=(x_1-a_1)A+B=(x_1-b_1)A+B'$ with $A,B\in K[x_2,\dots,x_m]$ and $B'=B+(b_1-a_1)A$, and let $\mathbf a',\mathbf b'$ be $\mathbf a,\mathbf b$ without their first coordinates. Then $\ind_{\mathbf a}P=\min\{1+\ind_{\mathbf a'}A,\ind_{\mathbf a'}B\}$ and $\ind_{\mathbf b}P=\min\{1+\ind_{\mathbf b'}A,\ind_{\mathbf b'}B'\}$. If $A=0$ we conclude by induction. Otherwise put $s=\ind_{\mathbf a'}A$ and $s'=\ind_{\mathbf b'}A$, so $s+s'\le m-1$. The claim is clear unless $\ind_{\mathbf a}P=s+1$, $\ind_{\mathbf b}P=s'+1$ and $s+s'=m-1$. In that case $\ind_{\mathbf a'}B>s$ and $\ind_{\mathbf b'}B'>s'$. The latter and $B=B'-(b_1-a_1)A$ give $\ind_{\mathbf b'}B=s'$, so $B\neq0$, and induction yields $\ind_{\mathbf a'}B\le m-1-s'=s$, a contradiction.
	\end{proof}
	
	\begin{prop}\label{prop:index}
		Let $m=9$, $\alpha=\sqrt2$, $\epsilon=1/3$, $q_\ell=3$ and $d_\ell=5$ for all $\ell$, so that $m=[\Q(\alpha):\Q]/(2\epsilon^2)$. There is no nonzero $P\in\Z[x_1,\dots,x_9]$ with $\deg_{x_\ell}P\le\lfloor d_\ell/q_\ell\rfloor$ for all $\ell$ and $t_{\w}(P;(\alpha,\dots,\alpha);\mathbf d)\ge m(1-\epsilon)/2$.
	\end{prop}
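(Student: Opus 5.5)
The parameters make the constraints very concrete, so I will reduce everything to Lemma~\ref{lem:twopoints}. Since $\lfloor d_\ell/q_\ell\rfloor=\lfloor 5/3\rfloor=1$, any admissible $P$ has degree at most $1$ in each of the nine variables. The weighted index is $t_{\w}(P;\mathbf z;\mathbf d)=\min\{\tfrac35(i_1+\dots+i_9):P_{\mathbf i}(\mathbf z)\neq0\}=\tfrac35\ind_{\mathbf z}(P)$. The required lower bound is $m(1-\epsilon)/2=9\cdot\tfrac23\cdot\tfrac12=3$. So the hypothesis becomes $\ind_{\mathbf a}(P)\ge 5$ with $\mathbf a=(\sqrt2,\dots,\sqrt2)$, because $\tfrac35\ind\ge3$ forces $\ind\ge5$.

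Next I bring in Galois conjugation. Since $P$ has integer (hence rational) coefficients, each derivative $P_{\mathbf i}$ also has rational coefficients. Applying the nontrivial automorphism $\sigma$ of $\Q(\sqrt2)$ gives $\sigma(P_{\mathbf i}(\mathbf a))=P_{\mathbf i}(\mathbf b)$ with $\mathbf b=(-\sqrt2,\dots,-\sqrt2)$. Hence $P_{\mathbf i}(\mathbf a)=0$ if and only if $P_{\mathbf i}(\mathbf b)=0$, so $\ind_{\mathbf b}(P)=\ind_{\mathbf a}(P)\ge5$.

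The contradiction now follows from Lemma~\ref{lem:twopoints} with $K=\Q(\sqrt2)$. Its hypotheses hold: $P\neq0$, $P$ has degree at most $1$ in each variable, and $a_\ell=\sqrt2\neq-\sqrt2=b_\ell$ for every $\ell$. The lemma gives $\ind_{\mathbf a}(P)+\ind_{\mathbf b}(P)\le 9$. But the previous two steps give $\ind_{\mathbf a}(P)+\ind_{\mathbf b}(P)\ge10$, so no such $P$ exists. As a side check, $m=[\Q(\alpha):\Q]/(2\epsilon^2)=2/(2/9)=9$, which matches the stated identity.

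There is no serious obstacle. The only points needing care are that the degree bound really collapses to multilinearity ($\lfloor5/3\rfloor=1$), and that the weighted-index threshold $3$, measured in units of $\tfrac35$, becomes the integer bound $\ind\ge5$ rather than $\ind\ge4$. The latter is exactly what pushes the sum past $m$.
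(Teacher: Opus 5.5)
Your proof is correct and matches the paper's argument: $\lfloor 5/3\rfloor=1$ forces multilinearity, $t_{\w}=\tfrac35\ind_{\mathbf a}(P)\ge3$ forces $\ind_{\mathbf a}(P)\ge5$, and conjugation gives the same bound at $-\mathbf a$. Together these contradict Lemma~\ref{lem:twopoints}. Your extra remarks, that the divided derivatives $P_{\mathbf i}$ still have rational coefficients and that the threshold rounds up to $5$ rather than $4$, are exactly the details the paper leaves implicit.
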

	
	\begin{proof}
		Here $\lfloor d_\ell/q_\ell\rfloor=1$ and $m(1-\epsilon)/2=3$, and $t_{\w}=\frac35\ind_{\mathbf a}(P)$ with $\mathbf a=(\sqrt2,\dots,\sqrt2)$. The condition therefore forces $\ind_{\mathbf a}(P)\ge5$. As $P$ has rational coefficients, conjugation gives $\ind_{\mathbf b}(P)\ge5$ at $\mathbf b=-\mathbf a$, contradicting Lemma~\ref{lem:twopoints}.
	\end{proof}
	
	By contrast, the classical requirement $\ind_{\mathbf a}(P)\ge3$ under the same degree bounds imposes $92$ rational linear conditions on $512$ coefficients, so it can be met.
	
	\subsection*{Data availability}
	No data were generated or analysed in this work.

\end{document}